\newbox\mybox
\def\overtag#1#2#3{\setbox\mybox\hbox{$#1$}\hbox to
  0pt{\vbox to 0pt{\vglue-#3\vglue-\ht\mybox\hbox to \wd\mybox
      {\hss$\ss#2$\hss}\vss}\hss}\box\mybox}
\def\undertag#1#2#3{\setbox\mybox\hbox{$#1$}\hbox to 0pt{\vbox to
    0pt{\vglue#3\vglue\ht\mybox\hbox to \wd\mybox
      {\hss$\ss#2$\hss}\vss}\hss}\box\mybox}
\def\lefttag#1#2#3{\hbox to 0pt{\vbox to 0pt{\vglue -6pt\hbox to
      0pt{\hss$\ss#2$\hskip#3}\vss}}#1}
\def\righttag#1#2#3{\hbox to 0pt{\vbox to 0pt{\vglue -6pt\hbox to
      0pt{\hskip#3$\ss#2$\hss}\vss}}#1}
\let\ss\scriptstyle
\def\splicediag#1#2{\xymatrix@R=#1pt@C=#2pt@M=0pt@W=0pt@H=0pt}
\def\Dot{\lower.2pc\hbox to 2pt{\hss$\bullet$\hss}}
\def\Circ{\lower.2pc\hbox to 2pt{\hss$\circ$\hss}}
\def\Vdots{\raise5pt\hbox{$\vdots$}}
\newcommand\lineto{\ar@{-}}
\newcommand\dashto{\ar@{--}}
\newcommand\dotto{\ar@{.}}
\DeclareMathOperator{\length}{length}
\newtheorem*{theorem*}{Theorem}
\newtheorem{theorem}{Theorem}[section]
\newtheorem{proposition}[theorem]{Proposition}
\newtheorem{thm}{Theorem}[section]
\newtheorem{prop}[thm]{Proposition}
\newtheorem{lemm}[thm]{Lemma}
\newtheorem{coro}[thm]{Corollary}
\newtheorem{rema}[thm]{Remark}
\newtheorem{defi}[thm]{Definition}
\newtheorem{exam}[thm]{Example}
\newcommand{\red}[1]{{\color{red} #1}}
\title{Inner rates of finite morphisms}
\author{Yenni Cherik}
\address{Aix-Marseille Université, CNRS, Centrale Marseille, I2M, Marseille, France}
\email{\href{mailto:yenni.cherik@univ-amu.fr}{yenni.cherik@univ-amu.fr}}
\keywords{Complex Surface Singularities, Resolution of singularities, Polar Curves, Milnor Fibers}
\begin{document}


\subjclass[2010]{Primary 32S25, 57M27 Secondary 14B05; 32S55}

\maketitle

\begin{abstract}
Let $(X, 0)$ be a complex analytic surface germ embedded in $(\mathbb{C}^n,0)$ with an isolated singularity and $\Phi=(g,f):(X,0) \longrightarrow (\mathbb{C}^2,0)$ be a finite morphism. We define a family of analytic invariants of the morphism $\Phi$, called \textit{inner rates} of $\Phi$. By means of the inner rates we study the polar curve associated to the morphism $\Phi$ when fixing the topological data of the curve $(gf)^{-1}(0)$ and the surface germ $(X,0)$, allowing to address a problem called polar exploration. We also use the inner rates to study the geometry of the Milnor fibers of a non constant holomorphic function $f:(X,0) \longrightarrow (\mathbb{C},0)$.
The main result is a formula which involves the inner rates and the polar curve alongside topological invariants of the surface germ $(X,0)$ and the curve $(gf)^{-1}(0)$.  
\end{abstract}

\section*{Introduction}

Let $(X, 0)$ be a complex analytic surface germ with an isolated singularity and $\Phi = (g, f) : (X, 0) \longrightarrow (\mathbb{C}^2, 0)$ be a finite morphism. The polar curve of the morphism $\Phi$ is the curve $\Pi_{\Phi}$ defined as the topological closure of the ramification locus of $\Phi$. Polar curves play an important role in the study of the geometry and the topology of germs of singular complex varieties, see, e.g.\cite{Teissier1982,EGBT,PH,BNP,BFP}.

In this paper we introduce and study a family of analytic invariants of the morphism $\Phi$ that we call \textit{inner rates} of $\Phi$, generalizing the notion of inner rates of a complex analytic surface germ $(X,0)$ first introduced by Birbrair, Neumann and Pichon in \cite{BNP}  as metric invariants, and later defined in \cite{BFP} by Belotto, Fantini and Pichon. Our main result (Theorem \ref{thmA}) establishes a formula, the \textit{inner-rates formula}, which relates the inner rates with analytical data of the polar curve $\Pi_{\Phi}$  and, in particular, provides a concrete way to compute the inner rates. It  has an equivalent version in terms of the laplacian of a certain graph (Corollary \ref{laplace}) which is a broad generalization of the \textit{Laplacian formula} \cite[Theorem 4.3]{BFP}.
An important motivation of our result concerns the study of Milnor fibers. Consider a non-constant holomorphic function $f: (X,0) \to (\mathbb{C},0)$ and a generic linear form (Definition \ref{genericnashrelative}) $\ell : (X,0) \to (\mathbb{C},0)$. We provide an interpretation of the inner rates of the morphism $\Phi = (\ell,f):(X,0) \to (\mathbb{C}^2,0)$ in terms of metric properties of the Milnor fiber, Theorem \ref{thmB}.

As an application of our methods, we study the problem of \emph{polar exploration} associated to $\Phi$, following \cite{BFP,BFNP}. Roughly speaking, it is the study of the relative position on $(X,0)$ of the two curves  $\Pi_{\Phi}$ and $(gf)^{-1}(0)$ i.e., of the relative positions of their strict transform by a resolution of $(X,0)$. More precisely it is the problem of determining the embedded topological type (Definition \ref{embeddedtopologicaltype}) of the union $\Pi_{\Phi} \cup (gf)^{-1}(0)$ from that of the curve $(gf)^{-1}(0)$. This is a natural problem which has been studied by many authors such as  Merle, Garc\'{\i}a Barroso, Delgado, Maugendre, Kuo, Parusi\'nski, Michel, Belotto, Fantini, Némethi, Pichon,  among others (see, e.g.,\cite{merle,KTP,EGB,DM2003,Michel2008,MaugendreMichel2017,DM2021,BFNP,BFP2}). An important contribution in the general case was made by Michel in \cite{Michel2008} via the \emph{Hironaka quotients} of the morphism $\Phi$ (Definition \ref{defihironaka}). New techniques and results involving inner rates were recently developed in \cite{BFNP} in the case where $\Phi$ is a generic linear projection of $(X,0)$, and a complete answer to polar exploration was given in \cite{BFP2} in the case where  $(X,0)$ is a Lipschitz  normally embedded surface germ. In the present paper, we address the case of a general finite morphism $\Phi$. We show the relation between the inner rates and the Hironaka quotients (Theorem \ref{thmD}$(ii)$), and we give an alternative proof of \cite[Theorem 4.9]{Michel2008} based on our inner rates formula. Finally, we give a family of examples where the inner rates formula provides tighter restrictions  on the relative position of the polar curve than the ones obtained from previous methods (Proposition \ref{propositionA}).\\

We now present our results in a sharp form. Let $\pi:(X_{\pi},E) \longrightarrow (X,0)$ be a \textit{good resolution of singularities} of $(X,0)$, that is, a proper bimeromorphic map which is an isomorphism on the complementary of a  simple normal crossing divisor $\pi^{-1}(0)=E,$ called the \textit{exceptional divisor}. Let $E_v$ be an irreducible component of $E$. We call \textit{curvette} of $E_v$ a smooth curve germ which intersects transversely $E_v$ at a smooth point of $E$. 

Throughout the paper,  we use the big-Theta asymptotic notation of Bachmann--Landau in the following form: given two function germs $h_1,h_2\colon \big([0,\infty),0\big)\to \big([0,\infty),0\big)$ we say that $h_1$ is a  \emph{big-Theta} of $h_2$ and we write   $h_1(t) = \Theta \big(h_2(t)\big)$ if there exists real numbers $\eta>0$ and $K >0$ such that for all $t \in [0,\eta)$ we have ${K^{-1}}h_2(t) \leq h_1(t) \leq K h_2(t)$.

Let $(u_1,u_2)=(g,f)$ be the  coordinates of $\mathbb{C}^2$. We prove  (Proposition \ref{inner-rate}) the existence of a rational number $q_{g,v}^f$ such that  for any smooth point $p$ of $E$ in $E_v$ which does not meet the strict transforms of $f^{-1}(0), g^{-1}(0)$ or the polar curve $\Pi_{\Phi}$  and for any pair of  disjoint curvettes $\gamma_1^*$ and $\gamma_2^*$ of $E_v$  passing through points of $E$ close enough to $p$,  $$ \mathrm{d}(\gamma_1 \cap \{u_2 = \epsilon \}, \gamma_2 \cap \{u_2= \epsilon \} ) =\Theta(\epsilon^{q_{g,v}^f}),$$where $\gamma_1=\Phi \circ \pi (\gamma_1^*)$, $\gamma_2=\Phi \circ \pi (\gamma_2^*)$ and $\mathrm{d}$ is the standard hermitian metric of $\mathbb{C}^2$. We call the number $q_{g,v}^f$ \textit{inner rate of $f$ with respect to $g$ along $E_v$}.

As already mentioned, the notion of inner rates associated to a finite morphism generalizes the inner rates of  \cite{BFP}. Indeed, suppose that $(X,0)$ is embedded in $\mathbb{C}^n$. Assume that $\pi$ is a good resolution which factors through the blowup of the maximal ideal and the \textit{Nash transform} (see  e.g \cite[Introduction]{Spivakovsky1990} for the definition of the Nash transform) and  that $\Phi$ is a "generic" linear projection in the sense of \cite[Subsection 2.2]{BFP}.  Then, the inner rates associated to $\Phi$ coincide with the inner rates of the surface germ $(X,0)$ (see \cite[Definition 3.3]{BFP}), which are metric invariants of  $(X,0)$. \\
 


Let us now state the main result of this paper, which is proved in section \ref{section3}. Let $\Gamma_{\pi}$ be the \textit{dual graph} of the good resolution $\pi$, that is, the graph whose vertices are in bijection with the irreducible  components of $E$ and  such that the  edges between  the vertices $v$ and $v'$  corresponding to $E_v$ and $E_{v'}$ are in bijection with   $E_v \cap E_{v'}$. Each vertex $v$ of this graph is weighted with the self intersection number $E_v^2$ and the genus $g_v$ of the corresponding complex curve $E_v$. Let $\mathrm{val}_{\Gamma_{\pi}}(v):=\left(\sum_{i \in V(\Gamma_{\pi}),  i \neq v }E_i  \right)\cdot E_v$ be the \textbf{valency} of $v$. We denote by $V(\Gamma_{\pi})$ the set of vertices of $\Gamma_{\pi}$ and $E(\Gamma_{\pi})$ the set of edges. Let us denote by $m_v(f)$  the order of vanishing of the function $f\circ \pi$ along the irreducible component $E_v$ of $E$ and by $f^*$ the strict transform of the curve $f^{-1}(0)$ by $\pi$.

\begin{theorem}[{The inner rates formula}] \label{thmA} Let $(X,0)$ be a complex surface germ with an isolated singularity and let $\pi :(X_{\pi},E) \longrightarrow (X,0) $ be a good resolution of $(X,0)$. Let $g,f:(X,0) \longrightarrow (\mathbb{C},0)$ be two holomorphic functions on $X$ such that the morphism $\Phi=(g,f): (X,0) \longrightarrow (\mathbb{C}^2,0)$ is finite. Let $E_{v_1},E_{v_2},\dots,E_{v_n}$ be the irreducible components of $E$. Let $M_{\pi}=(E_{v_i} \cdot E_{v_j})_{i,j \in \{1,2,\ldots,n\}}$ be the intersection matrix of the dual graph $\Gamma_{\pi}$. Consider the four following vectors: $a_{g,\pi}^f:=(m_{v_1}{(f)}q_{g, v_1}^f,\ldots,m_{v_n}(f)q_{g, v_n}^f)$, $K_{\pi} :=( \mathrm{val}_{\Gamma_{\pi}} (v_1) +2g_{v_1}-2,\ldots,\mathrm{val}_{\Gamma_{\pi}} (v_n) +2g_{v_n}-2)$, the \textbf{$F$-vector} $F_{\pi}=(f^* \cdot E_{v_1},\ldots,f^* \cdot E_{v_n} )$  and  the \textbf{$\mathcal{P}$-vector} $P_{\pi}=(\Pi_{\Phi}^* \cdot E_{v_1},\ldots,\Pi_{\Phi}^* \cdot E_{v_n})$ . Then we have:
$$M_{\pi}  .\underline{a_{g,\pi}^f}=\underline{K_{\pi}}+\underline{F_{\pi}}-\underline{P_{\pi}}.$$
Equivalently, for each irreducible component $E_v$ of $E$ we have the  following:
$$
 \left( \sum_{i \in V(\Gamma_{\pi})} m_{i}(f)q_{g,i}^f E_i  \right) \cdot E_{v}=\mathrm{val}_{\Gamma_{\pi}}(v) +f^* \cdot E_v-\Pi_{\Phi}^* \cdot E_v+2g_v-2,
$$
 where "$\cdot$" denotes the intersection number between curves.\end{theorem}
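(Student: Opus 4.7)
The plan is to realize the inner rates formula as an adjunction-type computation on $X_\pi$ applied to the rational $2$-form $\omega := d(\pi^* g) \wedge d(\pi^* f)$, combined with a local identification of $q_{g,v}^f$ supplied by Proposition~\ref{inner-rate}. The key inputs are the explicit computation of the divisor of $\omega$ on the smooth surface $X_\pi$ and its numerical interpretation via the canonical class $K_{X_\pi}$.

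First I would determine $q_{g,v}^f$ by a local model. Pick a generic point $p$ of $E_v$ and holomorphic coordinates $(u,w)$ on $X_\pi$ with $E_v = \{u=0\}$, chosen so that $p$ avoids every other $E_{v'}$ as well as $f^*$, $g^*$ and $\Pi_\Phi^*$. Then $\pi^* f = u^{m_v(f)} h_f$ and $\pi^* g = u^{m_v(g)} h_g$ with $h_f, h_g$ units, and parametrizing two disjoint curvettes as $\{w = w_1\}$ and $\{w = w_2\}$ one computes their images by $\Phi \circ \pi$ and intersects with $\{u_2 = \epsilon\}$. A direct expansion gives a distance of the order $\epsilon^{m_v(g)/m_v(f)}$, hence $m_v(f)\, q_{g,v}^f = m_v(g)$.

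Second I would compute $\mathrm{div}(\omega)$ globally on $X_\pi$. In the same local coordinates,
\[
\omega = u^{m_v(f)+m_v(g)-1}\bigl[m_v(f) h_f \partial_w h_g - m_v(g) h_g \partial_w h_f\bigr]\, du \wedge dw \; + \; O\bigl(u^{m_v(f)+m_v(g)}\bigr),
\]
so the multiplicity of $\omega$ along $E_v$ is exactly $m_v(f) + m_v(g) - 1$. A parallel local expansion near a generic point of $f^*$ or $g^*$ shows that these strict transforms do not appear in $\mathrm{div}(\omega)$, while the vanishing locus of $\omega$ away from the exceptional divisor equals $\Pi_\Phi^*$ with the correct multiplicities, since $\Pi_\Phi$ is by definition the zero divisor of the Jacobian of $\Phi$. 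One therefore obtains
\[
\mathrm{div}(\omega) = \Pi_\Phi^* + \sum_{w} \bigl(m_w(f) + m_w(g) - 1\bigr) E_w.
\]

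To conclude, since $\omega$ is a rational $2$-form on the smooth surface $X_\pi$, $\mathrm{div}(\omega) \sim K_{X_\pi}$, so by adjunction $\mathrm{div}(\omega) \cdot E_v = -E_v^2 + 2 g_v - 2$. Expanding the left side and using that total transforms of functions have zero intersection with every exceptional curve, namely $\sum_w m_w(f) E_w \cdot E_v = -f^* \cdot E_v$ and $\sum_w m_w(g) E_w \cdot E_v = -g^* \cdot E_v$, together with $\sum_w E_w \cdot E_v = E_v^2 + \mathrm{val}_{\Gamma_\pi}(v)$, one rearranges to
\[
-g^* \cdot E_v = \mathrm{val}_{\Gamma_\pi}(v) + f^* \cdot E_v - \Pi_\Phi^* \cdot E_v + 2 g_v - 2.
\]
Substituting the identity $\sum_w m_w(f)\, q_{g,w}^f E_w \cdot E_v = \sum_w m_w(g) E_w \cdot E_v = -g^* \cdot E_v$ from the first step yields the stated formula. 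The main obstacle will be the rigorous global identification of $\mathrm{div}(\omega)$: one must track that $f^*$ and $g^*$ contribute nothing and that every $E_v$ appears with uniform multiplicity $m_v(f)+m_v(g)-1$, including at the special points where strict transforms or distinct exceptional components meet, as well as the correct accounting for the multiplicities of the individual components of $\Pi_\Phi^*$.
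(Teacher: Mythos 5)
Your overall strategy (pull back the $2$-form $du_1\wedge du_2$, compute its divisor on $X_\pi$, identify it with $K_{X_\pi}$, apply adjunction to $E_v$ and then Proposition \ref{laufer}) is exactly the paper's strategy, but the two computational steps where the inner rate is supposed to enter are both wrong, and for the same reason. In your first step you claim that the distance between the images of two curvettes is of order $\epsilon^{m_v(g)/m_v(f)}$, hence $m_v(f)\,q_{g,v}^f=m_v(g)$. This identifies the inner rate with the Hironaka quotient, which is false in general: writing $\pi^*g=u^{m_v(g)}h_g$, the difference $h_g(u,w_1)-h_g(u,w_2)$ is \emph{not} of order $1$ in $u$, because the $w$-dependence of the unit $h_g$ may only appear at order $u^k$ with $k>0$; the correct exponent is $(m_v(g)+k)/m_v(f)$, and this extra $k$ is precisely what makes $q_{g,v}^f$ a nontrivial invariant (see the proof of Proposition \ref{inner-rate} and Remark \ref{remarque}; in Example \ref{exemple3.5} one has $q_{g,v_3}^f=\tfrac14$ while $m_{v_3}(g)/m_{v_3}(f)=\tfrac1{12}$, and the two quantities coincide only on the subgraph $\mathcal{A}_\pi$ of Theorem \ref{touten1}).

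The same phenomenon breaks your second step: the bracket $m_v(f)h_f\partial_w h_g-m_v(g)h_g\partial_w h_f$ can vanish identically along $E_v$ (it does exactly when $k>0$), so the multiplicity of $\omega$ along $E_v$ is $m_v(f)q_{g,v}^f+m_v(f)-1$, not $m_v(f)+m_v(g)-1$. Consequently your claimed $\mathrm{div}(\omega)$ is wrong, and the intermediate identity you derive from adjunction, $-g^*\cdot E_v=\mathrm{val}_{\Gamma_\pi}(v)+f^*\cdot E_v-\Pi_\Phi^*\cdot E_v+2g_v-2$, is false (test it on $v_3$ in Example \ref{exemple3.5}: the left side is $0$, the right side is $-3$). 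Your final formula only appears to come out because the two errors cancel formally, i.e.\ you have in effect proved the statement with $q_{g,v}^f$ replaced everywhere by the Hironaka quotient, which is a different and incorrect assertion. To repair the argument you must use the normal form of Remark \ref{remarque}, in which the transverse variable enters $g\circ\pi$ only at order $x^{q_{g,v}^f m_v(f)}$ and the coefficient $g_1$ is a unit precisely because the chosen point avoids $\Pi_\Phi^*$; this gives $K_\Omega=\sum_i\bigl(q_{g,i}^f m_i(f)+m_i(f)-1\bigr)E_i+\Pi_\Phi^*$, after which adjunction and Proposition \ref{laufer} yield the formula as in the paper.
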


\noindent The idea of the proof is to relate the inner rates to the canonical divisor of the complex surface $X_{\pi}$ and then apply the classical adjunction formula on the irreducible components of the exceptional divisor $E=\pi^{-1}(0)$. This proof is quite shorter and different from the one provided in \cite{BFP} which relies on topological tools.

 Not only does theorem \ref{thmA} gives us a concrete way to compute the inner rates in terms of a  good resolution $\pi$ of $(X,0)$ just by computing the polar curve (see Example \ref{patricio}), but, since the intersection matrix $M_{\pi}$ is negative definite by a result of Mumford \cite[§1]{mum},  it also proves that given the dual graph $\Gamma_{\pi}$ together with the $F$-vector, the inner rates $q_{g,v}^f$  determines and are determined by the  $\mathcal{P}$-vector $(\Pi_{\Phi}^*\cdot E_1, \dots, \Pi_{\Phi}^* \cdot E_n)$. This fact allows us to  study polar curves by means of the inner rates, more specifically, to address the problem of \textit{polar exploration}, which we now describe. Suppose that $\pi:(X_\pi,E) \longrightarrow (X,0)$ is the minimal good resolution of $(X,0)$ and $(gf)^{-1}(0)$, that is, the minimal good resolution of $(X,0)$ such that $f^* \cup g^*$ is a disjoint union of curvettes of $E$. Following \cite{BFP}, polar exploration consists in answering the following question: is it possible to determine the $\mathcal{P}$-vector from the data of the  dual graph $\Gamma_{\pi}$, the $F$-vector and the $G$-vector?
 
An important contribution to this problem was made by Michel (\cite[Theorem 4.9]{Michel2008}) by means of the \textit{Hironaka quotients}. The Hironaka quotient of the morphism $\Phi$ associated to an irreducible component $E_v$ of $E$ is the rational number  $h_{g,v}^f=\frac{m_v(g)}{m_v(f)}, v \in V(\Gamma_{\pi})$. In this paper we will improve this result of Michel  by using the inner rates instead of the Hironaka quotients. The reason why the inner rates are more efficient than the Hironaka quotients for polar exploration comes from the following result.

\begin{theorem}\label{thmD}
Let $(X,0)$ be a complex surface germ and let  $\Phi=(g,f):(X,0) \longrightarrow (\mathbb{C}^2,0)$ be a finite morphism. Let $\pi:(X_{\pi},E)\longrightarrow (X,0)$ be a good resolution of $(X,0)$. There exists a subgraph $\mathcal{A}_{\pi}$ of $\Gamma_{\pi}$ such that: 

\begin{enumerate}
\item Call $f$-node any vertex $w$ of $\Gamma_{\pi}$ such that $f^*\cdot E_w \neq 0$. Let $v$ be   a vertex of $\Gamma_{\pi} $. There exists a path  from an $f$-node to $v$  in $\Gamma_{\pi}$  along which the inner rates are strictly increasing. 

\item The inner rates  and the Hironaka quotients of $\Phi$  coincide on $\mathcal{A}_{\pi}$.

\item The Hironaka quotients are constant on  $\Gamma_{\pi} \backslash \mathcal{A}_{\pi}$.
\end{enumerate}
\end{theorem}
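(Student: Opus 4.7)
The plan is to prove parts (ii) and (iii) together by an explicit construction of $\mathcal{A}_{\pi}$, and part (i) separately by a maximum-principle argument based on Theorem \ref{thmA}.

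I would define $\mathcal{A}_{\pi}$ to be the induced subgraph of $\Gamma_{\pi}$ on $\{v:q_{g,v}^f=h_{g,v}^f\}$, so that (ii) is tautological; the content of (iii) reduces to giving a combinatorial description of this set. To this end, consider the meromorphic function $\phi_v:=(g\circ\pi)^{m_v(f)}/(f\circ\pi)^{m_v(g)}$ on $X_{\pi}$, which has order zero along $E_v$ by definition of the multiplicities and hence restricts to a well-defined rational function on $E_v$. A direct computation of its divisor shows that $\phi_v|_{E_v}$ has zeros and poles only at the points of $E_v\cap(f^*\cup g^*)$ and at the intersection points $p\in E_v\cap E_w$ with $w\neq v$, where the local order is proportional to $h_{g,w}^f-h_{g,v}^f$. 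So $\phi_v|_{E_v}$ is non-constant precisely when $v$ is an $f$- or $g$-node, or has some neighbor in $\Gamma_{\pi}$ with a distinct Hironaka quotient. Parametrizing two close curvettes $\gamma_1^*,\gamma_2^*$ of $E_v$ through generic nearby points $p_1,p_2$ by $t$ with $f\circ\pi\sim t^{m_v(f)}$, one then computes that the difference of $g$-values at $f=\epsilon$ is $\Theta(\epsilon^{h_{g,v}^f})$ precisely when $\phi_v(p_1)\neq\phi_v(p_2)$. This identifies $\mathcal{A}_{\pi}$ with the combinatorially defined subgraph above, and (iii) follows: for $v\notin\mathcal{A}_{\pi}$, every neighbor of $v$ shares its Hironaka quotient, so by connectivity the Hironaka quotient is constant on each component of $\Gamma_{\pi}\setminus\mathcal{A}_{\pi}$.

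For (i), I argue by contradiction. Let $R\subseteq V(\Gamma_{\pi})$ be the set of vertices reachable from some $f$-node by a strictly increasing path in $q_{g,\cdot}^f$, and suppose $R\neq V(\Gamma_{\pi})$. Let $v_0$ minimize $q_{g,\cdot}^f$ on $V(\Gamma_{\pi})\setminus R$, and let $P$ be the maximal connected plateau of $v_0$ in $\{v:q_{g,v}^f=q_{g,v_0}^f\}$. A standard extension argument (any neighbor $w$ of $P$ with $q_{g,w}^f<q_{g,v_0}^f$ would either contradict minimality of $v_0$ or, via the edge $w\to P$, extend an increasing path to some vertex of $P$) shows that $P$ contains no $f$-node and every vertex of $\Gamma_{\pi}\setminus P$ adjacent to $P$ has strictly greater inner rate. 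Summing the inner rates formula over $v\in P$ and using the identity $\sum_i m_i(f)(E_i\cdot E_v)=-f^*\cdot E_v$ coming from $(f\circ\pi)\cdot E_v=0$, one obtains
\[
\sum_{v\in P,\;i\notin P}m_i(f)(q_{g,i}^f-q_{g,v_0}^f)(E_i\cdot E_v)=\sum_{v\in P}\bigl(\mathrm{val}_{\Gamma_{\pi}}(v)+2g_v-2-\Pi_{\Phi}^*\cdot E_v\bigr).
\]
The left-hand side is strictly positive because $\Gamma_{\pi}$ is connected and $P$ has nonempty boundary, while a combinatorial analysis on the shape of $P$, together with the positivity of $\Pi_{\Phi}^*\cdot E_v$, forces the right-hand side to be non-positive, giving the contradiction.

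The main obstacle I expect is completing the combinatorial analysis of the right-hand side in (i) when $P$ contains high-valency nodes or positive-genus components; then the Euler-characteristic term $\sum_{v\in P}(\mathrm{val}_{\Gamma_{\pi}}(v)+2g_v-2)$ can a priori be positive, and one must use the precise interaction between $\Pi_{\Phi}^*$ and the boundary of $P$ to force non-positivity. I expect this to follow by adapting the maximum-principle techniques of \cite{BFNP,BFP} for generic linear projections to the present setting of arbitrary finite morphisms.
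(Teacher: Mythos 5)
Your treatment of (ii) and (iii) is correct, and it takes a genuinely different route from the paper. The paper defines $\mathcal{A}_{\pi}$ as the union of simple paths joining $f$-nodes to $g$-nodes along which the inner rates function strictly increases, proves (ii) by reducing to the model case $(\mathbb{C}^2,0)$ with the identity morphism (via the adapted resolution of Lemma \ref{adapted} and the functoriality $\mathcal{I}_g^f=\mathcal{I}_{u_1}^{u_2}\circ\widetilde{\Phi}$ of Proposition \ref{inner-rate function}), and obtains (iii) by identifying its $\mathcal{A}_{\pi}$ with the set $\mathcal{G}_{\pi}$ of \cite{MaugendreMichel2017} and invoking their theorem. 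You instead take $\mathcal{A}_{\pi}:=\{v:\,q_{g,v}^f=h_{g,v}^f\}$ and characterize it through the rational function $\phi_v=(g\circ\pi)^{m_v(f)}/(f\circ\pi)^{m_v(g)}$ restricted to $E_v$: its divisor computation, combined with the curvette computation of Proposition \ref{inner-rate} (equivalently the coefficient $k$ in that proof being zero iff $\phi_v|_{E_v}$ is non-constant), correctly gives ``$q_{g,v}^f=h_{g,v}^f$ iff $v$ is an $f$- or $g$-node or has a neighbour with a different Hironaka quotient,'' and (iii) then follows by pure connectivity, with no appeal to the non-archimedean link, the covering $\widetilde{\Phi}$, or \cite{MaugendreMichel2017}. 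Since the theorem only asserts the existence of some subgraph with properties (ii)--(iii), your (possibly larger) $\mathcal{A}_{\pi}$ is admissible; note, however, that the paper's specific $\mathcal{A}_{\pi}$ is the one reused later (e.g.\ in Theorem \ref{francoise}), so the two sets should not be silently identified.

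Part (i) is where the proposal has a genuine gap, and it is not the minor technicality you suggest. After summing the inner rates formula over your plateau $P$ you need the right-hand side to be non-positive, i.e.
\begin{equation*}
\sum_{v\in P}\Pi_{\Phi}^*\cdot E_v\;\geq\;\sum_{v\in P}\bigl(\mathrm{val}_{\Gamma_{\pi}}(v)+2g_v-2\bigr)\;=\;b(P)-2\chi(P)+2\sum_{v\in P}g_v,
\end{equation*}
where $b(P)$ is the number of edges leaving $P$. When $P$ is a chain with $b(P)\leq 2$ and genus $0$ this is free, but as soon as $P$ contains a vertex of valency $\geq 3$, a cycle, or positive genus, you need a lower bound on the polar multiplicities along the plateau — i.e.\ information about where $\Pi_{\Phi}$ goes, which is exactly the polar-exploration-type knowledge the theorem is meant to feed into, and nothing in Theorem \ref{thmA} alone supplies it for an arbitrary finite morphism. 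The paper sidesteps this entirely: it proves the base case on $(\mathbb{C}^2,0)$ by the explicit blowup recursion of Lemma \ref{recurence} (where the polar curve of the identity is empty) and then transports strictly increasing paths to $\Gamma_{\pi}$ through the ramified covering $\widetilde{\Phi}$ (Proposition \ref{property}). There is also a secondary flaw in your set-up: the claims that $P$ contains no $f$-node and that every outside neighbour of $P$ has strictly larger rate do not follow from the stated extension argument. If a boundary vertex $w\in R$ with $q_{g,w}^f<q_{g,v_0}^f$ feeds an increasing path into some vertex of $P$, you only learn that that vertex lies in $R$; you cannot continue strictly across the constant plateau to reach $v_0$, so no contradiction with $v_0\notin R$ arises, and likewise an $f$-node inside $P$ (which lies in $R$ trivially) is not excluded. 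As written, part (i) is therefore not proved, and I do not see how to close it by these Laplacian estimates alone without importing covering or geometric arguments of the kind the paper (and \cite{BFNP,BFP}) actually uses.
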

 Points $(i)$ and $(ii)$  are proved in the sections \ref{section6} and \ref{section7}, while the point $(iii)$ comes from \cite[Theorem 1]{MaugendreMichel2017}.


 In Section \ref{section8}, we  show that {\cite[Theorem 4.9]{Michel2008}}  can be obtained as a consequence of Theorems \ref{thmA} and \ref{thmD}. We state it here taking account of  \cite[Theorem 1]{MaugendreMichel2017}:

\begin{theorem*}[{\cite[Theorem 4.9]{Michel2008}}]  Let $(X,0)$ be a complex surface germ with an isolated singularity and let $g,f:(X,0) \longrightarrow (\mathbb{C},0)$ be two holomorphic functions on $X$ such that the morphism $\Phi=(g,f): (X,0) \longrightarrow (\mathbb{C}^2,0)$ is finite. Let $\pi :(X_{\pi},E) \longrightarrow (X,0) $ be a good resolution of $(X,0)$. Let $\mathcal{A}_{\pi}$ be a subgraph of $\Gamma_{\pi}$ as in the statement of Theorem \ref{thmD}. Let $\mathcal{Z}$ be  a connected component of $\overline{\Gamma_{\pi} \backslash \mathcal{A}_{\pi}}$ or a single vertex on the complementary of  $\overline{\Gamma_{\pi} \backslash \mathcal{A}_{\pi}}$, then :


$$  \sum_{v \in \mathcal{Z} } m_v(f) \Pi_\Phi^* \cdot   E_v=- \left(  \sum_{v \in \mathcal{Z}} m_v(f) \chi'_v  \right).$$ where $\chi'_v:=2-2g_v-\mathrm{val}(v) -f^* \cdot E_v-g^* \cdot E_v$.

\end{theorem*}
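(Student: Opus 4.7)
The plan is to rewrite Michel's identity as an intersection-number identity on $X_\pi$, apply the inner-rates formula of Theorem \ref{thmA}, and then use Theorem \ref{thmD}(ii) together with the structure of $\mathcal{A}_\pi$ to control the vertices outside $\mathcal{A}_\pi$. Introduce the exceptional divisors $D_f := \sum_v m_v(f)\,E_v$, $D_g := \sum_v m_v(g)\,E_v$, $D_q := \sum_v m_v(f)\,q_{g,v}^f\,E_v$ and $D_f^{\mathcal{Z}} := \sum_{v \in \mathcal{Z}} m_v(f)\,E_v$. Combining Theorem \ref{thmA} in the form $D_q \cdot E_v = \mathrm{val}_{\Gamma_\pi}(v) + 2g_v - 2 + f^* \cdot E_v - \Pi_\Phi^* \cdot E_v$ with the definition of $\chi'_v$ yields
\[
\Pi_\Phi^* \cdot E_v + \chi'_v = -\,D_q \cdot E_v - g^* \cdot E_v .
\]
Summing with weights $m_v(f)$ over $v \in \mathcal{Z}$, and using that $g \circ \pi$ is a principal divisor on $X_\pi$ (so $(D_g + g^*) \cdot D_f^{\mathcal{Z}} = 0$, i.e., $g^* \cdot D_f^{\mathcal{Z}} = -D_g \cdot D_f^{\mathcal{Z}}$), Michel's identity reduces to the single claim
\[
(D_q - D_g) \cdot D_f^{\mathcal{Z}} = 0 .
\]

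To prove this, first observe that by Theorem \ref{thmD}(ii) one has $q_{g,v}^f = m_v(g)/m_v(f)$ for every $v \in V(\mathcal{A}_\pi)$, so the coefficient of $E_v$ in $D_q - D_g$ vanishes on $\mathcal{A}_\pi$; the divisor $D_q - D_g$ is therefore supported on $V(\Gamma_\pi) \setminus V(\mathcal{A}_\pi)$. I now distinguish the two cases in the statement. If $\mathcal{Z} = \{v_0\}$ with $v_0 \in V(\mathcal{A}_\pi)$ outside $\overline{\Gamma_\pi \setminus \mathcal{A}_\pi}$, then no neighbor of $v_0$ lies outside $V(\mathcal{A}_\pi)$, so $E_{v_0}$ is disjoint from the support of $D_q - D_g$ and $(D_q - D_g)\cdot E_{v_0} = 0$ at once. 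If $\mathcal{Z}$ is a connected component of $\overline{\Gamma_\pi \setminus \mathcal{A}_\pi}$, set $\mathcal{Z}^{\circ} := \mathcal{Z} \cap (\Gamma_\pi \setminus \mathcal{A}_\pi)$; a short connectedness argument shows that for every $w \in \mathcal{Z}^{\circ}$ each neighbor of $w$ in $\Gamma_\pi$ lies in $\mathcal{Z}$, since neighbors outside $\mathcal{A}_\pi$ lie in the same component of $\Gamma_\pi \setminus \mathcal{A}_\pi$ as $w$ while neighbors in $\mathcal{A}_\pi$ are boundary vertices absorbed by the closure. Consequently $E_w \cdot D_f^{\mathcal{Z}} = D_f \cdot E_w = -f^* \cdot E_w$, where the last equality uses that $f \circ \pi$ is principal. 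Assuming that every $f$-node lies in $V(\mathcal{A}_\pi)$, one has $f^* \cdot E_w = 0$ for $w \in \mathcal{Z}^{\circ}$, and therefore
\[
(D_q - D_g) \cdot D_f^{\mathcal{Z}} = \sum_{w \in \mathcal{Z}^{\circ}} \bigl(m_w(f)\,q_{g,w}^f - m_w(g)\bigr)\,(E_w \cdot D_f^{\mathcal{Z}}) = 0 ,
\]
as required.

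The hard part of this plan is the auxiliary input that every $f$-node belongs to $V(\mathcal{A}_\pi)$: it is consistent with Theorem \ref{thmD}(i) (which uses $f$-nodes as starting points of strictly increasing inner-rate paths) and with the fact that at an $f$-node the inner rate coincides by direct computation with the Hironaka quotient, but it must be extracted from the detailed construction of $\mathcal{A}_\pi$ carried out in Sections \ref{section6}--\ref{section7} rather than from the bare statement of Theorem \ref{thmD}; once that structural property is in hand, the reduction above turns Michel's theorem into an elementary consequence of the inner-rates formula and the principality of $f \circ \pi$ and $g \circ \pi$.
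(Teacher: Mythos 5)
Your reduction is exactly the paper's: after adding $g^*\cdot E_v$ via Proposition \ref{laufer} and summing with weights $m_v(f)$ over $\mathcal{Z}$, Theorem \ref{thmA} turns the statement into the vanishing of $(D_q-D_g)\cdot D_f^{\mathcal{Z}}$, which is precisely the quantity called $A$ in the paper's proof of Theorem \ref{francoise}. Where you diverge is in how this vanishing is obtained, and there the proposal has a genuine gap: your argument stands or falls with the claim that every $f$-node lies in $V(\mathcal{A}_{\pi})$ (equivalently $f^*\cdot E_w=0$ for every vertex $w$ of $\mathcal{Z}\setminus\mathcal{A}_{\pi}$). You state this as an ``auxiliary input'' to be extracted from Sections \ref{section6}--\ref{section7}, but it is proved nowhere in the paper and does not follow from the bare statement of Theorem \ref{thmD}: point $(i)$ only says that \emph{some} $f$-node is the source of a strictly increasing path to any given vertex, not that \emph{every} $f$-node is the source of a strictly increasing path reaching a $g$-node, and $\mathcal{A}_{\pi}$ is by definition the union of the latter kind of paths. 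Establishing it would require redoing the covering/lifting analysis of $\widetilde{\Phi}$ (every $f$-node maps to the $u_2$-node, lifts of $\mathcal{A}_{\sigma}$ starting there end at $g$-nodes, and this survives the contraction $\widetilde{\alpha}_{\pi}$ for an arbitrary good resolution $\pi$, where $f^*$ need not be a curvette), none of which is in your text. So as written the proof is incomplete at its decisive step.

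It is worth noting how the paper closes the same computation without any statement about the location of the $f$-nodes: it uses Theorem \ref{touten1} -- the coincidence of $\mathcal{I}_g^f$ and $\mathcal{H}_g^f$ on $\mathcal{A}_{\pi}$ to kill the terms involving neighbours in $\mathcal{A}_{\pi}$, and the \emph{constancy of the Hironaka quotients on $\mathcal{Z}$} (point $(iii)$, i.e.\ \cite[Theorem 1]{MaugendreMichel2017}) to kill the cross terms $m_i(f)m_v(g)-m_i(g)m_v(f)$ inside $\mathcal{Z}$, after which the remaining sum $\sum_{i,v\in V(\mathcal{Z})}m_i(f)m_v(f)(q_{g,i}^f-q_{g,v}^f)E_i\cdot E_v$ vanishes by antisymmetry. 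Your route never invokes point $(iii)$, which is exactly the ingredient that lets the paper bypass your missing claim. A local repair of your argument is available and closer in spirit to the paper: for a vertex $w\in\mathcal{Z}\setminus\mathcal{A}_{\pi}$ which happens to be an $f$-node, its term can be discarded not because $E_w\cdot D_f^{\mathcal{Z}}=0$ but because its coefficient $m_w(f)q_{g,w}^f-m_w(g)$ vanishes, since at a vertex met transversally by $f^*$ one has $q_{g,w}^f=m_w(g)/m_w(f)$ by Lemma \ref{recurence'}; but for an arbitrary good resolution of $(X,0)$ (the generality claimed in Theorem \ref{francoise}, where $f^*$ need not be a union of curvettes) even this requires an extra argument, so either supply it or follow the paper's endgame via Theorem \ref{touten1}$(iii)$. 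Finally, a small omission: in your last display the sum should a priori run over all vertices in the support of $D_q-D_g$, and you must observe (by the same neighbour argument) that vertices outside $\mathcal{A}_{\pi}$ lying in a component different from $\mathcal{Z}$ satisfy $E_w\cdot D_f^{\mathcal{Z}}=0$; this is easy but should be said.
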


It is now natural to ask: can we get a better restriction for the value of the $\mathcal{P}$-vector when using the inner rates  and their properties? We give a positive answer to this question via the following result (see proposition \ref{famille} for a more precise statement).

\begin{proposition}\label{propositionA}

There exists a sequence of graphs with arrows  $(\Gamma_n)_{n\geq 2}$ such that for each $n$:
\begin{enumerate}
\item There exists a  complex surface singularity  $(X_n,0)$ and a finite morphism $\Phi_n=(g_n,f_n) :( X_n,0) \longrightarrow (\mathbb{C}^2,0)$ such that   $\Gamma_n$ is the dual graph of the minimal good resolution of $(X_n,0)$ and $(g_nf_n)^{-1}(0)$;

\item The $\mathcal{P}$-vector of any such morphism $\Phi_n$ belongs to a set of $n+5$ elements.
\end{enumerate}

  \end{proposition}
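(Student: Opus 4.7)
The plan has two components: an explicit construction of the graph family $(\Gamma_n)_{n \geq 2}$ realizing point (i), and a counting argument based on Theorems \ref{thmA} and \ref{thmD} that yields point (ii).

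For the construction, I would define $\Gamma_n$ as a combinatorially simple weighted graph -- naturally, a central chain of $n$ vertices decorated by a bounded number of auxiliary vertices carrying the arrows that encode $f_n^*$ and $g_n^*$ -- with self-intersection numbers chosen so that the associated intersection matrix $M_{\pi}$ is negative definite. Grauert's contractibility criterion then produces a normal surface singularity $(X_n,0)$ whose minimal good resolution has dual graph $\Gamma_n$. One obtains meromorphic functions $f_n$ and $g_n$ on $X_n$ realizing the prescribed arrow data by solving the linear systems $M_{\pi} \cdot \underline{m(f)} = -\underline{F_\pi}$ and $M_{\pi} \cdot \underline{m(g)} = -\underline{G_\pi}$; finiteness of $\Phi_n = (g_n,f_n)$ holds provided the graph is set up so that $f_n$ and $g_n$ share no common irreducible factor.

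For the counting, fix any realization. Since $M_{\pi}$ is invertible, Theorem \ref{thmA} rewrites as
\[
\underline{P_\pi} \;=\; \underline{K_\pi} + \underline{F_\pi} - M_{\pi}\cdot \underline{a_{g,\pi}^f},
\]
so the $\mathcal{P}$-vector is uniquely determined by the inner-rates vector. The Hironaka quotients $h_{g,v}^f = m_v(g)/m_v(f)$ are themselves fixed by $\Gamma_n$ and the arrow data. Theorem \ref{thmD}(ii)--(iii) then forces the inner rates to equal these quotients on some subgraph $\mathcal{A}_\pi$ containing every $f$-node, while Theorem \ref{thmD}(i) requires strict monotonicity along every path from an $f$-node. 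Combined with the effectivity constraint $\underline{P_\pi} \geq 0$, these conditions cut the admissible inner-rates vectors down to a finite set. The aim is to design $\Gamma_n$ so that this set has cardinality exactly $n+5$: concretely, $n$ choices for the location of the boundary of $\mathcal{A}_\pi$ along the central chain, plus $5$ extremal configurations arising from the auxiliary arrow decorations.

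The main obstacle is to tune the self-intersection weights and arrow placements in $\Gamma_n$ so that the count is sharp on both sides. On the one hand, each of the $n+5$ candidate inner-rates vectors must yield a non-negative integer $\mathcal{P}$-vector via the inverted formula; on the other hand, no further vector can simultaneously satisfy the Hironaka-coincidence condition on $\mathcal{A}_\pi$ and the strict monotonicity of Theorem \ref{thmD}(i) outside. Verifying sharpness requires explicit computation of $M_{\pi}^{-1}$ against the finitely many candidate vectors, and, to justify the claim that the inner rates give a tighter restriction than previous methods, a parallel check that the classical Hironaka-quotient bound from \cite{Michel2008} admits strictly more than $n+5$ configurations on the same graphs. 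Once this balance is struck, the enumeration itself is mechanical.
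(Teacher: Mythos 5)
Your outline follows the same general strategy as the paper's Proposition \ref{famille} (explicit graph, Grauert realization, then the inner-rates formula plus Theorem \ref{thmD} to cut down the $\mathcal{P}$-vectors), but as written it has a genuine gap in step (i): solving the linear systems $M_{\pi}\cdot \underline{m(f)}=-\underline{F_{\pi}}$ and $M_{\pi}\cdot \underline{m(g)}=-\underline{G_{\pi}}$ does not produce holomorphic functions $f_n,g_n$. It only produces candidate multiplicity vectors, i.e.\ topological cycles; on a general normal surface singularity not every topological cycle is the cycle of a holomorphic function, so the existence of $(g_n,f_n)$ with the prescribed arrow data cannot be read off the matrix alone. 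This is exactly why the paper arranges the weights of $\Gamma_n$ so that $\chi(Z_{min}(\Gamma_{\pi_n}))=1$ (computed by Laufer's algorithm), concludes that $(X_n,0)$ is rational, and then invokes Artin's theorem (Theorem \ref{artin}, $Z_{top}=Z_{an}$) to realize the cycles $D_{n,1},D_{n,2}$ by actual functions; a further auxiliary blowup of fifteen points of $E_{v_1}$ is needed to guarantee that $f_n^*\cup g_n^*$ is a disjoint union of curvettes, so that $\Gamma_n$ really is the dual graph of the minimal good resolution of the pair. None of this is replaceable by ``the graph is set up so that $f_n$ and $g_n$ share no common irreducible factor.''

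The second issue is that the counting part is a plan rather than an argument: you never exhibit the graphs, and the claim that the admissible set has exactly $n+5$ elements is deferred (``the aim is to design $\Gamma_n$ so that\ldots'', ``once this balance is struck''). The substance of the proposition is precisely this explicit construction and enumeration. In the paper it proceeds as follows: Theorem \ref{touten1}$(ii)$ pins the inner rates on $\mathcal{A}_{\pi}$ ($5/7$, $4/5$, $1$ at $v_1,v_2,v_3$), the formula of Theorem \ref{laplacien} gives $\Pi^*\cdot E_{v_1}=14$, $\Pi^*\cdot E_{v_2}=0$, and then an induction along the chain, using the strict-increase statement of Theorem \ref{touten1}$(i)$ (which, note, asserts the existence of an increasing path from an $f$-node, not monotonicity along every path — on this tree with a single $f$-node the two coincide), forces $q_{g_n,v_k}^{f_n}=\tfrac{2k-3}{k}$ and $\Pi^*\cdot E_{v_k}=0$ for $3\le k\le 4n-1$; finally Theorem \ref{francoise} applied to the remaining vertices reduces everything to the single equation $4np_{4n}+2np_{2n}'+(2n+1)p_{2n+1}'+2p_2'+p_1'=2n+2$ in non-negative integers, whose solution set has $n+5$ elements. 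Until you specify a concrete $\Gamma_n$, prove the realization as above, and carry out such an enumeration, the statement is not established. (Also note that point (ii) only asks for a set of $n+5$ elements containing the $\mathcal{P}$-vector, so the ``sharpness on both sides'' you worry about, and the comparison with the bound from \cite{Michel2008}, are not needed for the proposition itself; the latter is only the content of Remark \ref{remaramanujan}.)
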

  
   If one  performs a polar exploration on this family of examples using only \cite[Theorem 4.9]{Michel2008}, one obtains an exponential bound for the number of $\mathcal{P}$-vectors (see Remark \ref{remaramanujan}), while Proposition \ref{propositionA} provides a linear bound.


Another  important motivation to study the inner rates is to provide analytic invariants of the Milnor fibration. Let $(X,0)$ be a complex analytic surface germ embedded in $\mathbb{C}^n$ and let $f:(X,0) \longrightarrow (\mathbb{C},0)$ be a non constant holomorphic function. Let $\pi:(X_{\pi},E) \longrightarrow (X,0)$ be a good resolution which factors through the blowup of the maximal ideal and the Nash transform relative to $f$ (Definition \ref{nashgauss}). Let $\ell$ be a \textit{generic linear form} with respect to $\pi$ (Definition \ref{genericnashrelative}). The inner rates $q_{\ell, v}^f, v \in V(\Gamma_{\pi})$ associated to the morphism $(\ell,f)$ do not depend  of the choice  of the generic linear form $\ell$, we then denote them $q_v^f, v \in V(\Gamma_{\pi})$. In this case the inner rates $q_v^f$ gives informations on the \textit{inner metric}  of the Milnor fibers:

\begin{theorem}[{Theorem \ref{Milnor}}] \label{thmB} Let $(X,0) \subset (\mathbb{C}^n,0)$ be a complex surface germ with an isolated singularity at the origin of $\mathbb{C}^n$ and  let $f:(X,0) \longrightarrow (\mathbb{C},0)$  be a non constant holomorphic function. Let $ \pi: (X_{\pi},E) \longrightarrow (X,0)$ be a good resolution which factors through the Nash transform of $X$ relative to $f$ and the blowup of the maximal ideal.

Let   $\gamma_1^*$ and $\gamma_2^*$  be two curvettes of an irreducible component $E_v$ of the exceptional divisor $E$ such that $\gamma_i^*\cap f^*=\emptyset$ for $i \in \{1,2\}$.
Then there exists $q_v^f \in \mathbb{Q}_{>0}$ such that 
$$ \mathrm{d}_{\epsilon}(\gamma_1 \cap f^{-1}(\epsilon),\gamma_2 \cap f^{-1}(\epsilon)) = \Theta(\epsilon^{q_v^f})$$where $\gamma_1=\pi(\gamma_1^*)$,$\gamma_2=\pi(\gamma_2^*)$ and $\mathrm{d}_{\epsilon}$ is the Riemanian metric induced by $\mathbb{C}^n$ on the Milnor fiber $f^{-1}(\epsilon)$. Furthermore we have $q_v^f=q_{\ell,v}^f$ whenever $\ell$ is a generic linear form with respect to $f$ and $\pi$.
\end{theorem}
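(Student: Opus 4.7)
The overall strategy is to compare the inner metric on the Milnor fiber with the $\mathbb{C}^2$-distance of its images under $\Phi=(\ell,f)$. I would apply Proposition~\ref{inner-rate} to the finite morphism $\Phi=(\ell,f)\colon (X,0)\to(\mathbb{C}^2,0)$, which produces a rational number $q_{\ell,v}^f$ and, for curvettes $\gamma_1^*,\gamma_2^*$ of $E_v$ close to a generic smooth point of $E_v$, the asymptotic
\[
\mathrm{d}\bigl(\Phi(p_1),\Phi(p_2)\bigr)=\Theta\bigl(\epsilon^{q_{\ell,v}^f}\bigr),
\]
where $p_i:=\gamma_i\cap f^{-1}(\epsilon)$ and $\gamma_i=\pi(\gamma_i^*)$. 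Since $f(p_1)=f(p_2)=\epsilon$, the left-hand side reduces to $|\ell(p_1)-\ell(p_2)|$. It therefore suffices to prove that $\mathrm{d}_\epsilon(p_1,p_2)=\Theta\bigl(|\ell(p_1)-\ell(p_2)|\bigr)$ and then set $q_v^f:=q_{\ell,v}^f$.

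The upper bound $|\ell(p_1)-\ell(p_2)|\leq K\cdot\mathrm{d}_\epsilon(p_1,p_2)$ is immediate: the linear form $\ell$ is Lipschitz on $\mathbb{C}^n$, so pushing forward any rectifiable path on $f^{-1}(\epsilon)$ from $p_1$ to $p_2$ yields a path in $\mathbb{C}$ from $\ell(p_1)$ to $\ell(p_2)$ of at most $K$ times the original length. The reverse inequality exploits the hypothesis that $\pi$ factors through the Nash transform of $X$ relative to $f$: this ensures that the Gauss map associated to the fibers of $f$ extends holomorphically to $X_\pi$, so the tangent planes to $(f\circ\pi)^{-1}(\eta)$ admit well-defined limits along approaches to points of $E_v\setminus f^*$. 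By Definition~\ref{genericnashrelative}, a generic linear form $\ell$ is transverse to all such limit planes. Consequently, on a small neighborhood $U_v\subset X_\pi$ of a smooth point of $E_v$ avoiding $f^*$ and the strict transforms of $\ell^{-1}(0)$ and $\Pi_\Phi$, the restriction of $\ell\circ\pi$ to $(f\circ\pi)^{-1}(\epsilon)\cap U_v$ is, for every small enough $\epsilon$, a biholomorphism onto its image whose inverse is Lipschitz with a constant $K'$ independent of $\epsilon$. Lifting the straight segment from $\ell(p_1)$ to $\ell(p_2)$ through this inverse produces a path in $f^{-1}(\epsilon)$ of length at most $K'\cdot|\ell(p_1)-\ell(p_2)|$.

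The main technical obstacle is to guarantee that the lifted segment actually stays inside $\pi(U_v)$ and joins $p_1$ to $p_2$. This requires the two curvettes $\gamma_1^*,\gamma_2^*$ to approach the same smooth point of $E_v$, so that for $\epsilon$ small both $p_1,p_2$ belong to the same connected component of $f^{-1}(\epsilon)\cap\pi(U_v)$ and the segment from $\ell(p_1)$ to $\ell(p_2)$ lies entirely inside $\ell(f^{-1}(\epsilon)\cap\pi(U_v))$. Once these points are verified, combining both inequalities yields $\mathrm{d}_\epsilon(p_1,p_2)=\Theta(\epsilon^{q_{\ell,v}^f})$, so that $q_v^f:=q_{\ell,v}^f$ satisfies the theorem. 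Finally, since the inner metric on $f^{-1}(\epsilon)$ does not depend on $\ell$, the big-Theta class determines the exponent uniquely; hence $q_{\ell,v}^f$ takes the same value for every generic linear form $\ell$, and is an intrinsic invariant of $(X,0)$, $f$ and $v$.
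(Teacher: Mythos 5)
Your local argument is sound and essentially reproduces the paper's mechanism: Proposition \ref{inner-rate} applied to $\Phi=(\ell,f)$ gives the outer estimate $\lvert\ell(p_1)-\ell(p_2)\rvert=\Theta(\epsilon^{q_{\ell,v}^f})$, and the comparison $\mathrm{d}_\epsilon(p_1,p_2)=\Theta(\lvert\ell(p_1)-\ell(p_2)\rvert)$ is exactly what the paper extracts from the boundedness of the local bilipschitz constant $K_D$ away from $\Pi_D^*$ (Lemma \ref{lipshitz} and Corollary \ref{bilipshitz}), which in turn rests on the holomorphic extension of the relative Gauss map granted by factoring $\pi$ through the Nash transform relative to $f$. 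Your transversality-to-limit-planes justification of the uniform Lipschitz inverse is the same idea in different words, and your observation that the $\ell$-independence of the inner metric forces $q_{\ell,v}^f$ to be independent of the generic $\ell$ matches the paper.

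The genuine gap is the scope of the statement you actually prove. The theorem asserts the asymptotics for \emph{arbitrary} curvettes $\gamma_1^*,\gamma_2^*$ of $E_v$ subject only to $\gamma_i^*\cap f^*=\emptyset$; you explicitly restrict to curvettes approaching the same smooth point $p$ of $E_v$, with $p$ moreover avoiding $\ell^*$ and $\Pi_\Phi^*$ for your chosen $\ell$, and you flag the lifting issue as something ``to be verified'' without resolving it. Two things are missing. First, to handle a curvette hitting $E_v$ near $\ell^*\cap E_v$ or $\Pi_\Phi^*\cap E_v$ you must change the linear form: the factorization of $\pi$ through the Nash transform relative to $f$ and the blowup of the maximal ideal guarantees (Proposition \ref{basepoint}) that the families of generic polar curves and generic hyperplane sections have no base points, so for \emph{every} smooth point $p\in E_v\setminus f^*$ there is a generic $\ell$ with $p\notin\ell^*\cup\Pi_{(\ell,f)}^*$; combined with the $\ell$-independence of the exponent this makes the local estimate available at every such $p$. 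Second, you must propagate the estimate from ``two curvettes through the same small neighborhood $O_p$'' to ``two curvettes through distinct, possibly distant, points $p,p'$ of $E_v$'': the paper does this by compactness of $E_v$, choosing a finite chain $p=p_1,\dots,p_s=p'$ with intermediate curvettes and comparing $\mathrm{d}_\epsilon(\gamma\cap f^{-1}(\epsilon),\gamma'\cap f^{-1}(\epsilon))$ with the chain of local distances, each of order $\epsilon^{q_v^f}$, to conclude the same exponent for the distant pair. Without these two steps your argument establishes only a neighborhood version of the theorem, not the statement as given.
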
\noindent Theorem \ref{thmB} is a relative version (with respect to $f$) and a generalization of \cite[Lemma 3.2]{BFP}. Indeed, one obtains \cite[Lemma 3.2]{BFP} by taking $f$ a generic linear form in the sense of \cite[Subsection 2.2]{BFP}.

As an application of Theorems \ref{thmA} and \ref{thmB} we give a generalization of a  result of Garc\'{\i}a Barroso and Teissier on the asymptotic behavior of the integral of the Lipschitz-Killing curvature (Definition \ref{langevincourbure}) along Milnor fibers. Let $\pi:(X_{\pi},E) \longrightarrow (X,0)$ be a good resolution which factors through the blowup of the maximal ideal and the Nash transform relative to $f$. Let $E_v$ be an irreducible component of $E$ and let  $\mathcal{N}(E_v,\epsilon), \epsilon >0$ be a family of tubular neighborhoods of $E_v$ in $X_{\pi}$ such that $\lim\limits_{\epsilon \rightarrow 0}\mathcal{N}(E_v,\epsilon)=E_v $ and such that  $\mathrm{Horn}(\epsilon,v):= \pi(\mathcal{N}(E_v,\epsilon) )$ is included in  $B_{\epsilon}$. Consider the set $F_{\epsilon,t}^v=f^{-1}(t) \cap \mathrm{Horn}(\epsilon,v)$ and let  $\delta_{\epsilon}^{\ell}>0$   be such that for any complex number $t$ with $|t| \leq  \delta_{\epsilon}^{\ell}$, we have $ \mathrm{Card}\{ F_{\epsilon,t}^v \cap \Pi_{\ell} \} = m_v(f) \Pi_\ell^* \cdot E_v$ where  $\ell$ is a generic linear form with respect to $\pi$. Set $\delta_{\epsilon}=\mathrm{min}_{\ell}\{\delta_{\epsilon}^{\ell} \}$.

\begin{theorem}[{Theorem \ref{concentration}}] \label{thmE} Let $(X,0)$ be a complex surface germ with an isolated singularity embedded in $(\mathbb{C}^n,0)$ and let $f:(X,0) \longrightarrow (\mathbb{C},0)$ be a non constant holomorphic function germ. Let $\pi:(X_{\pi},E) \longrightarrow (X,0)$ be a good resolution of $(X,0)$ which factors through the Nash transform of $X$ relative to $f$ and through the blowup of the maximal ideal of $(X,0)$. Let $v$ be a vertex of $\Gamma_\pi$, then:

$$\lim\limits_{\epsilon \rightarrow 0, |t| <{\delta_{\epsilon}}}  \int_{p \in F_{\epsilon,t}^v} K_{F_{\epsilon,t}^v}(p) \mathrm{d}V=\frac{\pi \omega_{2} }{2\omega_{2n-1}} \mathrm{Vol}(\mathbb{G}^{n-1}(\mathbb{C}^n)) \mathrm{C}_f , $$
where   $$C_f=m_v(f)\left( 2g_v-2+ \mathrm{Val}_{\Gamma_{\pi}}(v)+f^* \cdot E_v-\sum_{i \in V(\Gamma_{\pi}) }m_i(f)q_{i}^fE_i \cdot E_v \right).$$ and $\omega_i$ is the volume of the unit sphere $\mathbb{S}^{i}$.
\end{theorem}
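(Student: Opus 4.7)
The plan is to combine a Cauchy--Crofton / Langevin-type integral-geometric formula for the Lipschitz--Killing curvature of a real surface in $\mathbb{C}^n$ with the inner-rates formula of Theorem~\ref{thmA}. The universal constant $\tfrac{\pi\omega_2}{2\omega_{2n-1}}\mathrm{Vol}(\mathbb{G}^{n-1}(\mathbb{C}^n))$ appearing in the statement is precisely the Langevin constant: for a compact oriented real surface $S\subset\mathbb{C}^n$, Langevin's formula expresses $\int_S K_S\,dV$ as this constant times the average, over the Grassmannian of complex hyperplanes $H$, of the number of critical points of the orthogonal projection from $S$ onto $H$. The García Barroso--Teissier theorem is a special case of this philosophy, and I would work throughout in that framework.

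First, for each $\epsilon>0$ and each $t$ with $|t|<\delta_\epsilon$, I apply the Langevin formula to $S=F_{\epsilon,t}^v$, which is a compact Riemann surface with boundary lying in $\partial\mathrm{Horn}(\epsilon,v)$. A hyperplane $H\in\mathbb{G}^{n-1}(\mathbb{C}^n)$ is the kernel of a linear form $\ell$, and for $H$ in a dense open subset of the Grassmannian the form $\ell$ is generic with respect to $\pi$ in the sense of Definition~\ref{genericnashrelative}. For such $\ell$, the critical points of $\ell|_{F_{\epsilon,t}^v}$ are precisely the points of $F_{\epsilon,t}^v\cap\Pi_\ell$, where $\Pi_\ell$ denotes the polar curve of the finite morphism $\Phi=(\ell,f)$; by the very definition of $\delta_\epsilon$ this cardinality equals $m_v(f)\,\Pi_\ell^*\cdot E_v$, which is independent of $t$ and of the choice of generic $\ell$. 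Passing to the limit and integrating the constant count over the Grassmannian yields
$$
\lim_{\epsilon\to 0,\,|t|<\delta_\epsilon}\int_{F_{\epsilon,t}^v}K_{F_{\epsilon,t}^v}\,dV=\frac{\pi\omega_2}{2\omega_{2n-1}}\mathrm{Vol}(\mathbb{G}^{n-1}(\mathbb{C}^n))\cdot m_v(f)\,\Pi_\ell^*\cdot E_v.
$$

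Second, I plug in Theorem~\ref{thmA} applied to the morphism $\Phi=(\ell,f)$. Since $\ell$ is generic with respect to $\pi$, the inner rates $q_{\ell,i}^f$ coincide with the invariants $q_i^f$ of the statement, and Theorem~\ref{thmA} gives
$$
\Pi_\ell^*\cdot E_v=2g_v-2+\mathrm{val}_{\Gamma_\pi}(v)+f^*\cdot E_v-\sum_{i\in V(\Gamma_\pi)}m_i(f)q_i^fE_i\cdot E_v.
$$
Multiplying by $m_v(f)$ recovers exactly the constant $C_f$ from the statement, completing the identification.

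The main obstacle is the boundary analysis. Since $F_{\epsilon,t}^v$ has boundary lying on $\partial\mathrm{Horn}(\epsilon,v)$, the Langevin formula carries a boundary correction built from the geodesic curvature of $\partial F_{\epsilon,t}^v$ and the second fundamental form of $\partial\mathrm{Horn}(\epsilon,v)$, and one must also verify that no critical points of $\ell|_{F_{\epsilon,t}^v}$ escape through the lateral boundary as $\epsilon\to 0$. I would control these terms uniformly in $t$ by using the inclusion $\mathrm{Horn}(\epsilon,v)\subset B_\epsilon$, which forces $\partial F_{\epsilon,t}^v$ to shrink into the sphere of radius $\epsilon$; the asymptotic control on the inner metric of the Milnor fibers supplied by Theorem~\ref{thmB}; and the hypothesis that $\pi$ factors through the Nash transform relative to $f$, which tames the tangent planes of the Milnor fibers as they approach $E_v$ and prevents pathological curvature concentration along the horn boundary. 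Showing that these correction terms genuinely tend to $0$ in the shrinking-horn regime, rather than accumulating, is the technical heart of the proof.
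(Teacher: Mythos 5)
Your proposal is correct and follows essentially the same route as the paper: apply Langevin's exchange formula to $F_{\epsilon,t}^v$, use the definition of $\delta_{\epsilon}$ to identify the critical-point count with $m_v(f)\,\Pi_\ell^*\cdot E_v$ for generic $\ell$, and convert this via the inner rates formula (Theorem \ref{laplacien}) together with the independence of $q_{\ell,v}^f$ from the generic linear form (Theorem \ref{Milnor}). The boundary analysis you single out as the technical heart is not actually needed: the exchange formula invoked (Theorem \ref{langevinshifrin}) is a Crofton--Fubini identity on the incidence variety, valid verbatim for compact complex curves with boundary and carrying no geodesic-curvature or second-fundamental-form correction terms (it is not a Gauss--Bonnet statement), while the non-escape of intersection points with $\Pi_\ell$ is built into the very definition of $\delta_{\epsilon}$; this is why the paper can conclude directly from Theorems \ref{laplacien}, \ref{Milnor} and \ref{langevinshifrin}.
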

Theorem \ref{thmE} generalizes the work of Garc\'{\i}a Barosso and Teissier in \cite{GarciaBarrosoTeissier1999} which treat the case of germs of holomorphic functions at the origin of $\mathbb{C}^2$.

\subsection*{Acknowledgments} 
I would like to express my deep gratitude to my thesis advisors André Belotto and Anne Pichon for their help and enthusiastic encouragements during the  preparation of this paper. I would also like to thank Patricio Almirón for fruitful 
 conversations about polar curves and in particular for pointing out Example \ref{patricio}. This work has been supported by the \textit{Centre National de la Recherche Scientifique (CNRS)} which funds  my PhD  scholarship.

\tableofcontents

 \section{Resolution of curves and surfaces}\label{section1}
In this section we introduce some classical materials   as they are presented in the introductions of \cite{MaugendreMichel2017},\cite{Michel2008}  and \cite{BFP}.

\begin{defi}
Let $(X,0)$ be a complex surface germ with an isolated singularity. A \textbf{resolution} of $(X,0)$ is a proper bimeromorphic map $\pi : (X_{\pi},E) \longrightarrow (X,0)$ such that $X_{\pi}$ is a smooth  complex surface  and the restricted  function $\pi_{|X_{\pi} \backslash E}: X_{\pi} \backslash E  \longrightarrow X \backslash 0$  is a biholomorphism. The curve $E=\pi^{-1}(0)$ is called the \textbf{exceptional divisor}. \\ The resolution $\pi : (X_{\pi},E) \longrightarrow (X,0)$ is   \textbf{good}  if $E$ is a simple normal crossing divisor i.e., it has smooth compact irreducible components and the singular points of $E$ are  transversal double points. Let $E_v$ be an irreducible component of $E$. A \textbf{curvette}  of $E_v$ is a  smooth curve germ which intersects  $E_v$ transversely at a smooth point of $E$.   \end{defi}
\begin{defi}
Let $\pi : (X_{\pi},E) \longrightarrow (X,0)$ be a  resolution of $(X,0)$ and let $(C,0)$ be a curve germ  in $(X,0)$. The \textbf{strict transform} of $C$ by $\pi$ is the curve  $C^*$  in $X_{\pi}$ defined as the topological closure of the set $\pi^{-1}(C \backslash \{0\})$. Let $E_1,E_2,\ldots,E_n$ be the irreducible components of $E$ and $h:(X,0) \longrightarrow (\mathbb{C},0) $ be a holomorphic function.  The \textbf{total transform} of $h$ by $\pi$  is the principal divisor $(h \circ \pi)$ on $X_\pi$, i.e., 
$$ (h \circ \pi)= \sum_{i=1}^n m_i(h) E_i+h^*$$
where $m_i(h)$ is the order of vanishing  of the holomorphic function $h  \circ \pi $ on the irreducible component $E_i$ of $E$ and $h^*$ is the strict transform of the curve $h^{-1}(0)$.
\end{defi}\begin{prop}[{\cite[Theorem 2.6]{laufer1972} or \cite[2.6]{nem} for a topological proof}]\label{laufer}
Let $\pi : (X_{\pi},E) \longrightarrow (X,0)$ be a  resolution of $(X,0)$. Let $h:(X,0) \longrightarrow (\mathbb{C},0) $ be a holomorphic function, then we have the following: \begin{equation*} (h \circ \pi) \cdot E_v=0  , \ \ \forall v \in V(\Gamma_{\pi}) .\end{equation*} where "$\cdot$" denote the intersection multiplicity between curves.
\end{prop}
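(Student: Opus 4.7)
The plan is to exploit the fact that $h \circ \pi$ is a globally defined holomorphic function on $X_\pi$, so that $(h \circ \pi)$ is precisely the principal divisor of zeros of a global section of the structure sheaf $\mathcal{O}_{X_\pi}$. I would interpret the intersection number $(h\circ\pi) \cdot E_v$ via the line-bundle definition $D \cdot C := \deg\bigl(\mathcal{O}_{X_\pi}(D)|_C\bigr)$, which is well-defined on the smooth surface $X_\pi$ even when $C$ is a component of $D$; this is crucial here, since $E_v$ is generally a component of the support of $(h\circ \pi)$ (whenever $m_v(h) > 0$), so a naive set-theoretic count would not make sense.

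With this interpretation in place, the argument becomes immediate. Because $h \circ \pi$ is a global regular section with zero divisor $(h\circ\pi)$, it realizes a trivialization $\mathcal{O}_{X_\pi}\bigl((h\circ\pi)\bigr) \cong \mathcal{O}_{X_\pi}$ as line bundles. Restricting this trivialization to the compact curve $E_v$ yields $\mathcal{O}_{X_\pi}\bigl((h\circ\pi)\bigr)\big|_{E_v} \cong \mathcal{O}_{E_v}$, which has degree zero on $E_v$. Hence $(h \circ \pi) \cdot E_v = 0$, as desired.

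As a more geometric sanity check, I would verify the result by deformation: for $t$ in a small punctured disk around $0$, the fiber $h^{-1}(t) \subset X$ avoids the origin (since $h(0)=0$ and $h$ is continuous), so its preimage $\pi^{-1}\bigl(h^{-1}(t)\bigr)$ avoids the entire exceptional divisor $E$, in particular $E_v$. The divisor $(h\circ \pi - t)$ then shares no component with $E_v$ and has empty intersection with it, so $(h\circ\pi - t)\cdot E_v = 0$. Since $\{(h\circ\pi - t)\}_{t \in \mathbb{C}}$ is a flat family of effective divisors, the intersection number with the compact curve $E_v$ is locally constant in $t$, and letting $t \to 0$ gives the equality.

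The only delicate point is really the cohomological interpretation of intersection numbers: one must accept the general definition $D \cdot C := \deg(\mathcal{O}(D)|_C)$ on a smooth surface, rather than the naive count of intersection points, precisely because $C = E_v$ sits inside the support of $D = (h \circ \pi)$. Once this is in place, the content of the lemma collapses to the elementary observation that a principal divisor has trivial associated line bundle and therefore degree zero on any compact curve.
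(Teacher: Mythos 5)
Your main argument is correct and complete: since $h\circ\pi$ is a global holomorphic function on $X_\pi$ with divisor $(h\circ\pi)$, multiplication by it trivializes $\mathcal{O}_{X_\pi}\bigl((h\circ\pi)\bigr)$, and the degree of the trivial bundle on the compact curve $E_v$ is zero, which is exactly the intersection number in Mumford's sense even though $E_v$ lies in the support of the divisor. The paper itself gives no proof of this proposition — it is quoted from Laufer, with Némethi cited for a topological alternative — and your line-bundle argument is precisely the standard algebraic proof behind that citation (the same principle, that a principal divisor is numerically trivial on every compact exceptional curve, underlies Mumford's and Laufer's treatment), whereas the topological route would instead compute linking numbers in the link of the singularity. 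One caution on your ``sanity check'': in the germ setting the divisors $(h\circ\pi - t)$ are non-compact, so the claimed local constancy of $(h\circ\pi - t)\cdot E_v$ in $t$ is not automatic and would itself need an argument (e.g.\ again via degrees of line bundles restricted to $E_v$, at which point it adds nothing); since you present it only as a heuristic verification and your first argument stands on its own, this does not affect the validity of the proof.
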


\begin{defi}
Let $(C,0)$ be a curve germ in $(X,0)$.  A proper bimeromorphic  map $\pi : (X_{\pi},E) \longrightarrow (X,0)$ is  a \textbf{good resolution of $(X,0)$ and $(C,0)$} if it is a  good resolution of  $(X,0)$ such that
the strict transform $C^*$ is a disjoint union of curvettes.
\end{defi}
\begin{defi}
The \textbf{dual graph} of  a good resolution $\pi :(X_\pi,E) \longrightarrow (X,0) $ of $(X,0)$ is the graph $\Gamma_\pi$ whose vertices are in bijection with the irreducible  components of $E$ and  such that the  edges between  the vertices $v$ and $v'$  corresponding to $E_v$ and $E_{v'}$ are in bijection with   $E_v \cap E_{v'}$, each vertex $v$ of this graph is weighted with the self intersection number $E_v^2$ and the genus $g_v$ of the corresponding curve $E_v$. We denote by $V(\Gamma_{\pi})$ the set of vertices of $\Gamma_{\pi}$ and $E(\Gamma_{\pi})$ the set of edges. The \textbf{valency of a vertex $v$} is the number  $\mathrm{val}_{\Gamma_{\pi}}(v):=\left(\sum_{i \in V(\Gamma_{\pi}),  i \neq v }E_i  \right)\cdot E_v$. Let $\Phi=(g,f):(X,0) \longrightarrow (\mathbb{C},0)$ be a finite morphism  and let $E_v \subset E$ be an irreducible component  which meets the strict transform $g^*$ (resp. $f^*$), following  notations of \cite{MaugendreMichel2017}, we attach to the vertex $v$ corresponding to $E_v$ a going-out arrow (resp. a going-in arrow) weighted with the intersection number $g^* \cdot E_v$ (resp. $f^* \cdot E_v$).\end{defi}\begin{exam}\label{exemple1}
Consider the finite morphism $\Phi=(g,f):(\mathbb{C}^2,0) \longrightarrow (\mathbb{C}^2,0)$, where $g(x,y)=x+y$ and $f(x,y)=y^5-x^{12}$. The minimal good resolution $\pi:(X_{\pi},E) \longrightarrow (\mathbb{C}^2,0)$  of the curve $(gf)^{-1}(0)$ has the following dual graph: all the irreducible components of $E$  have genus $0$.

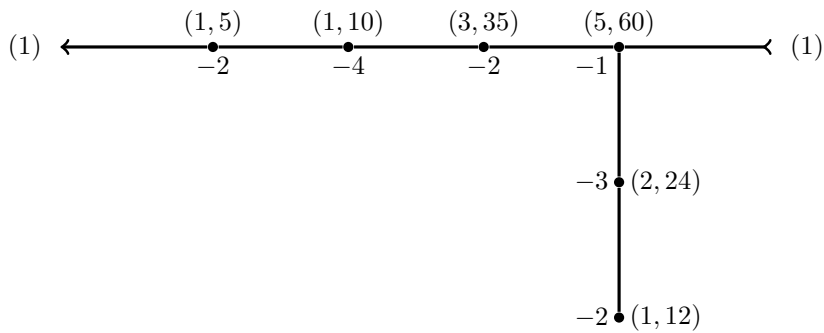
\begin{figure}[H]
\begin{tikzpicture}[node distance=4.5cm, very thick]
 \tikzstyle{titleVertex}      = [ shape=circle,node distance=4cm] \tikzstyle{inVertex}      = [ shape=circle,node distance=2.5cm]
\tikzstyle{Vertex}      = [fill, shape=circle, line cap=round,line join=round,>=triangle 45,scale=.4,font=\scriptsize]
  \tikzstyle{Edge}        = [black]
    \tikzstyle{arrowEdge}        = [black, ->]  
   \tikzstyle{arrowEdge'}        = [black, -<]    
     \tikzstyle{arrowEdge''}        = [red, ->]      
 
 \node[Vertex]      (1)               {};

     \node[Vertex]      (2)  [right       of=1] {};
  \node[Vertex]      (3)  [ right of=2]  {};
  \node[Vertex]      (4)  [right  of=3]  {};
  \node[Vertex]      (5)  [below        of=4] {};
  \node[Vertex]      (6)  [below       of=5] {};
    \node[inVertex] (0)  [ left     of=1]  {$(1)$};    
      \node[inVertex] (7)  [ right     of=4]  {$(1)$};   

          \path 
(1) edge[Edge] (2)
(2) edge[Edge] (3)
(3) edge[Edge] (4)
(4) edge[Edge] (5)
(4) edge[arrowEdge']  (7)
(1) edge[arrowEdge]  (0)
(5) edge[Edge] (6);

\draw (1) node[below] {$-2$}
          (2) node[below] {$-4$}
          (3) node [below]{$-2$}
           (4) node [below left ]{$-1$}
            (5) node [right]{$$}
             (6) node [right]{$$}
              (1) node[above] {$(1,5)$}
          (2) node[above] {$(1,10)$}
          (3) node [above]{$(3,35)$}
           (4) node [above]{$(5,60)$}
            (5) node [right]{$(2,24)$}
             (6) node [right]{$(1,12)$}
                 (5) node [left]{$-3$}
             (6) node [left]{$-2$};

                                      
  \end{tikzpicture} 
  \caption{The numbers between parenthesis are the orders of vanishing $(m_v(g),m_v(f))$ of  the functions $g \circ \pi $ and $f \circ \pi$ along the irreducible components of $E$, these numbers can be determined from the dual graph using Proposition \ref{laufer}.}
   \end{figure}




  \end{exam}

  \section{Inner rates of a finite morphism}\label{section2}

Let $(X,0)$  be a complex surface  germ with an isolated singularity and $g,f :(X,0) \longrightarrow (\mathbb{C},0)$  two holomorphic functions such that the morphism $\Phi=(g,f) :(X,0) \longrightarrow (\mathbb{C}^2,0)$ is finite. The aim of this section is to define  the notion of \textbf{inner rates}  associated to the morphism $\Phi=(g,f)$. This definition is a generalization of the  \textbf{inner rates of a complex surface germ with an isolated singularity} first introduced in \cite{BNP} and later in \cite[Definition 3.3]{BFP}.



\begin{defi} [{See e.g \cite[Definition 0.1]{KTP}}]  The \textbf{polar curve} of the morphism $\Phi $ is the curve $\Pi_{\Phi}$  defined as the topological closure of the critical locus of the finite morphism $\Phi=(g,f)$, that is
$$ \Pi_{\Phi} = \overline{  \{   x \in  X \backslash \{0\} \ | \ d_x \Phi : T_x X \longrightarrow \mathbb{C}^2 \ \text{ is non surjective} \}   }$$
\end{defi}

\begin{defi}(See e.g \cite[section 3]{BFP})
Given two functions germs $h_1,h_2 :([0,+\infty),0) \longrightarrow([0,+\infty),0)$, we say that $h_1$ is a \textbf{big-Theta} of $h_2$ and we write $h_1(t)=\Theta(h_2(t))$ if there exists  real numbers $\eta > 0$ and $K>0$ such that for all $t \in [0,\eta)$, 
$$ \frac{1}{K} h_2(t) \leq h_1(t) \leq K h_2(t)$$
\end{defi}


\begin{prop}\label{inner-rate}
Let $\pi :(X_{\pi},E) \longrightarrow (X,0)$ be a good resolution of $(X,0)$ and let $E_v$ be an irreducible component of the exceptional divisor $E$. Let us denote by $(u_1,u_2)=(g,f)$ the coordinates of $\mathbb{C}^2$ and by $\mathrm{d}$ the standard hermitian metric of $\mathbb{C}^2$. 
 Then there exists a positive rational number $q_{g,v}^f$ such that  for every smooth point $p$ of $E$  in $E_v \backslash (f^* \cup g^* \cup \Pi_{\Phi}^*)$, there exists an open neighborhood  $O_p \subset E_v$ of $p$ such that for every pair of curvettes $\gamma_{1}^*, \gamma_{2}^*$ of $E_v$ verifying:
 
\setcounter{equation}{0}
\renewcommand{\theequation}{2.\arabic{equation}}\begin{equation}\label{cond}
\left\{
\begin{array}{rcl}
 \gamma_1^* \cap \gamma_2^*&=&\emptyset \\
\gamma_{i}^* \cap O_p &\neq& \emptyset \ \text{for} \ i=1,2.
\end{array}
\right.
\end{equation}
we have: $$ \mathrm{d}(\gamma_1 \cap \{u_2 = \epsilon \}, \gamma_2 \cap \{u_2= \epsilon \} ) =\Theta(\epsilon^{q_{g,v}^f}),$$ where  $\gamma_1=(\Phi \circ \pi)(\gamma_1^*), \gamma_2=(\Phi \circ \pi) (\gamma_2^*)$ and $\epsilon \in \mathbb{R}$. Furthermore the number $m_v(f)q_{g,v}^f$ is an integer.
\end{prop}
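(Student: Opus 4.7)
The plan is to work in local analytic coordinates near $p$, reduce the distance in $\mathbb{C}^2$ to a Puiseux expansion problem, and identify the leading exponent.

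First I pick local coordinates $(x,y)$ on $X_\pi$ centered at $p$ such that $E_v=\{x=0\}$ locally. Since $p\notin f^*\cup g^*$, the pullbacks factor as $f\circ\pi=x^{m_v(f)}u(x,y)$ and $g\circ\pi=x^{m_v(g)}w(x,y)$ with $u,w$ analytic units at the origin. A direct computation shows that the Jacobian of $\Phi\circ\pi$ factors as $x^{m_v(g)+m_v(f)-1}J(x,y)$, so the hypothesis $p\notin\Pi_\Phi^*$ controls the behaviour of $J$ along $E_v$ near $(0,0)$. Next I parametrize a curvette $\gamma_i^*$ through a point $(0,y_{0,i})\in O_p$ as the graph $(t,y_i(t))$, invert the equation $t^{m_v(f)}u(t,y_i(t))=\epsilon$ by Puiseux series, and substitute each of the $m_v(f)$ solution branches into $g\circ\pi$. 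This produces Puiseux expansions in $\epsilon^{1/m_v(f)}$ for the $u_1$-coordinates of each of the points of $\gamma_i\cap\{u_2=\epsilon\}$, and the desired distance is the minimum-pair distance between these two finite sets.

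The central analytical task is to identify the order of the first non-vanishing term in the difference of these expansions. In the tame regime, the leading function $\phi(y):=w(0,y)\,u(0,y)^{-m_v(g)/m_v(f)}$ is nonconstant along $E_v$ near $p$, so the leading coefficients $\phi(y_{0,1})$ and $\phi(y_{0,2})$ differ (since $\phi'(y_p)$ is nonzero precisely by the polar condition), which gives exponent $m_v(g)/m_v(f)$. In the degenerate regime where $\phi$ is constant near $p$ (equivalently, $J$ vanishes identically on $E_v$ near $p$), the leading terms cancel simultaneously in both Puiseux series and the next-order coefficients enter; successive non-degeneracy conditions, again controlled by $p\notin\Pi_\Phi^*$, single out a strictly larger exponent. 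In each case, matching branches via roots of unity yields a positive rational of the form $k/m_v(f)$ with $k\in\mathbb{Z}_{>0}$, which gives both the $\Theta$-estimate and the integrality claim for $m_v(f)\,q_{g,v}^f$.

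The main obstacle will be to show that the resulting exponent depends only on $v$, not on $p$ or on the particular pair of curvettes through $O_p$. My plan is to identify the order of cancellation as an intrinsic invariant of $E_v$, for instance as the multiplicity along $E_v$ of a canonically defined divisor on $X_\pi$ associated to $\Phi\circ\pi$, so that shrinking $O_p$ and varying the curvettes preserves the exponent. Once this intrinsic interpretation is secured, the $\Theta$-bounds and the independence of curvettes follow from uniform control of the higher-order remainders in the Puiseux inversion.
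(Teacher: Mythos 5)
Your local analysis is essentially the paper's: coordinates $(x,y)$ with $E_v=\{x=0\}$, expansion of $g\circ\pi$ after factoring out $x^{m_v(g)}$, identification of the exponent as the order of the first curvette-dependent term, and the use of $p\notin\Pi_\Phi^*$ to guarantee that the linear-in-$y$ coefficient at that order is nonzero, whence the separation of distinct nearby curvettes and the integrality of $m_v(f)q_{g,v}^f$. Your verification that $\phi'(y)$ is a unit multiple of $J(0,y)$, so that in the ``tame'' case the polar condition is exactly $\phi'(y_p)\neq 0$, is correct. The two genuine differences are these. First, the paper normalizes the coordinates so that $f\circ\pi=x^{m_v(f)}$ exactly; then the fiber $\{u_2=\epsilon\}$ corresponds to $t=\epsilon^{1/m_v(f)}$ with no inversion, and \emph{all} curvette-dependence is concentrated in the unit $U$ of $g\circ\pi$, so the degenerate case reduces to reading off the minimal $k$ with $U(x,y)=g_0(x)+x^k\sum_{j\ge1}y^jg_j(x)$ and checking $g_1(0)\neq0$ via the Jacobian. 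In your version, keeping both units $u,w$, the curvette enters the Puiseux inversion of $t^{m_v(f)}u=\epsilon$ as well as $w$, so in the degenerate regime (where $\phi$ is constant) the ``next-order coefficients'' mix contributions from the inversion and from $w$; your phrase ``successive non-degeneracy conditions'' hides this, whereas in fact a single condition suffices once the computation is organized as in the paper (and one should also invoke finiteness of $\Phi$ to know that a curvette-dependent term appears at some finite order at all). Second, for independence of the point $p$, the paper argues by compactness: it chains finitely many overlapping neighborhoods along a path in $E_v\setminus(f^*\cup g^*\cup\Pi_\Phi^*)$ and compares exponents via curvettes through the overlaps. You instead propose to read the exponent off the multiplicity along $E_v$ of a canonical divisor attached to $\Phi\circ\pi$; this is viable (take the divisor of $(\Phi\circ\pi)^*(\mathrm{d}u_1\wedge \mathrm{d}u_2)$, whose multiplicity along $E_v$ is $q_{g,v}^fm_v(f)+m_v(f)-1$), it is arguably cleaner than the chaining argument, and it anticipates exactly the mechanism the paper uses later to prove the inner rates formula via adjunction — but in your write-up it is announced as a plan rather than carried out, so the identification of the local cancellation order with that intrinsic multiplicity at every admissible $p$ still needs to be written down. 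Finally, like the paper, you treat the distance between the two fibers through the matched branches and do not discuss whether unmatched branches could come closer; this is a shared imprecision rather than a defect specific to your approach.
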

\begin{defi}\label{inner}
We call  $q_{g,v}^f$ the \textbf{inner rate of $f$ with respect to $g$ along $E_v$}.
\end{defi}
Let us explain why the definition \ref{inner} is a generalization of the notion of inner rates of a complex surface germ with an isolated singularity as defined in \cite[Definition 3.3]{BFP}. Let $(X,0)$ be a complex surface germ embedded in  $\mathbb{C}^n$ with an isolated singularity and let $\pi:(X_\pi,E) \longrightarrow (X,0)$ be a good resolution which factors through the blowup of the maximal ideal and the \textit{Nash transform} (see  e.g \cite[Introduction]{Spivakovsky1990} for the definition of the Nash transform). Let $\ell_1$ and $\ell_2$ be two linear forms of $\mathbb{C}^n$ such that the projection $\ell=(\ell_1,\ell_2)$ is a generic projection in the sense of \cite[Subsection 2.2]{BFP}. For every irreducible component $E_v$ of $E$, by \cite[Lemma 3.2]{BFP}, the inner rate $q_{\ell_1, v}^{\ell_2}$ is equal to the inner rate of the complex surface germ $(X,0)$ associated with the component $E_v$.

\begin{proof}[Proof of Proposition \ref{inner-rate}]
Let $p$ be a smooth  point of $E_v$ which does not belong to  the strict transforms $f^*,g^*$ and $\Pi_{\Phi}^*$. Let $(x,y)$ be a local system of coordinates of $X_{\pi}$ centered at $p$ such that $E_v$ has  local  equation $x=0$ and such that $(f \circ \pi)(x,y)=x^{m_v(f)}$ .
Let  $U$ be the  unit of $\mathbb{C}\{ x,y\}$ such that $ (g \circ \pi)(x,y) = x^{m_v(g)}U(x,y)$. We can write 
$$ U(x,y)=\sum_{i \geq 0} a_{i0}x^i  + \sum_{j \geq 1}y^j \sum_{i \geq 0} a_{ij} x^i$$
Since $\Phi$ is a  finite morphism  the set $\{ i \geq 0 \ | \ \exists j > 0, a_{ij} \neq 0  \}$ is non empty. Let $k$ be its minimal element. Then,
$$ U(x,y)=g_{0}(x)  +x^k \sum_{j \geq 1}y^j g_{j}(x),$$
where  $g_{0}(x)=\sum_{i \geq 0} a_{i0}x^i$ and $g_{j}(x) =\sum_{i \geq 0} a_{ij} x^{i-k}$. Note that the set $\{ j > 0 \ | \ g_{j}(0) \neq 0  \}$ is non empty.

Setting $q_p:=\frac{m_v(g)+k}{m_v(g)}$, we then have:
\begin{eqnarray}\label{forme}
(g \circ \pi)(x,y) = x^{m_v(g)}g_{0}(x)+x^{q_pm_v(g)}\sum_{j \geq 1} y^j g_{j}(x).   \end{eqnarray}

\noindent Let $\gamma_{1}^*$ and $\gamma_{2}^*$ be two curvettes of $E_v$ parametrized respectively by
$$ t \mapsto (t,\alpha + th_1(t)), \ t \mapsto (t,\beta + th_2(t) ), \ \alpha,\beta \in \mathbb{C} $$
where $h_1$ and $h_2$ are convergent power series. The curves $\gamma_1= (\Phi \circ \pi)(\gamma_1^*) $ and $\gamma_2=(\Phi \circ \pi)(\gamma_2^*)$  are then parametrized respectively by
$$  t \mapsto (t^{m_v(g)}g_{0}(t)+t^{q_pm_v(g)}\sum_{j \geq1}(\alpha+
th_1(t))^j g_{j}(t) ,t^{m_v(f)})$$ and
$$  t \mapsto (t^{m_v(g)}g_{0}(t)+t^{q_pm_v(g)}\sum_{j \geq1}(\beta+
th_2(t))^j g_{j}(t) ,t^{m_v(f)}).$$
Therefore, for $\epsilon > 0$, we have $$\mathrm{d}(\gamma_1 \cap \{ u_2 = \epsilon \}, \gamma_2 \cap \{ u_2 =\epsilon\}) 
=\displaystyle\left\lvert \epsilon^{\frac{q_p m_v(g)}{m_v(f)}} \displaystyle\right\rvert H(\epsilon), $$ where 
$$H(\epsilon)=\displaystyle\left\lvert\sum_{j \geq 1 }\left( (\alpha+ \epsilon^{\frac{1}{m_v(f)}}h_1(\epsilon^{\frac{1}{m_v(f)}}))^j - (\beta + \epsilon^{\frac{1}{m_v(f)}}h_2(\epsilon^{\frac{1}{m_v(f)}}))^j \right) g_{j}(\epsilon^{\frac{1}{m_v(f)} } ) \displaystyle\right\rvert.$$ We need to prove that  $$ H(0)=\displaystyle\left\lvert \sum_{j \geq 1 }\left( \alpha^j - \beta^j \right) g_{j}(0) \displaystyle\right\rvert.$$is non zero $0$ when $\alpha$ and $\beta$ are distinct and in a small enough neighborhood of the origin of $\mathbb{C}$.
Let $j_0 > 0$ be the minimal element of the set $\{ j > 0 \ | \ g_{j}(0) \neq 0  \}$ then 
$$ H(0)=\displaystyle\left\lvert (\alpha^{j_0}-\beta^{j_0})g_{j_0}(0) + \sum_{j \geq 1, j \neq j_0 }\left( \alpha^j - \beta^j \right) g_{j}(0) \displaystyle\right\rvert.$$ Now, let us prove  that $j_0=1$. Let us compute in the coordinates $(x,y)$ the equation of the total transform of the polar curve.
 The jacobian matrix of $ \Phi \circ \pi$ is  $$\mathrm{Jac}( \Phi \circ \pi)(x,y) = \begin{pmatrix}
* & x^{q_{p} m_v(g)}(g_{1}(x)+2yg_{2}(x)+\ldots) \\
m_v(f) x^{m_v(f)-1} &0
\end{pmatrix}$$then $\mathrm{Det}(\mathrm{Jac}( \Phi \circ \pi)(x,y))=m_v(f) x^{q_{p} m_v(g)+m_v(f)-1}(g_{1}(x)+2yg_{2}(x)+\ldots)=0$ is the equation of the total transform of $\Pi_{\Phi}$. Since $g_{j_0}(0) \neq 0$ it follows that the equation of the  strict transform of the polar curve $\Pi_{\Phi}^*$ is    $$\sum_{j \geq 1 }  jy^{j-1} g_{j}(x) =0.$$
By hypothesis $p \notin \Pi_{\Phi}^*$ which means that $g_{1}(0) \neq 0$ and then $j_0 =1$. Thus
$$ H(0)=|(\alpha-\beta)g_{1}(0) + \sum_{j > 1}\left( \alpha^j - \beta^j \right) g_{j}(0) |.$$  If $\alpha$ and $\beta$ are distinct and in a small enough neighborhood of $0 \in \mathbb{C}$ we have $H(0) \neq 0$. 
With this, we can conclude that there exists a neighborhood $O_p$ of $p$ such that for every curvettes satisfying \eqref{cond}
$$\mathrm{d}(\gamma_1 \cap \{ u_2 = \epsilon \}, \gamma_2 \cap \{ u_2 =\epsilon\}) =\displaystyle\left\lvert \epsilon^{ \frac{q_p m_v(g)}{m_v(f)} } \displaystyle\right\rvert H(\epsilon)= \Theta( \epsilon^{\frac{m_v(g)q_p}{m_v(f)}} ).$$ Now we will see that the number $\frac{q_pm_v(g)}{m_v(f)}$ does not depend of the point $p$. Let $p'$ be another smooth point of $E$ in $E_v$  such that $p' \notin f^* \cup g^* \cup \Pi_{\Phi}^*$. Let $c:[0,1] \longrightarrow E_v  $   be a continuous  injective path which does not pass through $f^*$, $g^*$ or  $\Pi_{\Phi}^*$ such that $c(0)=p$ and $c(1)=p'$. By compacity of $E_v,$ we can  choose  a finite family of open sets $\{ O_{p_i} , i \in \{1,\ldots,s\}\}$ which covers the curve $c[0,1]$ and  such that for every pair of curvettes  $\gamma_{i,1}^*$ and $\gamma_{i,2}^*$ of $E_v$ passing through two different points of $O_{p_i}$ there exists $a_i \in \mathbb{R}$ which verifies
$$ \mathrm{d}(\gamma_{i,1} \cap \{ u_2 = \epsilon \}, \gamma_{i,2} \cap \{ u_2 =\epsilon\})=\Theta(\epsilon^{a_i}),$$ where $\gamma_{i,1}=\Phi_{\pi}(\gamma_{i,1}^*)$ and $\gamma_{i,2}=\Phi_{\pi}(\gamma_{i,2}^*)$.
We can suppose, even if it means refining and reordering the open cover $\{ O_i\}$, that $a_1=\frac{q_pm_v(g)}{m_v(f)}    ,a_s=\frac{q_{p'}m_v(g)}{m_v(f)}$   and that $O_{p_i} \cap O_{p_{i+1}} \neq \emptyset $ for $i =1,2,\ldots,s-1$.\\
Let $\gamma_{i,i+1}^*$ and $\gamma_{i,i+1}'^*$ be two curvettes passing through two different points of $O_{p_i} \cap O_{p_{i+1}}$, then, by definition of the open cover $\{O_{p_i}\}_i$ we have
$$ \mathrm{d}(\gamma_{i,i+1} \cap \{ u_2 = \epsilon \},  \gamma_{i,i+1}' \cap \{ u_2 =\epsilon\})=\Theta(\epsilon^{a_i})=\Theta(\epsilon^{a_{i+1}}),$$ where  $\gamma_{i,i+1}= \Phi \circ \pi(\gamma_{i,i+1}^*),\gamma_{i,i+1}'= \Phi \circ \pi(\gamma_{i,i+1}'^*).$ Finally, we get that $\frac{q_pm_v(g)}{m_v(f)}=a_1=a_2=\ldots=a_s=\frac{q_{p'}m_v(g)}{m_v(f)}:=q_{g,v}^f$. Furthermore $m_v(f) q_{g,v}^f=m_v(f) \frac{q_pm_v(g)}{m_v(f)}$  is an integer.
 \end{proof}
 \noindent The following remark will be used in the proofs of the inner rates formula \ref{thmA} and of  Lemma \ref{recurence}.
 \begin{rema}\label{remarque}
By equation \eqref{forme},  for any smooth point $p$ of $E$ in $E_v$ with $p \notin f^* \cup g^* \cup \Pi_{\Phi}^*,$ there exists a local system of coordinates $(x,y)$ centered at $p$ such that
 
 $$\left \{
\begin{array}{rcl}
(u_1 \circ  \Phi \circ \pi)(x,y)&=&x^{m_v(g)} g_{0}(x) +x^{ q_{g,v}^{f} m_v(f) }\sum_{j \geq 1 }  y^j g_{j}(x) \\
(u_2 \circ  \Phi \circ \pi)(x,y)&=&x^{m_v(f)}
\end{array}
\right.
$$ 
with  $g_{0}(0) \neq 0,g_1(0)\neq 0$.
 \end{rema}

\section{Inner rates formula}\label{section3}
In this section we  state and prove the main theorem of this article the \textbf{inner rates formula} \ref{thmA}. 
\begin{thm}[{The inner rates formula}] \label{laplacien} Let $(X,0)$ be a complex surface germ with an isolated singularity and let $\pi :(X_{\pi},E) \longrightarrow (X,0) $ be a good resolution of $(X,0)$. Let $g,f:(X,0) \longrightarrow (\mathbb{C},0)$ be two holomorphic functions on $X$ such that the morphism $\Phi=(g,f): (X,0) \longrightarrow (\mathbb{C}^2,0)$ is finite. Let $M_{\pi}=(E_{v_i} \cdot E_{v_j})_{i,j \in \{1,2,\ldots,n\}}$ be the \textbf{intersection matrix} of the dual graph $\Gamma_{\pi}$, $a_{g,\pi}^f:=(m_{v_1}{(f)}q_{g, v_1}^f,\ldots,m_{v_n}(f)q_{g, v_n}^f)$, $K_{\pi} :=( \mathrm{val}_{\Gamma_{\pi}} (v_1) +2g_{v_1}-2,\ldots,\mathrm{val}_{\Gamma_{\pi}} (v_n) +2g_{v_n}-2)$, $F_{\pi}=(f^* \cdot E_{v_1},\ldots,f^* \cdot E_{v_n} )$ be \textbf{the $F$-vector } and 
$P_{\pi}=(\Pi_{\Phi}^* \cdot E_{v_1},\ldots,\Pi_{\Phi}^* \cdot E_{v_n})$ be \textbf{the polar vector or $\mathcal{P}$-vector}. Then we have:
$$M_{\pi}  .\underline{a_{g,\pi}^f}=\underline{K_{\pi}}+\underline{F_{\pi}}-\underline{P_{\pi}}.$$
Equivalently, for each irreducible component $E_v$ of $E$ we have the  following:
$$
 \left( \sum_{i \in V(\Gamma_{\pi})} m_{i}(f)q_{g,i}^f E_i  \right) \cdot E_{v}= \mathrm{val}_{\Gamma_{\pi}}(v)+f^* \cdot E_v-\Pi_{\Phi}^* \cdot E_v+2g_v-2,
$$
\end{thm}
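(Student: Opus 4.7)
The plan is to identify the divisor of the holomorphic 2-form $\omega := d(g\circ\pi)\wedge d(f\circ\pi) = \pi^{*}(dg\wedge df)$ as a canonical divisor on $X_{\pi}$, then intersect with each $E_{v}$ and apply the classical adjunction formula, combined with Laufer's identity (Proposition \ref{laufer}).

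First, I would compute $\mathrm{div}(\omega)$ by analyzing its multiplicity along each codimension-one subvariety of $X_{\pi}$. At a generic smooth point of $E_{v}$ lying outside $f^{*}\cup g^{*}\cup \Pi_{\Phi}^{*}$, the coordinates provided by Remark \ref{remarque} give $f\circ\pi = x^{m_{v}(f)}$ together with the explicit form of $g\circ\pi$ recalled there, and a short Jacobian calculation (essentially the one already performed inside the proof of Proposition \ref{inner-rate}) yields
$$\omega \;=\; -\, m_{v}(f)\, x^{\,m_{v}(f)q_{g,v}^{f}+m_{v}(f)-1}\sum_{j\geq 1} j\, y^{j-1} g_{j}(x)\; dx\wedge dy.$$
Since $g_{1}(0)\neq 0$, this shows that $\omega$ vanishes to order $m_{v}(f)q_{g,v}^{f}+m_{v}(f)-1$ along $E_{v}$ and to order $1$ along $\Pi_{\Phi}^{*}$. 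A parallel analysis at a generic point of $f^{*}$ or $g^{*}$, using a local coordinate in which that curve is a regular parameter, shows that the multiplicity of $\omega$ along these strict transforms vanishes. Since a divisor on the smooth surface $X_{\pi}$ is determined by its generic multiplicities along its irreducible components, we conclude that
$$\mathrm{div}(\omega)\;=\;\sum_{w\in V(\Gamma_{\pi})}\bigl(m_{w}(f)q_{g,w}^{f}+m_{w}(f)-1\bigr)E_{w}\;+\;\Pi_{\Phi}^{*}.$$

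Because $\omega$ is a non-zero holomorphic section of the canonical sheaf $K_{X_{\pi}}=\Omega^{2}_{X_{\pi}}$, its divisor represents the canonical class, so applying the classical adjunction formula on the smooth curve $E_{v}\subset X_{\pi}$ yields
$$\mathrm{div}(\omega)\cdot E_{v}\;=\;K_{X_{\pi}}\cdot E_{v}\;=\;2g_{v}-2-E_{v}^{2}.$$
Expanding the left-hand side of this identity, invoking Proposition \ref{laufer} applied to $f$ to rewrite $\sum_{w} m_{w}(f)\,E_{w}\cdot E_{v} = -f^{*}\cdot E_{v}$, and using $\sum_{w} E_{w}\cdot E_{v} = E_{v}^{2}+\mathrm{val}_{\Gamma_{\pi}}(v)$, I would rearrange terms to isolate $\sum_{w} m_{w}(f)q_{g,w}^{f}\,E_{w}\cdot E_{v}$ and obtain the formula of the theorem.

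The main obstacle lies in Step 1: one must verify that the local multiplicity computation at generic points is sufficient to determine $\mathrm{div}(\omega)$ globally, and in particular that no additional contribution arises at the nodes $E_{v}\cap E_{v'}$ or at the finitely many intersection points of $E$ with the strict transforms of $f$, $g$, and $\Pi_{\Phi}$. This is standard — multiplicities at generic points determine a divisor on a smooth surface — but it requires a careful treatment of each type of crossing, together with the verification that the explicit local form of $\omega$ recovered from Remark \ref{remarque} really detects $\Pi_{\Phi}^{*}$ with multiplicity exactly $1$ at its generic points. Once $\mathrm{div}(\omega)$ is pinned down, Steps 2 and 3 reduce to a short bookkeeping exercise based on adjunction and Laufer's identity.
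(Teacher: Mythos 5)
Your proposal is correct and follows essentially the same route as the paper: both identify the divisor of the pulled-back form $\Omega=\Phi_\pi^{*}(du_1\wedge du_2)=d(g\circ\pi)\wedge d(f\circ\pi)$ as $\sum_{w}\bigl(m_w(f)q_{g,w}^{f}+m_w(f)-1\bigr)E_w+\Pi_\Phi^{*}$ via the local normal form of Remark \ref{remarque}, recognize it as a canonical divisor, apply adjunction to each $E_v$, and use Proposition \ref{laufer} to replace $\sum_w m_w(f)E_w\cdot E_v$ by $-f^{*}\cdot E_v$. The point you flag about the coefficient of $\Pi_\Phi^{*}$ is handled in the paper simply by the convention that the polar curve carries the divisorial structure of the Jacobian determinant (as in the proof of Proposition \ref{inner-rate}), so no extra verification is needed, and the paper's remark that $\Phi_\pi$ is a local isomorphism off $E\cup\Pi_\Phi^{*}$ plays the role of your generic-multiplicity bookkeeping at the remaining components.
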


\noindent As mentioned in the introduction, this result is a generalization of the "laplacian formula" proved by Belotto, Fantini and Pichon  \cite[Theorem 4.3]{BFP}. Their proof relies on topological tools. Here, we are going to prove the theorem \ref{laplacien} by using as main ingredient the classical  adjunction formula. That we state now. 
\begin{defi}[{See e.g \cite[Chapter 3. Subsection 6.3]{shafarevich}}]
Let $S$ be a smooth complex surface. The \textbf{canonical divisor} of $S$, denoted $K_S$, is the  divisor $ K_{\Omega}$ associated to any meromorphic $2$-form $\Omega$ defined on $S$. It is  well defined up to linear equivalence i.e.,  for any pair of $2$-forms $\Omega$ and $\Omega'$ of $S$ there exists a meromorphic function $h$ on $S$  such that $$ K_{\Omega}=(h)+K_{\Omega'}$$ where  $K_{\Omega}$ and $K_{\Omega'}$ are respectively the divisors associated to the $2$-forms $\Omega$ and $\Omega'$.
\end{defi}

\begin{thm}[{Adjunction formula, see e.g \cite[Chapter 4. Subsection 2.3]{shafarevich}}] \label{adj} Let $S$ be a complex surface and $C \subset S$ be  a compact Riemann surface embedded in $S$. Then 
$$ (K_S+C) \cdot C=2g_{C}-2, \ \ \  \text{where} \ g_{C}  \ \text{is the genus of } \ C.  $$

\end{thm}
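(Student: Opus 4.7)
The plan is to reduce the adjunction formula to the conormal exact sequence of $C$ in $S$ combined with the classical degree-genus formula $\deg(\omega_C) = 2g_C - 2$ for compact Riemann surfaces. The strategy is first to produce a natural isomorphism of line bundles on $C$ identifying $\omega_C$ with $(\omega_S \otimes \mathcal{O}_S(C))|_C$, and then to take degrees on $C$.

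First I would establish the conormal short exact sequence of locally free $\mathcal{O}_C$-modules
$$ 0 \longrightarrow \mathcal{I}_C/\mathcal{I}_C^2 \longrightarrow \Omega^1_S \otimes_{\mathcal{O}_S} \mathcal{O}_C \longrightarrow \Omega^1_C \longrightarrow 0, $$
where $\mathcal{I}_C \subset \mathcal{O}_S$ is the ideal sheaf of $C$. Exactness follows from the smoothness of $C$ inside the smooth surface $S$: locally, if $h$ generates $\mathcal{I}_C$, then $dh$ generates the conormal sheaf, its image in $\Omega^1_S|_C$ dies in $\Omega^1_C$, and the quotient is exactly $\Omega^1_C$. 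Since all three sheaves are locally free of ranks $1$, $2$, $1$, taking top exterior powers of the sequence gives an isomorphism
$$ \omega_C \cong \omega_S|_C \otimes \bigl(\mathcal{I}_C/\mathcal{I}_C^2\bigr)^{\vee}. $$
The conormal sheaf $\mathcal{I}_C/\mathcal{I}_C^2$ is canonically isomorphic to $\mathcal{O}_S(-C)|_C$ directly from the construction of the line bundle $\mathcal{O}_S(-C)$ as the subsheaf of $\mathcal{O}_S$ of sections vanishing on $C$; dualizing yields $\omega_C \cong \bigl(\omega_S \otimes \mathcal{O}_S(C)\bigr)\big|_C$.

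Second, I would conclude by taking degrees on the compact Riemann surface $C$, using three standard inputs: the degree-genus formula $\deg(\omega_C) = 2g_C - 2$; the identity $\deg(\mathcal{O}_S(D)|_C) = D \cdot C$ for any divisor $D$ on $S$ (this being the definition of the intersection number when $D$ and $C$ meet properly, extended by linear equivalence in general); and the additivity of degree under tensor product of line bundles. Combining these with the isomorphism above gives exactly $2g_C - 2 = (K_S + C) \cdot C$.

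The main subtlety, rather than a genuine obstacle, is that $K_S$ is defined only up to linear equivalence, so the identity $\deg(\omega_S|_C) = K_S \cdot C$ requires a representative meromorphic $2$-form whose associated divisor meets $C$ properly. Such a representative always exists, and the intersection number depends only on the linear equivalence class, so the final formula is intrinsic. Once this bookkeeping is in place, the adjunction formula is a pure degree computation on the compact Riemann surface $C$.
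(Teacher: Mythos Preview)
Your argument is the standard proof of the adjunction formula via the conormal exact sequence and is correct. Note, however, that the paper does not give its own proof of this statement: it is quoted as a classical result with a reference to Shafarevich, so there is no in-paper proof to compare against.
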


\begin{proof}[Proof of Theorem \ref{laplacien}]
Let us set $\Phi_{\pi}= \Phi \circ \pi $. Let $E_v$ be an irreducible component of $E$. Let us denote by $(u_1,u_2)=(g,f)$ the coordinates of $\mathbb{C}^2$. By remark \ref{remarque}, given a smooth point $p$ of $E$ in $E_v$  with $p \notin f^* \cup g^* \cup \Pi_{\Phi}^* $ there exists a  local system  coordinates $(x,y)$ centered at  $p$ such that $$\left \{
\begin{array}{rcl}
(u_1 \circ \Phi_\pi)(x,y)&=&x^{m_v(g)} g_{0}(x) +x^{ q_{g,v}^{f} m_v(f) }\sum_{j \geq 1 }  y^j g_{j}(x) \\
(u_2 \circ \Phi_\pi)(x,y)&=&x^{m_v(f)}
\end{array}
\right.
$$ with  $g_{0}(0) \neq 0$ and $g_{1}(0) \neq 0$.
Let us consider the holomorphic $2$-form $\omega$ on $\mathbb{C}^2$  defined by $\omega=\mathrm{d}u_1 \wedge \mathrm{d}u_2$. Let $\Omega :=\Phi_{\pi}^*\omega$ be the pullback of $\omega$ by the holomorphic function $\Phi_{\pi}$. In the neighborhood of $p$, the $2$-form $\Omega$ is given in the coordinates $(x,y)$ by: $$
\Omega=-m_v(f)x^{q_{g,v}^{f}m_v(f)+m_v(f)-1}\sum_{j \geq 1 }  jy^{j-1} g_{j}(x) \mathrm{d}x \wedge \mathrm{d}y.
$$ Since $\Phi_{\pi}= \Phi \circ \pi $  is a local  isomorphism on the complement of   $E  \cup \Pi_{\Phi}^*$ in $X_{\pi}$, the   $2$-form $\Omega$ does not vanish  on this set. Therefore, the canonical divisor $K_{X_{\pi}}$ of the smooth complex surface $X_{\pi}$ is represented by 
$$ K_{\Omega}= \sum_{i \in V(\Gamma_{\pi})} (q_{g,i}^{f}m_i(f)+m_i(f)-1)E_i+\Pi_{\Phi}^*. $$
We now apply the ajunction formula \ref{adj} to the compact Riemann surface  $E_v$  \begin{eqnarray}\label{adjunction}
(K_{\Omega}+E_v) \cdot E_v=2g_v-2.\end{eqnarray} Replacing $K_{\Omega} $ by its value in the equation \eqref{adjunction}, we obtain:  $$ \sum_{i \in V(\Gamma_{\pi})} q_{g,i}^{f}m_i(f) E_i \cdot E_v+ \sum_{i \in V(\Gamma_{\pi} )} m_i(f)E_i\cdot E_v -\sum_{i \in V(\Gamma_{\pi}), i \neq v }E_i \cdot E_v +\Pi_{\Phi}^* \cdot E_v=2g_v-2$$Finally, by Proposition \ref{laufer} we have: $$\left( \sum_{i \in V(\Gamma_{\pi} )} m_i(f)E_i\cdot E_v \right)=\left(-f^* \cdot E_v\right).$$ Replacing this in the previous equation, we get the desired equality: $$ \sum_{i \in V(\Gamma_{\pi})} q_{g,i}^{f}m_i(f) E_i \cdot E_v= \mathrm{val}_{\Gamma_{\pi}}(v) +f^* \cdot E_v-\Pi_{\Phi}^* \cdot E_v+2g_v-2.$$\end{proof}\begin{rema}
Since the intersection matrix of the dual graph $\Gamma_{\pi}$ associated with a good resolution $\pi:(X_{\pi},E) \longrightarrow (X,0)$ is negative definite. Theorem \ref{laplacien} implies that given the dual graph $\Gamma_{\pi}$ together with the $F$-vector  $(f^*\cdot E_1, \dots, f^*\cdot E_n)$, the inner rates $q_{g,v}^f, v \in V(\Gamma_{\pi})$ determines and are determined by the  $\mathcal{P}$-vector $(\Pi_{\Phi}^*\cdot E_1, \dots, \Pi_{\Phi}^* \cdot E_n)$.
\end{rema}

\noindent Let us give an example where we compute the inner rates using the theorem \ref{laplacien}.



\begin{exam}\label{exemple3.5}
Consider the case where $(X,0)=(\mathbb{C}^2,0)$. Let $f(x,y)=y^5-x^{12}$ and $g(x,y)=x+y$  and consider the finite morphism $\Phi=(g,f): (\mathbb{C}^2,0) \longrightarrow (\mathbb{C}^2,0) $. The polar curve  $\Pi_{\Phi}$ of this morphism  has equation $(\frac{\partial f }{\partial y}.\frac{\partial g }{\partial x})(x,y) -(\frac{\partial f }{\partial x}.\frac{\partial g }{\partial y})(x,y) =5y^{4}+12x^{11}=0$. Consider the minimal good resolution $\pi :(X_{\pi},E) \longrightarrow (\mathbb{C}^2,0)$  of the curve $(gf)^{-1}(0)$. We attach each vertex $v$ of $\Gamma_{\pi}$ corresponding  to an irreducible component $E_v$ which meet the strict transform of the polar curve with a red going-out arrow weighted with the intersection number $\Pi_{\Phi}^* \cdot E_v$.\begin{figure}[H]
\begin{tikzpicture}[node distance=3.8cm, very thick]
 \tikzstyle{titleVertex}      = [ shape=circle,node distance=4cm]
  \tikzstyle{inVertex}      = [ shape=circle,node distance=2cm]
\tikzstyle{Vertex}      = [fill, shape=circle, line cap=round,line join=round,>=triangle 45,scale=.4,font=\scriptsize]
  \tikzstyle{Edge}        = [black]
    \tikzstyle{arrowEdge}        = [black, ->]  
    \tikzstyle{arrowEdge'}        = [black, -<]    
    \tikzstyle{arrowEdge''}        = [red, ->]

 \node[Vertex]      (1)               {};

     \node[Vertex]      (2)  [right       of=1] {};
  \node[Vertex]      (3)  [ right of=2]  {};
  \node[Vertex]      (4)  [right  of=3]  {};
  \node[Vertex]      (5)  [below        of=4] {};
  \node[Vertex]      (6)  [below       of=5] {};
     \node[inVertex] (0)  [left     of=1]  {$(1)$};
 
          \node[inVertex] (8)  [below left     of=5]  {$\red{(1)}$};    
         \node[inVertex] (9)  [below left     of=6]  {$\red{(2)}$};    
                       \node[inVertex] (7)  [ right     of=4]  {$(1)$};

          \path 
            (1) edge[Edge] (2)
      (2) edge[Edge] (3)
             (3) edge[Edge] (4)
               (4) edge[Edge] (5)
                (1) edge[arrowEdge]  (0)
                               (4) edge[arrowEdge']  (7)
                                   (5) edge[arrowEdge'']  (8)
                                           (6) edge[arrowEdge'']  (9)

                                                                      (5) edge[Edge] (6);
\draw (1) node[below] {$-2$}
(0) node[below right] {$$}
          (2) node[below] {$-4$}
          (3) node [below]{$-2$}
           (4) node [below left ]{$-1$}
            (5) node [right]{$$}
             (6) node [right]{$$}
              (1) node[above] {$v_1(5)$}
          (2) node[above] {$v_2(10)$}
          (3) node [above]{$v_5(35)$}
           (4) node [above]{$v_6(60)$}
            (5) node [right]{$v_4(24)$}
             (6) node [right]{$v_3(12)$}
                 (5) node [left]{$-3$}
             (6) node [left]{$-2$};

  \end{tikzpicture}

  \caption{The  graph $\Gamma_{\pi}$, decorated with the orders of vanishing of  the function $f \circ \pi $  and red arrows corresponding to the components of the polar curve weighted with the intersection numbers.}
  \end{figure}
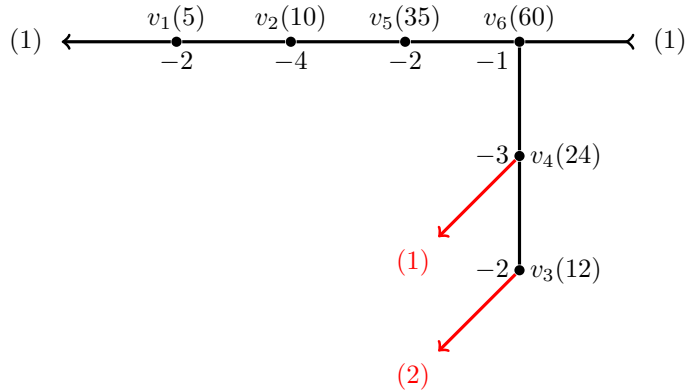

 For $ i=1,\dots 6$, denote $q_i := q_{g,v_i}^f$. By applying Theorem \ref{laplacien} on the dual graph $\Gamma_{\pi}$ we get the following system of equations:

$$ \begin{pmatrix}
-2 & 1 & 0 & 0 & 0 & 0 \\
1 & -4 & 0 & 0 & 1 & 0 \\
0 & 0  & -2 & 1 & 0 & 0\\
0 & 0 & 1 & -3 & 0 & 1 \\
0 & 1 & 0 & 0 & -2 & 1  \\
0 & 0 & 0 & 1 & 1 &-1
\end{pmatrix} 	\cdot \begin{pmatrix}
5 q_1 \\
10 q_2 \\
12 q_3 \\
24 q_4 \\
35 q_5 \\
60 q_6

\end{pmatrix} 	=   \begin{pmatrix}
1-2 \\
2-2 \\
1-2 \\
2-2 \\
2-2 \\
2-2

\end{pmatrix} +\begin{pmatrix}
0 \\
0 \\
0 \\
0 \\
0\\
1

\end{pmatrix} - \begin{pmatrix}
0 \\
0 \\
2 \\
1\\
0 \\
0

\end{pmatrix},$$

whose solution is: $$(q_1,q_2,q_3,q_4,q_5,q_6)=(\frac{1}{5},\frac{1}{10} ,\frac{1}{4},\frac{1}{8},\frac{3}{35},\frac{1}{12})$$

\begin{figure}[H]
\begin{tikzpicture}[node distance=5cm, very thick]

 \tikzstyle{titleVertex}      = [ shape=circle,node distance=4cm]

  \tikzstyle{inVertex}      = [ shape=circle,node distance=1.5cm]
\tikzstyle{Vertex}      = [fill, shape=circle, line cap=round,line join=round,>=triangle 45,scale=.4,font=\scriptsize]
  \tikzstyle{Edge}        = [black]
    \tikzstyle{arrowEdge}        = [black, ->]

 \node[Vertex]      (1)               {};

     \node[Vertex]      (2)  [right       of=1] {};
  \node[Vertex]      (3)  [ right of=2]  {};
  \node[Vertex]      (4)  [right  of=3]  {};
  \node[Vertex]      (5)  [below        of=4] {};
  \node[Vertex]      (6)  [below       of=5] {};

                                    \path 
            (1) edge[Edge] (2)
      (2) edge[Edge] (3)
             (3) edge[Edge] (4)
               (4) edge[Edge] (5)
              
                   (5) edge[Edge] (6);
\draw (1) node[above] {$(\frac{1}{5})$}
          (2) node[above] {$(\frac{1}{10})$}
          (3) node [above]{$(\frac{3}{35})$}
           (4) node [above]{$(\frac{1}{12})$}
           (5) node [left]{$$}
                       (5) node [right]{$(\frac{1}{8})$}
             (6) node [left]{$$}
  (1) node[below] {$$}
          (2) node[below] {$$}
          (3) node [below]{$$}
           (4) node [below left ]{$$}
                (5) node [ left]{$$}
           (6) node [ left ]{$$}
             (6) node [right]{$(\frac{1}{4})$};                                  
  \end{tikzpicture} \caption{The  dual graph weighted with the inner rates $q_i$.} 
  \end{figure}
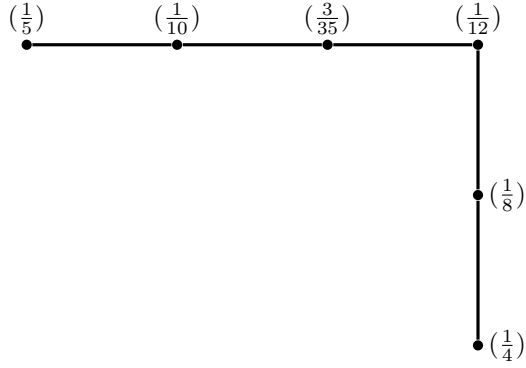
  
  \end{exam}
  \section{Non-archimedean link and metric dual graph}\label{section4}
 In this section we will  follow closely   \cite[Preliminaries]{BFP}. Let $(X,0)$ be a complex surface germ with an isolated singularity. Denote by $\mathcal{O}=\widehat{\mathcal{O}_{X,0}}$ the completion of the local ring of $X$ at $0$ with respect to its maximal ideal.
\begin{defi}
A (rank $1$) \textbf{semivaluation} on $\mathcal{O}$ is a map $v:\mathcal{O} \longrightarrow \mathbb{R} \cup \{ +\infty \} $ such that, for every $f,g \in \mathcal{O}$ and every $\lambda \in \mathbb{C}^{\times}$

\begin{itemize}

\item $v(fg)=v(f)+v(g)$
\item$ v(f+g) \geq \mathrm{min}\{v(f),v(g)\}$
\item $v(\lambda)=\left\{\begin{array}{@{}l@{}}
    + \infty \ \text{if} \ \lambda=0\\
    
     0 \ \ \ \ \text{if} \ \lambda \neq 0
  \end{array}\right.\,$.
\end{itemize}  A \textbf{valuation} is  semivalutation such that $0$ is the only element sent to $+\infty$. \end{defi} 

\begin{exam}  Let $\pi: (X_\pi,E) \longrightarrow (X,0)$ be a good resolution of $(X,0)$. Let $E_v$ be an irreducible component of the exceptional divisor $E$. Let $\mathfrak{M}$ be the maximal ideal of $\mathcal{O}$ and set $m_v(\mathfrak{M})=\mathrm{inf}\{ m_v(f) \ | \ f \in \mathfrak{M}\}$. The map

 $$\begin{array}{rcl}
\mathrm{val}_{E_v}:\mathcal{O}&\to& \mathbb{R}_{+} \cup \{+ \infty \},\\
f &\mapsto &\frac{m_v(f)}{m_v(\mathfrak{M})}
\end{array} $$   is a valuation on $\mathcal{O}$. We call it the \textbf{divisorial valuation} associated with $E_v$.
\end{exam}

\begin{defi}\label{linknonarchimedien}
The \textbf{non-archimedean link} $\mathrm{NL}(X,0)$ of $(X,0)$ is the set $$ \mathrm{NL}(X,0) = \{ v: \mathcal{O} \longrightarrow \mathbb{R}_{+} \cup \{+\infty\}  \ \text{semi-valuation} \ | \ v(\mathfrak{M}) =1 \ \text{and} \ v_{|\mathbb{C}^* } =0  \} $$
whose topology is induced from the product topology $(\mathbb{R}_{+} \cup \{+ \infty \})^{\mathfrak{M}}$.
\end{defi}
Let $\pi: (X_\pi,E) \longrightarrow (X,0)$ be a good resolution of $(X,0)$. There exists an embedding 
$$i_{\pi} :\Gamma_{\pi} \longrightarrow \mathrm{NL}(X,0)$$ and a continuous retraction $$r_{\pi} :\mathrm{NL}(X,0) \longrightarrow \Gamma_{\pi}$$ such that $r_{\pi} \circ i_{\pi} = \mathrm{Id}_{\Gamma_{\pi}}$. The embedding $i_{\pi}$ maps each vertex $v$ of $\Gamma_{\pi}$ to the divisorial valuation associated with the component $E_v$, and each edge $e_{v,v'}$ that corresponds to a point $p$ of the intersection $E_v \cap E_{v'}$ to the set of monomial valuations on $X_{\pi}$ at $p$. We refer to \cite[Preliminaries]{BFP}. 


 \begin{thm}[{See e.g., \cite[Theorem 2.27]{GignacRuggiero2017} and \cite[ Theorem 7.9]{Jonsson2015}}] \label{universaldualgraph}
 The family of the continuous retractions $\{ r_{\pi} \ | \ \pi \ \text{is a good resolution of} \ (X,0) \}$ induces a natural homeomorphism from $\mathrm{NL}(X,0)$ to the inverse limit of the dual graphs $\Gamma_{\pi}$
$$\mathrm{NL}(X,0) \cong \varprojlim_{ \pi } \Gamma_{\pi}. $$
 \end{thm}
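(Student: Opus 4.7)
The plan is to use the universal property of the inverse limit together with a compactness argument. First I would verify the coherence of the system of retractions: if $\pi'$ dominates $\pi$ (so $\pi'$ factors through $\pi$), the map $\rho_{\pi,\pi'} := r_{\pi} \circ i_{\pi'} : \Gamma_{\pi'} \to \Gamma_{\pi}$ is a continuous simplicial retraction, and one checks $r_{\pi} = \rho_{\pi,\pi'} \circ r_{\pi'}$ by evaluating both sides on the dense set of divisorial valuations in $\mathrm{NL}(X,0)$ and extending by continuity. Since every pair of good resolutions is dominated by a common one, the family $(r_{\pi})$ induces by the universal property of the inverse limit a natural continuous map $\Phi : \mathrm{NL}(X,0) \to \varprojlim_{\pi} \Gamma_{\pi}$.

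Next I would prove that $\Phi$ is bijective. For injectivity, if $v_1 \neq v_2$ in $\mathrm{NL}(X,0)$ then some $f \in \mathfrak{M}$ satisfies $v_1(f) \neq v_2(f)$; take a good resolution $\pi$ such that the total transform of $f^{-1}(0)$ has SNC with $E$. On such a $\pi$, the value $v_i(f)$ depends only on the retract $r_{\pi}(v_i) \in \Gamma_{\pi}$, since a quasi-monomial valuation centered at a point of $E$ evaluates monomials linearly in its barycentric coordinates on the edge. Thus $r_{\pi}(v_1) \neq r_{\pi}(v_2)$, and in particular $\Phi(v_1) \neq \Phi(v_2)$. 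For surjectivity, let $(x_{\pi})_{\pi}$ be a compatible family and define, for each $f \in \mathfrak{M}$, the quantity $v(f) := i_{\pi_f}(x_{\pi_f})(f)$ where $\pi_f$ is any good resolution making the total transform of $f^{-1}(0)$ SNC with $E$; the compatibility of the family together with the coherence of the maps $\rho_{\pi,\pi'}$ ensures that this is independent of the choice of $\pi_f$. A local computation on a common good resolution $\pi$ resolving both $f$ and $g$ simultaneously then shows $v(fg) = v(f) + v(g)$ and $v(f+g) \geq \min\{v(f), v(g)\}$, so $v$ is a semivaluation; the normalization axioms $v(\mathfrak{M}) = 1$ and $v_{|\mathbb{C}^{\times}} = 0$ are immediate, so $v \in \mathrm{NL}(X,0)$ and by construction $\Phi(v) = (x_{\pi})_{\pi}$.

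Finally, I would conclude by a compactness argument: $\mathrm{NL}(X,0)$ is compact, being a closed subset of the Tychonoff product $(\mathbb{R}_{+} \cup \{+\infty\})^{\mathfrak{M}}$ of compact intervals (the semivaluation axioms and the normalization conditions are defined by closed conditions in the product topology); the inverse limit $\varprojlim_{\pi} \Gamma_{\pi}$ is Hausdorff, being an inverse limit of Hausdorff spaces; and a continuous bijection from a compact space onto a Hausdorff space is automatically a homeomorphism. The main obstacle I expect is the surjectivity step: one must verify that the formula $v(f) := i_{\pi_f}(x_{\pi_f})(f)$ actually produces a well-defined semivaluation satisfying additivity on products and the ultrametric inequality. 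The key geometric input is the ability, for any finite collection of functions $f_1, \ldots, f_k \in \mathfrak{M}$, to find a single good resolution $\pi$ on which every total transform $(f_i \circ \pi)$ is SNC with $E$, which reduces both axioms to the already understood behavior of quasi-monomial valuations on $X_\pi$.
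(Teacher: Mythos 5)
The paper does not actually prove this statement: it is quoted with references to Gignac--Ruggiero and Jonsson, so there is no internal proof to compare with. Your architecture (coherence of the retractions, a continuous map to the inverse limit via the universal property, bijectivity, then compact-to-Hausdorff) is the standard one, and the coherence and compactness steps are fine. The genuine gap sits in the two middle steps, and it has a single source: you implicitly treat every point of $\mathrm{NL}(X,0)$ as quasi-monomial, assuming that $v(f)$ can be read off from $r_\pi(v)$ on one fixed good resolution adapted to $f$. But $\mathrm{NL}(X,0)$ as defined here consists of all normalized semivaluations, so it contains curve semivaluations (with $v(f)=+\infty$ for suitable $f$) and, more generally, points whose center on $X_\pi$ lies on the strict transform $f^*$; for these the identity $v(f)=r_\pi(v)(f)$ fails even when $\pi$ resolves $f$.

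Concretely, for injectivity: take $(X,0)=(\mathbb{C}^2,0)$, $f=y$, and $\pi$ the blowup of the origin, which is a good resolution of $(X,0)$ and of $f^{-1}(0)$. Let $v_1$ be the curve valuation along $\{y=x^2\}$ normalized by $v_1(\mathfrak{M})=1$, and $v_2=\mathrm{ord}_0$. Then $v_1(y)=2\neq 1=v_2(y)$, yet $r_\pi(v_1)=r_\pi(v_2)$ is the unique vertex of $\Gamma_\pi$; the same happens with $v_1$ the curve semivaluation along $\{y=0\}$, for which $v_1(y)=+\infty$. So the step ``on such a $\pi$ the value $v_i(f)$ depends only on $r_\pi(v_i)$'' is false, and your separation argument does not go through as written. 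For surjectivity the same phenomenon makes your map ill-defined: for the coherent family $x_\pi=r_\pi(v_C)$ with $v_C$ the curve semivaluation along $\{y=0\}$, both the blowup of the origin and its further blowup at $E_0\cap\{y=0\}^*$ are good resolutions of $f^{-1}(0)$, but they give $i_\pi(x_\pi)(y)=1$ and $2$ respectively; the values increase without bound along the tower, which is exactly how the inverse limit encodes $v_C(y)=+\infty$, so the claimed independence of the choice of $\pi_f$ fails. The standard repair for both steps is the monotone formula $v(f)=\sup_\pi r_\pi(v)(f)$: injectivity because two semivaluations with equal retractions on every $\pi$ then agree on every $f$, and surjectivity by defining $v(f):=\sup_\pi i_\pi(x_\pi)(f)$ (the net is nondecreasing by coherence) and checking that the semivaluation axioms and the normalization survive this monotone limit. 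With that replacement, your final step (closedness in the Tychonoff product, Hausdorff target, continuous bijection from a compact space) does close the proof.
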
 

\noindent Let $\Phi=(g,f): (X,0) \longrightarrow (\mathbb{C}^2,0)$ be a finite morphism. It induces a natural morphism $\widetilde{\Phi} : \mathrm{NL}(X,0) \longrightarrow (\mathbb{C}^2,0)$. Indeed, we set
$\Phi^{\#}:\widehat{ \mathcal{O}_{(\mathbb{C}^2,0)}} \longrightarrow \widehat{ \mathcal{O}_{(X,0)}}$ defined by  $\Phi^{\#} (h)=h \circ \Phi$. Hence

$$  \begin{array}{rcl}
\widetilde{\Phi}:\mathrm{NL}(X,0)&\to&  \mathrm{NL}(\mathbb{C}^2,0).\\
v &\mapsto &v \circ \Phi^{\#}
\end{array} $$ By using the description of $\mathrm{NL}(X,0)$ as inverse limit of dual graphs (Theorem \ref{universaldualgraph}), we have a way to compute the image by $\widetilde{\Phi}$ of a divisorial valuation $v$: 
\begin{enumerate}
\item Take a good resolution $\pi: (X_{\pi},E) \longrightarrow (X,0)$ such that $v$ is associated to some exceptional component $E_v$ of $E$.
\item Let $p$ be a smooth point of $E$ such that $p$ does not belong to $f^* \cup g^* \cup \Pi_{\Phi}^*$.
\item Take two curvettes $\gamma_1^*$ and $\gamma_2^*$ of $E_v$ which passes through two distinct points of $E_v$ which are in a neighborhood $O_p$ of $p$.
\item Consider the curves $\gamma_1=\Phi \circ \pi (\gamma_1^*)$ and $\gamma_2=\Phi \circ \pi (\gamma_2^*)$.
\item Let  $\sigma:(Y,F) \longrightarrow (\mathbb{C}^2,0)$ be the minimal sequence of blowups such that the strict transforms of $\gamma_1$ and $\gamma_2$ are smooth and meet transversely an irreducible component $F_{w} \subset F$ at two different points. The morphism $\sigma$ and the exceptional component $F_w$ do not depend of the choice of the curvettes $\gamma_1^*$ and $\gamma_2^*$ as a consequence of Proposition \ref{inner-rate}.
\item Finally, $\widetilde{\Phi}(v)=w$ where $w$ is the divisorial valuation  of $\mathrm{NL}(\mathbb{C}^2,0)$ associated to $F_{w}$.
\end{enumerate}
 Since $\Phi$ is finite, the map $\widetilde{\Phi}$ is a ramified covering.  It is reflected at the level of the dual graphs as follows. Let $\pi:(X_{\pi},E) \longrightarrow (X,0)$ be a good resolution of $(X,0)$ and $\sigma:(Y,F) \longrightarrow (\mathbb{C}^2,0)$ be a sequence of blowups of  the origin of $\mathbb{C}^2$ such that for any divisorial valuation $v$ associated to an irreducible component $E_v$ of $E$,  the induced divisorial valuation $\widetilde{\Phi}(v)=w$ is associated to an irreducible component $F_{w}$ of $F$. Then the set $\{\widetilde{\Phi}(\mathrm{val}_{E_v}) \ | \ E_v \subset E   \}$ defines a subgraph of $\Gamma_{\sigma}$ and the restriction of $\widetilde{\Phi}$ to $\Gamma_{\pi}$ is a ramified covering of graphs onto its image.

Let  $f:(X,0) \longrightarrow (\mathbb{C},0)$ be a holomorphic function germ. We now define a metric on $\mathrm{NL}(X,0)$ compatible with $f$ following \cite[Subsection 2.3]{BFP}. We start by doing it on any graph $\Gamma_{\pi}$ where $\pi:(X_{\pi},E) \longrightarrow (X,0)$ is a good resolution.  Let us endow the dual graph  $\Gamma_{\pi}$ with a metric by declaring the length of an edge $e_{v,v'}$ to be\begin{equation*}\mathrm{length}_f(e_{v,v'})=\frac{1}{m_v(f)m_{v'}(f)}.\end{equation*}(See \cite[Remark 2.5]{BFP} for a geometric interpretation of this metric in terms of the Milnor fibers of $f$).

Now, let $p$ be an intersection  point between two irreducible components  $E_v$ and $E_{v'}$ of $E$ and let $\tilde{\pi}:(X_{\tilde{\pi}},\tilde{E}) \longrightarrow (X,0)$ be the good resolution obtained by blowing up the point $p$. Then the exceptional component $E_w$ that arises has multiplicity $m_w(f)=m_v(f)+m_{v'}(f)$ . Since $\frac{1}{m_w(f)m_v(f)}+\frac{1}{m_w(f)m_{v'}(f)}=\frac{1}{m_v(f)m_{v'}(f)}$,  the inclusion of $\Gamma_{\pi}$ in $\Gamma_{\tilde{\pi}}$ is an isometry. Therefore, by passing to the limit, the metric on the dual graphs $\Gamma_{\pi}$  defines a metric on the non-archimedean link $\mathrm{NL}(X,0)$ called  the \textbf{skeletal metric on $\mathrm{NL}(X,0)$} with respect to $f$.
\begin{exam}
Consider again the dual graph $\Gamma_{\pi}$ of the minimal good resolution $\pi$ of the curve of equation $f(x,y)=y^5-x^{12}=0$ in $\mathbb{C}^2$ introduced in Example \ref{exemple1}. The following  figure shows  $\Gamma_{\pi}$  decorated with the order of vanishing of the function $f \circ \pi$ and the length of each of its edges.

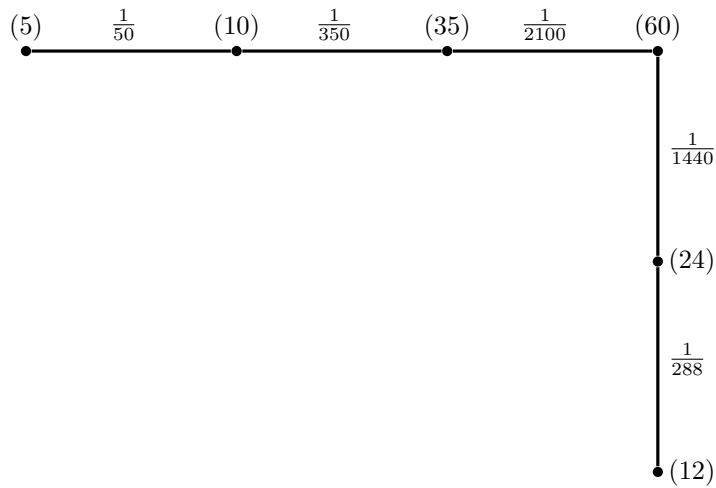
\begin{figure}[H]
\begin{tikzpicture}[node distance=7cm, very thick]
 \tikzstyle{titleVertex}      = [ shape=circle,node distance=4cm] 
 \tikzstyle{inVertex}      = [ shape=circle,node distance=1.5cm]
  \tikzstyle{lengthVertex}      = [ shape=circle,node distance=1.3cm]
  \tikzstyle{Vertex}      = [fill, shape=circle, line cap=round,line join=round,>=triangle 45,scale=.4,font=\scriptsize]
  \tikzstyle{Edge}        = [black]
    \tikzstyle{arrowEdge}        = [black, ->]

 \node[Vertex]      (1)               {};

     \node[Vertex]      (2)  [right       of=1] {};
  \node[Vertex]      (3)  [ right of=2]  {};
  \node[Vertex]      (4)  [right  of=3]  {};
  \node[Vertex]      (5)  [below        of=4] {};
  \node[Vertex]      (6)  [below       of=5] {};
  \node[lengthVertex] (8)  [right     of=1]  {}; 
  \node[lengthVertex] (9)  [right     of=2]  {}; 
  \node[lengthVertex] (10)  [right     of=3]  {}; 
    \node[lengthVertex] (11)  [below     of=4]  {}; 
\node[lengthVertex] (12)  [below     of=5]  {};

        \path 
            (1) edge[Edge]  (2)
      (2) edge[Edge] (3)
             (3) edge[Edge] (4)
               (4) edge[Edge] (5)
               (5) edge[Edge] (6);
\draw (1) node[above] {$(5) $}
          (2) node[above] {$(10)$}
          (3) node [above]{$(35)$}
           (4) node [above]{$(60)$}
            (5) node [right]{$$}
             (6) node [right]{$$}
                 (5) node [right]{$(24)$}
                        
            (6) node [right]{$(12)$}
              (8) node[above]{$\frac{1}{50}$}
    (9) node[above]{$\frac{1}{350}$}
    (10) node[above]{$\frac{1}{2100}$}
                (11) node[right]{$\frac{1}{1440}$}
                (12) node[right]{$\frac{1}{288}$};                 
                  \end{tikzpicture} 
                  \caption{The graph $\Gamma_{\pi}$  decorated with the orders of vanishing of the function $f \circ \pi$ and the length of each of its edges.}   \end{figure}
  
  \end{exam}

  With the metric defined before on $\Gamma_{\pi}$ we then have a metric graph
  $$ \Gamma_{\pi}=\left(V(\Gamma_{\pi}),E(\Gamma_{\pi}), \begin{array}{rcl}
\mathrm{length}_{f}:E(\Gamma_{\pi}) &\to& \mathbb{Q}_{+} \\
e_{v v'} &\mapsto &\mathrm{length}(e_{v v'})=\frac{1}{m_v(f)m_{v'}(f)}
\end{array} \right)$$

\begin{defi}\label{piecewise}
A function $F:\Gamma_{\pi} \longrightarrow \mathbb{R}$ is  \textbf{piecewise linear} if $F$ is a continuous piecewise affine map with integral slopes with respect to the metric induced by $\mathrm{length}_{f}$ and $F$ has only finitely many points of non-linearity on each edge of $\Gamma_{\pi}$.
\end{defi}A divisor $D= \sum_{v \in \Gamma_{\pi}} a_v v$ of $\Gamma_{\pi}$ is a finite sum of points of $\Gamma_{\pi}$ with integral coefficients $a_v \in \mathbb{Z}$.  We denote by $\mathrm{Div}(\Gamma_{\pi})$ the group of divisors of $\Gamma_{\pi}$. 






\begin{defi}\label{metricgraph}
Let $F$ be a piecewise linear map on the dual graph $\Gamma_{\pi}$. The \textbf{Laplacian} of $F$ is the divisor $\Delta_{\Gamma_{\pi}}(F)$ of $\Gamma_{\pi}$ whose coefficient at a point $v$ of $\Gamma_{\pi}$ is the sum of \textbf{the outgoing slopes} of $F$ at $v$: $$ \Delta_{\Gamma_{\pi}}(F)= \sum_{v \in V(\Gamma_{\pi}) }\left( \sum_{v' \neq v} \frac{F(v')-F(v)}{\mathrm{length}_{f}(v',v)} (E_v \cdot E_{v'})\right)  v. $$
\end{defi}

\section{The inner rates function}\label{section5}
Let $(X,0)$ be a complex surface germ with an isolated singularity and let $g,f :(X,0) \longrightarrow (\mathbb{C},0)$ be two holomorphic functions such that the morphism $\Phi=(g,f) :(X,0) \longrightarrow (\mathbb{C}^2,0)$ is finite. The aim of this section is to define a continuous function on the non archimedean link  whose values are the inner rates on the divisorial elements of $\mathrm{NL}(X,0)$.

\noindent Now, let us state the main proposition of this section which is a direct adaptation of \cite[Lemma 3.8]{BFP}.
\begin{prop}\label{inner-rate function} Denote by $(u_1,u_2)=(g,f)$  the coordinates of $\mathbb{C}^2$. There exists a unique continuous function  $$ \mathcal{I}_{g}^{f} : \mathrm{NL}(X,0) \longrightarrow \mathbb{R}_{>0} \cup \{ +\infty  \} $$ such that $\mathcal{I}_g^f(v)=q_{g, v}^{f}$  for every divisorial point $v$ of $\mathrm{NL}(X,0)$  and such that we have the following commutative diagram
 
 $$
\xymatrix{
  \ar[rd]^{\mathcal{I}_g^f }  \mathrm{NL}(X,0)\ar[r]^{\widetilde{\Phi}} &\mathrm{NL}(\mathbb{C}^2,0) \ar[d]^{\mathcal{I}_{u_1}^{u_2}} \\
  & \mathbb{R}_{>0} \cup \{ +\infty  \}  
}$$
 Furthermore, if $\pi$ is a good resolution of $(X,0)$ compatible with the finite morphism $\Phi=(g,f)$ then $\mathcal{I}_g^f$  is piecewise linear on $\Gamma_{\pi}$ with respect to the metric $\mathrm{length}_f$.
\end{prop}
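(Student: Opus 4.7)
The plan is to construct $\mathcal{I}_g^f$ as the composition $\mathcal{I}_{u_1}^{u_2}\circ \widetilde{\Phi}$, where $\mathcal{I}_{u_1}^{u_2}\colon \mathrm{NL}(\mathbb{C}^2,0)\to \mathbb{R}_{>0}\cup\{+\infty\}$ is the analogous inner rates function associated to the identity morphism of $(\mathbb{C}^2,0)$ with coordinates $(u_1,u_2)$. In that case the polar curve is empty and the statement is a substantially simpler, essentially explicit, special case, covered by \cite[Lemma 3.8]{BFP}, which I will invoke as the base case. Since $\widetilde{\Phi}$ is continuous by construction (as the extension to semi-valuations of the ring pullback $\Phi^\#$) and $\mathcal{I}_{u_1}^{u_2}$ is continuous by the base case, the composition is continuous on $\mathrm{NL}(X,0)$ and the commutative triangle of the statement holds by definition.

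The first key verification is that $\mathcal{I}_{u_1}^{u_2}\bigl(\widetilde{\Phi}(v)\bigr)=q_{g,v}^f$ for every divisorial point $v\in \mathrm{NL}(X,0)$. This is immediate from the algorithmic description of $\widetilde{\Phi}$ on divisorial valuations recalled right after Definition \ref{linknonarchimedien}: the image $\widetilde{\Phi}(v)$ is precisely the divisorial valuation on $(\mathbb{C}^2,0)$ associated to an exceptional component $F_w$ of the minimal resolution separating the pairs of image curves $\gamma_i=\Phi\circ\pi(\gamma_i^*)$ coming from pairs of curvettes of $E_v$. Proposition \ref{inner-rate} then asserts the same big-Theta asymptotic $\mathrm{d}(\gamma_1\cap\{u_2=\epsilon\},\gamma_2\cap\{u_2=\epsilon\})=\Theta(\epsilon^{q_{g,v}^f})$ both for defining $q_{g,v}^f$ on $(X,0)$ and for defining the inner rate of $u_2$ with respect to $u_1$ along $F_w$ on $(\mathbb{C}^2,0)$, so the two rates coincide. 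Uniqueness now follows immediately: any two continuous functions on $\mathrm{NL}(X,0)$ that agree on all divisorial points must agree everywhere, because divisorial points are dense in $\mathrm{NL}(X,0)$ by Theorem \ref{universaldualgraph} (the vertex sets of the $\Gamma_\pi$ accumulate onto every point of the inverse limit).

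It remains to establish piecewise linearity of $\mathcal{I}_g^f$ on $\Gamma_\pi$ with respect to $\mathrm{length}_f$, for $\pi$ a good resolution compatible with $\Phi$. Fix an edge $e_{v,v'}$ corresponding to an intersection point $p\in E_v\cap E_{v'}$. The interior of $e_{v,v'}$ parametrises the monomial valuations at $p$ with respect to local coordinates cutting out $E_v$ and $E_{v'}$, and from the local normal form of $\Phi\circ\pi$ recorded in Remark \ref{remarque} one can write the restriction of $\widetilde{\Phi}$ to these monomial valuations explicitly. A direct calculation then shows that $\widetilde{\Phi}|_{e_{v,v'}}$ is affine when the source is equipped with $\mathrm{length}_f$ and the target with the analogous skeletal metric $\mathrm{length}_{u_2}$ on the image edge in $\mathrm{NL}(\mathbb{C}^2,0)$; composing with the piecewise linearity of $\mathcal{I}_{u_1}^{u_2}$ from the base case then produces the required piecewise linearity of $\mathcal{I}_g^f$ on $\Gamma_\pi$. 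The main obstacle in this last step is the careful bookkeeping of the multiplicities of $f$ and $g$ under the successive (possibly weighted) blow-ups at $p$ that produce the monomial valuations along $e_{v,v'}$, together with the verification that the resulting slopes of $\mathcal{I}_g^f$ are integral; the integrality statement $m_v(f)q_{g,v}^f\in\mathbb{Z}$ from Proposition \ref{inner-rate} is exactly what makes this bookkeeping close up.
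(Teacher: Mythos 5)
Your architecture is genuinely different from the paper's: you define $\mathcal{I}_g^f:=\mathcal{I}_{u_1}^{u_2}\circ\widetilde{\Phi}$ and then check the divisorial values, whereas the paper works directly on a resolution compatible with $\Phi$, proves linearity edge by edge via the blow-up recursion of Lemma \ref{recurence}, and extends through the inverse-limit description of $\mathrm{NL}(X,0)$. Your route could be made to work, but as written it has two gaps. First, the identification $\mathcal{I}_{u_1}^{u_2}\bigl(\widetilde{\Phi}(v)\bigr)=q_{g,v}^f$ is not immediate from Proposition \ref{inner-rate}: that proposition defines the inner rate along $F_w$ only through pairs of disjoint curvettes meeting $F_w$ in a common small neighbourhood $O_q$ of a good point $q$ (the proof needs $\alpha,\beta$ close for $H(0)\neq 0$), while the pair you actually have — the strict transforms by $\sigma$ of $\gamma_1,\gamma_2$ — meets $F_w$ at two distinct points that need not be close to each other, nor a priori away from the strict transforms of the coordinate axes. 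To close this you need an extra argument, e.g.\ choose the base point $p\in E_v$ generic so that the induced map of exceptional curves $E_v\to F_w$ is defined, locally injective at $p$, and sends $p$ off the bad locus of $F$; then curvettes through two nearby points of $E_v$ yield curvettes of $F_w$ through two distinct points in a common good neighbourhood, and only then does Proposition \ref{inner-rate} on $(\mathbb{C}^2,0)$ apply. This is fixable, but it is a missing step, not a formality.

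Second, and more seriously, the piecewise linearity — the actual content of the ``Furthermore'' — is only a plan in your write-up. Remark \ref{remarque} provides a normal form at a \emph{smooth} point of $E$ away from $f^*\cup g^*\cup\Pi_{\Phi}^*$, not at the double point $E_v\cap E_{v'}$, which is exactly where the monomial valuations parametrizing the interior of the edge live; so the claimed ``direct calculation'' that $\widetilde{\Phi}|_{e_{v,v'}}$ is integral affine with respect to $\mathrm{length}_f$ upstairs and the $u_2$-skeletal metric downstairs cannot be read off from that remark. The calculation must also handle the renormalization of $v\circ\Phi^{\#}$ (to land in $\mathrm{NL}(\mathbb{C}^2,0)$ one rescales so the maximal ideal has value $1$), which is precisely the multiplicity bookkeeping you acknowledge and then defer; as stated, the key claim is assumed rather than proved. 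The paper's device for exactly this point is Lemma \ref{recurence}$(ii)$: since $\pi$ is compatible with $\Phi$, the strict transforms $f^*,g^*,\Pi_{\Phi}^*$ avoid the double point, and blowing it up gives $m_w(f)=m_v(f)+m_{v'}(f)$ and $q_{g,w}^f=\frac{m_v(f)q_{g,v}^f+m_{v'}(f)q_{g,v'}^f}{m_v(f)+m_{v'}(f)}$, i.e.\ linearity at the three relevant points; iterating over the divisorial points of the edge and using density yields the piecewise linear extension, with integral slopes coming from the integrality of $m_v(f)q_{g,v}^f$ in Proposition \ref{inner-rate}. Either import that recursion into your argument or actually carry out the computation of $\widetilde{\Phi}$ on monomial valuations at the double point; without one of these, the proof of the last assertion is incomplete.
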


\begin{defi}
The function $\mathcal{I}_g^f$ is called \textbf{the inner rates function with respect to the morphism} $\Phi=(g,f)$.
\end{defi}

\noindent In order to prove Proposition  \ref{inner-rate function} we will need the following lemmas.

\begin{lemm}\label{recurence'}
Let $\Phi=(g,f): (X,0) \longrightarrow (\mathbb{C}^2,0)$ be a finite morphism and let $\pi :(X_{\pi},E) \longrightarrow (X,0)$ be a good resolution of $(X,0)$  and $(gf)^{-1}(0)$. Let  $E_v$  be an irreducible component  of $E$  such that it intersects the curve $f^* \cup g^*$. We have  
$$ q_{g,v}^f=\frac{m_v(g)}{m_v(f)}$$
\end{lemm}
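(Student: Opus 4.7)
The plan is to reduce to a direct local computation at a point $q$ where $E_v$ meets $f^*$ (the case $E_v\cap g^*\neq\emptyset$ is treated by the symmetric argument, exchanging the roles below). The key observation is that Proposition \ref{inner-rate} proves the inner rate $q_{g,v}^f$ is a global invariant of $E_v$: one may compute it using any pair of curvettes whose base points lie in $E_v\setminus(f^*\cup g^*\cup\Pi_\Phi^*)$. Even though $q$ itself lies on $f^*$, the local structure at $q$ is particularly simple, and curvettes through nearby generic base points can be analyzed using coordinates centered at $q$.

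First, since $\pi$ is a good resolution of $(X,0)$ and $(gf)^{-1}(0)$, the divisor $E\cup f^*\cup g^*$ has simple normal crossings. In particular, $q$ is a smooth point of $E$ and does not lie on $g^*$. I would pick local coordinates $(x,y)$ centered at $q$ with $E_v=\{x=0\}$ and $f^*=\{y=0\}$, in which
$$
(f\circ\pi)(x,y)=u(x,y)\,x^{m_v(f)}\,y,\qquad (g\circ\pi)(x,y)=v(x,y)\,x^{m_v(g)},
$$
with $u(0,0),v(0,0)\in\mathbb{C}^{\times}$. I would then take two curvettes $\gamma_i^*$ of $E_v$ parametrized by $t\mapsto(t,\alpha_i)$ with $\alpha_1\neq\alpha_2$ small and nonzero, so both base points $(0,\alpha_i)$ are smooth points of $E$ off $f^*\cup g^*$. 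A brief Jacobian computation shows that for all but finitely many $\alpha$ the base point is also off $\Pi_\Phi^*$, so these curvettes are legitimate for computing the inner rate.

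The remaining step is a direct substitution: setting $u_2\circ\Phi\circ\pi=\epsilon$ on $\gamma_i^*$ gives $u(t,\alpha_i)\,\alpha_i\,t^{m_v(f)}=\epsilon$, which I would solve for $t=\epsilon^{1/m_v(f)}\tau_i(\epsilon)$ with $\tau_i$ a unit; feeding this back into $u_1\circ\Phi\circ\pi$ produces $(u_1\circ\gamma_i)(\epsilon)=V_i(\epsilon)\,\epsilon^{m_v(g)/m_v(f)}$ where $V_i(0)=v(0,\alpha_i)\,\tau_i(0)^{m_v(g)}\neq 0$. Hence
$$
\mathrm{d}(\gamma_1\cap\{u_2=\epsilon\},\gamma_2\cap\{u_2=\epsilon\})=|V_1(\epsilon)-V_2(\epsilon)|\,\epsilon^{m_v(g)/m_v(f)}=\Theta\!\left(\epsilon^{m_v(g)/m_v(f)}\right),
$$
provided $V_1(0)\neq V_2(0)$, a condition cut out by a proper analytic subset in $(\alpha_1,\alpha_2)$ and therefore generically satisfied. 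Comparing with the characterization $\Theta(\epsilon^{q_{g,v}^f})$ yields $q_{g,v}^f=m_v(g)/m_v(f)$.

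The main obstacle is bookkeeping rather than conceptual: one must check that the curvettes chosen near $q$ actually meet the open neighborhood $O_p$ required in Proposition \ref{inner-rate} and that the leading coefficient $V_1(0)-V_2(0)$ does not vanish. Both reduce to the fact that, for $(\alpha_1,\alpha_2)$ outside a proper analytic subvariety, the base points avoid the polar locus and the coefficient is nondegenerate, which follows from working with units in $\mathbb{C}\{x,y\}$.
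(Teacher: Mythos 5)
Your proposal is correct and follows essentially the same route as the paper: a local normal form in coordinates $(x,y)$ at a point where $E_v$ meets a strict transform, explicit curvettes $t\mapsto(t,\alpha_i)$, a direct computation of the distance in the slice $\{u_2=\epsilon\}$ giving the exponent $m_v(g)/m_v(f)$, and identification with $q_{g,v}^f$ via Proposition \ref{inner-rate}. The only differences are cosmetic: the paper centers the computation at a point of $g^*$ (normalizing $f\circ\pi=x^{m_v(f)}$ so no unit needs to be inverted) and treats the $f^*$ case as "similar", whereas you do the $f^*$ case first and keep the units, solving for $t$ in terms of $\epsilon$.
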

\begin{proof} Let $p$ be a point of $g^*$. Let $(x,y)$ be a local system of coordinates centered at $p$ such that $E_v$ has  local  equation $x=0$ and such that $(f \circ \pi)(x,y)=x^{m_v(f)}$. Let  $S_g$  be the element of  $\mathbb{C}\{ x,y\}$ such that $ (g \circ \pi)(x,y) = x^{m_v(g)}S_g(x,y)$ . Then, the equation of the strict transform $g^*$ in the coordinates $(x,y)$ is  $S_g(x,y)=0$. Since the curve  $g^*$  is transverse to  the component $E_v$  we can write $ S_g(x,y)=y^{k_v}U_g(x,y)$  where $U_g$  is a unit of $\mathbb{C}\{x,y\}$ and $k_v$ is an element of   $\mathbb{N}^*$.

 Let $\gamma_{1}^*$ and $\gamma_{2}^*$ be two curvettes of $E_v$ parametrized respectively by
$$ t \mapsto (t,\alpha), \ t \mapsto (t,\beta ), \ \alpha,\beta \in \mathbb{C}. $$
 The curves $\gamma_1= (\Phi \circ \pi)(\gamma_1^*) $ and $\gamma_2=(\Phi \circ \pi)(\gamma_2^*)$  are then parametrized respectively by
$$  t \mapsto (\alpha^{k_v}  t^{m_v(g)} U_g(t,\alpha)  ,t^{m_v(f)})$$
$$  t \mapsto (\beta^{k_v} t^{m_v(g)} U_g(t,\beta)  ,t^{m_v(f)}).$$
Therefore, for $\epsilon > 0$, we have  $$ \mathrm{d}(\gamma_1 \cap \{ u_2 = \epsilon \}, \gamma_2 \cap \{ u_2 =\epsilon\})= \displaystyle\left\lvert \epsilon^{\frac{m_v(g)}{m_v(f)}}  \displaystyle\right\rvert H(\epsilon),  $$
where
$$H(\epsilon)= \displaystyle\left\lvert  \alpha^{k_v} U_g(\epsilon^{\frac{1}{m_v(f)}} ,\alpha) - \beta^{k_v} U_g(\epsilon^{\frac{1}{m_v(f)}},\beta  )     \displaystyle\right\rvert.$$ There exists two numbers $\alpha$ and $\beta$ such that $H(0) \neq 0$, we can then conclude that $q_{g,v}^f=\frac{m_v(g)}{m_v(f)}$. The case $p \in f^*$ is treated with similar arguments.

\end{proof}
\begin{defi}
A good resolution $\pi:(X_{\pi},E) \longrightarrow (X,0)$ of $(X,0)$  is said to be \textbf{compatible with the finite morphism $\Phi=(g,f)$} if it is a good resolution of the curve $\Pi_{\Phi} \cup (gf)^{-1}(0)$. \end{defi}

\begin{lemm}\label{recurence}

Let $\Phi=(g,f): (X,0) \longrightarrow (\mathbb{C}^2,0)$ be a finite morphism and let $\pi :(X_{\pi},E) \longrightarrow (X,0)$ be a good resolution of $(X,0)$ compatible with the finite morphism $\Phi$. Let $E_v$ be an irreducible component of $E$. Let $p$ be a point of  $E_v$ and let $E_{w}$ be the exceptional component created by the blowup of $X_{\pi}$ at $p$. Then:
\begin{enumerate}

\item if $p$ is a smooth point of $E$, then

\begin{itemize}
\item if $p$ does not lies in the strict transforms $g^*,f^*$ or $\Pi_{\Phi}^*$, then

                 \begin{align*}
		m_w(g)=m_v(g), && m_w(f)=m_v(f),  &&  \text{and} && q_{g,w}^f=q_{g,v}^f+\frac{1}{m_v(f)};
		\end{align*}
		
		\item if $p$ lies in the strict transform  $g^*$ then:
		
		\begin{align*}
		m_w(g)=m_v(g)+g^*\cdot E_v, && m_w(f)=m_v(f) ,&& \text{and} && q_{g,w}^f=\frac{m_w(g)}{m_w(f)}=q_{g,v}^{f} + \frac{g^* \cdot E_v}{m_v(f)};		\end{align*}	

                 \item if $p$ lies in the strict transform  $f^*$ then:

                	\begin{align*}
		m_w(g)=m_v(g), &&m_w(f)=m_v(f)+f^*\cdot E_v, && \text{and}&& q_{g,w}^f=\frac{m_w(g)}{m_w(f)}=\frac{m_v(f)q_{g,v}^f}{m_v(f)+ f^*  \cdot E_v};
		\end{align*}

		\item if $p$ lies in the strict transform  $\Pi_{\Phi}^*$
		\begin{align*}
		m_w(g)=m_v(g), &&  m_w(f)=m_v(f),&& \text{and}&& q_{g,w}^f=q_{g,v}^f+\frac{1+ \Pi_{\Phi}^*\cdot E_v}{m_v(f)};
		\end{align*}

\end{itemize}
				
\item if $p$ is a double point of $E_v$ and $E_{v'}$, then
		\begin{align*}
		m_w(g)=m_v(g)+ m_{v'}(g) &&\text{and}&& m_w(f)=m_v(f) + m_{v'}(f);
		\end{align*}
		$$q_{g,w}^f=\frac{q_{g,v}^f m_v(f) + q_{g,v'}^f m_{v'}(f)}{m_v(f)+m_{v'}(f)}$$
		
\end{enumerate}
\end{lemm}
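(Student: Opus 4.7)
The proof naturally splits into two parts: computing the multiplicities $m_w(g), m_w(f)$, and then computing the inner rate $q_{g,w}^f$. Throughout, let $\sigma$ denote the blowup at $p$ and $\pi' := \pi\circ\sigma$.

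For the multiplicities, I would proceed by a standard local blowup computation. In case (i), choose local coordinates $(x,y)$ at $p$ with $E_v = \{x=0\}$, and write $g\circ\pi = x^{m_v(g)} S_g(x,y)$ and $f\circ\pi = x^{m_v(f)} S_f(x,y)$. The vanishing of $S_g$ (respectively $S_f$) at $p$ is controlled by whether $p\in g^*$ (resp. $p\in f^*$); under the compatibility hypothesis, each branch of $g^*$, $f^*$, or $\Pi_\Phi^*$ through $p$ is a curvette, hence transverse to $E_v$ with local contribution $1$ to the corresponding intersection number appearing in the statement. Pulling back via the chart $x=x'$, $y=x'y'$, the order of $x'$ in the resulting expression along $E_w=\{x'=0\}$ yields the claimed multiplicities in all four sub-cases. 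In case (ii), take coordinates with $E_v=\{x=0\}$ and $E_{v'}=\{y=0\}$; then $g\circ\pi = x^{m_v(g)} y^{m_{v'}(g)} V(x,y)$ and $f\circ\pi = x^{m_v(f)} y^{m_{v'}(f)} W(x,y)$ with $V,W$ units at $p$ (since no special curve passes through a double point of $E$ in a compatible resolution), and the same blowup chart immediately gives $m_w(g) = m_v(g)+m_{v'}(g)$ and $m_w(f) = m_v(f)+m_{v'}(f)$.

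For the inner rates, the cleanest uniform method is to apply the Inner Rates Formula (Theorem \ref{laplacien}) to $\pi'$ at the new vertex $w$. In every sub-case $E_w$ is rational ($g_w=0$), has self-intersection $-1$, and is connected in $\Gamma_{\pi'}$ only to the component(s) containing $p$; moreover, the compatibility of $\pi$ with $\Phi$ implies that $f^*\cdot E_w$ and $\Pi_\Phi^* \cdot E_w$ are $0$ unless $p$ lies on the corresponding curve, in which case each equals $1$. The equation
\begin{equation*}
\sum_i m_i(f)\,q_{g,i}^f\, E_i\cdot E_w \;=\; \mathrm{val}_{\Gamma_{\pi'}}(w) \,+\, f^*\!\cdot E_w \,-\, \Pi_\Phi^*\!\cdot E_w \,+\, 2g_w \,-\, 2
\end{equation*}
then becomes, in cases (i)(a)--(d), a single linear equation whose unique unknown is $q_{g,w}^f$; solving it recovers each of the four stated formulas. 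In case (ii), the left-hand side acquires a second term $m_{v'}(f)\,q_{g,v'}^f$ from the edge $E_{v'}\cdot E_w = 1$, the valency of $w$ equals $2$, and $f^*\cdot E_w = \Pi_\Phi^* \cdot E_w = 0$; solving gives the weighted-average expression
\begin{equation*}
q_{g,w}^f \;=\; \frac{m_v(f)\,q_{g,v}^f + m_{v'}(f)\,q_{g,v'}^f}{m_v(f) + m_{v'}(f)}.
\end{equation*}

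As an independent cross-check for the cases where a special curve passes through $p$: in cases (i)(b) and (i)(c), the strict transform of $g^*$ (resp. $f^*$) meets $E_w$ after the blowup, so Lemma \ref{recurence'} applies to give directly $q_{g,w}^f = m_w(g)/m_w(f)$. Combining with the multiplicity formulas and with Lemma \ref{recurence'} applied to $E_v$ itself (which also meets the corresponding curve) reproduces the stated identities. The main bookkeeping obstacle is simply to keep track, in each sub-case, of which branches of the strict transforms pass through $p$ and of the precise contribution of each to the valency and to the intersection numbers with $E_w$ in $\Gamma_{\pi'}$; once these are correctly identified, every sub-case reduces to a one-line solution of the Inner Rates Formula.
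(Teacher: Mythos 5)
Your proposal is correct, and it differs from the paper's proof mainly in how uniformly it deploys the inner rates formula. The multiplicity computations are the same local blowup-chart calculations the paper performs. For the rates, the paper only invokes Theorem \ref{laplacien} at the new vertex $w$ in the sub-case $p\in\Pi_{\Phi}^*$ and in the double-point case; it treats the generic smooth point by comparing the pulled-back expression of $g\circ\pi\circ e_p$ with the normal form of Remark \ref{remarque} (which pins down $q_{g,w}^fm_w(f)=q_{g,v}^fm_v(f)+1$ directly), and it treats $p\in g^*$ and $p\in f^*$ via Lemma \ref{recurence'} applied to $E_w$, exactly as in your cross-check. Your uniform use of the formula at $w$ is legitimate: Theorem \ref{laplacien} is proved in Section \ref{section3} independently of this lemma, the blown-up map $\pi\circ e_p$ is again a good resolution, and the inner rates and multiplicities of the old components are unchanged under the extra blowup (they only depend on the component, by Proposition \ref{inner-rate}); with $E_w^2=-1$, $g_w=0$ and the valency and intersection data you list, each sub-case indeed reduces to one linear equation in $q_{g,w}^f$, and the solutions match the statement. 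Two small caveats. First, in sub-case $p\in\Pi_{\Phi}^*$ you assert $\Pi_{\Phi}^*\cdot E_w=1$ from compatibility, whereas the paper instead proves $\Pi_{\Phi}^*\cdot E_w=\Pi_{\Phi}^*\cdot E_v$ by a local computation without assuming this number is $1$; the paper's remark after the lemma indicates point $(i)$ is meant in more generality than the compatible setting, so its argument is slightly more robust there. Second, like the paper's own proof, you silently identify the global numbers $g^*\cdot E_v$, $f^*\cdot E_v$, $\Pi_{\Phi}^*\cdot E_v$ appearing in the statement with the local contribution of the single branch through $p$; this is a shared imprecision rather than a gap in your argument. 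What your route buys is economy (one mechanism for all five sub-cases); what the paper's route buys is that the cases $p\in g^*\cup f^*$ come with the identity $q_{g,w}^f=m_w(g)/m_w(f)$ built in via Lemma \ref{recurence'}, which is the form actually used later for the Hironaka-quotient comparison.
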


\begin{rema}
The point $(i)$ of this lemma is more general than needed for this paper. Indeed, we will only use it for the morphism identity of $\mathbb{C}^2$, whose polar curve is empty.
\end{rema}
\begin{proof}Denote by $e_p$ the blowup of $X_{\pi}$ centered at $p$. Assume first that $p$ is a smooth point of $E_v$ which does not lie in the strict transform of $f,$ $g$ or the polar curve. By remark \ref{remarque} there exists a local system of coordinates $(x,y)$ centered at $p$ such that:

 $$\left \{
\begin{array}{rcl}
(g \circ \pi)(x,y)&=&x^{m_v(g)} g_{0}(x) +x^{ q_{g,v}^{f} m_v(f) }\sum_{j \geq 1 }  y^j g_{j}(x) \  \\
(f \circ \pi )(x,y)&=&x^{m_v(f)}
\end{array}
\right.
$$ with   $g_{0}(0) \neq 0$,  $g_{1}(0) \neq 0$.\\
Let $e_p$ be the blowup of $X_{\pi}$ at $p$. In the coordinates chart $ (x,y) \mapsto (x',x'y')$ we have
$$\left \{
\begin{array}{rcl}
(g \circ \pi \circ e_p) (x',y')&=&{x'}^{m_v(g)} g_{0}({x'}) +{x'}^{q_{g,v}^f m_v(f)+1 }\sum_{j \geq 1 }  {y'}^j {x'}^{j-1} g_{j}({x'}) \\
(f \circ \pi \circ e_p) (x',y')&=&{x'}^{m_v(f)}.
\end{array}
\right.
$$Therefore $m_w(g)=m_v(g)$ and $m_w(f)=m_v(f)$. By comparing the equality of $g \circ \pi\circ e_p$ with the first equality given by the remark \ref{remarque} for $E_w$, we get  $q_{g,w}^f m_w(f)=q_{g,v}^f m_v(f)+1$, which proves 
$$q_{g,w}^f=q_{g,v}^f +\frac{1}{m_v(f)}.$$

Assume that $p$ is in $g^*$. Using again the notations of the proof of Lemma \ref{recurence'}.  In the coordinates chart $ (x,y) \mapsto (x',x'y'),$ we have:
\begin{eqnarray*} \label{1}
\left \{
\begin{array}{rcl}
(g \circ \pi  \circ e_p)(x',y'))&=&{x'}^{m_v(g)+k_v}y'^{k_v} U_g(x',x'y')\\
(f \circ \pi  \circ e_p) (x',y'))&=&{x'}^{m_v(f)}
\end{array}
\right. 
\end{eqnarray*}
 where $U_g$ is a unit of $\mathbb{C}\{x,y\}$ and $k_v$ is a non zero integer. Thus, by noticing that $k_v=g^*\cdot E_v$, we have  \begin{align*}
		m_w(g)=m_v(g)+g^* \cdot E_v &&\text{and}&& m_w(f)=m_v(f). 
		\end{align*} Since the component $E_w$  intersects $g^*$, then, by the lemma \ref{recurence'}, we get that $$ q_{g,w}^f=\frac{m_w(g)}{m_w(f)}=q_{g,v}^{f} + \frac{g^* \cdot E_v}{m_v(f)}$$

		 If $p$ is in $f^*$, with similar arguments we get:   	
		 \begin{align*}
		m_w(g)=m_v(g) &&\text{and}&& m_w(f)=m_v(f)+f^*\cdot E_v.  	
			\end{align*}	
			$$q_{g,w}^f=\frac{m_w(g)}{m_w(f)}=\frac{m_v(f)q_{g,v}^f}{m_v(f)+ f^*  \cdot E_v}$$

		We will now suppose that $p$ lies in the strict transform of the polar curve $\Pi_{\Phi}^{*}$.  Let $(x,y)$ be a local system of coordinates centered at $p$ such that $E_v$ has  local  equation $x=0$ and such that $(f \circ \pi)(x,y)=x^{m_v(f)}$. Let  $U$ be the  unit of $\mathbb{C}\{ x,y\}$ such that $ (g \circ \pi)(x,y) = x^{m_v(g)}U(x,y)$.
Let $e_p$ be the blowup of $X_{\pi}$ at $p$. in the coordinates chart $ (x,y) \mapsto (x',x'y')$ we have
$$\left \{
\begin{array}{rcl}
(g  \circ \pi \circ e_p )(x',y')&=&{x'}^{m_v(g)} U(x',x'y')\\
(f  \circ \pi \circ e_p) (x',y')&=&{x'}^{m_v(f)}.
\end{array}
\right.
$$
We then deduce that $m_w(f)=m_v(f)$, $m_w(g)=m_v(g)$ and it follows using the inner rates formula \ref{laplacien} on the vertex $w$ that:

$$q_{g,w}^f=q_{g,v}^f+\frac{1+\Pi_{\Phi}^* \cdot E_w}{m_v(f)}.$$ We now need to prove that $\Pi_{\Phi}^* \cdot E_w = \Pi_{\Phi}^* \cdot E_v$. The equation of the total transform of the polar curve in the coordinates $(x,y)$ is $x^{m_v(g)+m_v(f)-1}\frac{\partial U}{\partial y}(x,y)=0$. Since the polar curve is transverse to the exceptional divisor $E_v$ we can write $\frac{\partial U}{\partial y}(x,y)=y^{k_v}V(x,y)$ where $V$ is an element of $\mathbb{C}\{x,y\}$ such that $y$ is not a divisor of $V$  and $k_v$ is a non zero positive integer. The total transform of the polar curve by $e_p \circ \pi$, in the coordinates  chart $ (x,y) \mapsto (x',x'y')$, is:
$$m_v(f)x'^{m_v(g)+m_v(f)+k_{v}-1}y'^{k_v} V(x',x'y')=0.$$
Using this last equation, a direct computation shows that the intersection number between the curve  $E_w$ and the strict transform of the polar curve equals $k_v=\Pi_{\Phi}^* \cdot E_v$.\\

Assume now that $p$ is an intersecting point between two irreducible components $E_v$ and $E_{v'}$. In local coordinates $(x,y)$ centered at $p$, we can assume without loss of generality that
In the coordinates chart $ (x,y) \mapsto (x',x'y')$ we have

$$(f \circ \pi \circ e_p ) (x',y') = x'^{m_v(f)+m_v(g)} y'^{m_{v'}(f)},$$

\noindent we then deduce that $m_w(f)=m_v(f)+m_{v'}(f)$. By symmetry of the roles of $f$ and $g$  we also have  $m_w(g)=m_v(g)+m_{v'}(g)$. By applying the inner rates formula  (Theorem \ref{laplacien}) on the vertex $w$ we get:
$$q_{g,w}^f=\frac{q_{g,v}^f m_v(f)+q_{g,v'}^f m_{v'}(f)}{m_v(f)+m_{v'}(f)}$$\end{proof}


Now we are ready to prove the proposition that shows the existence of the inner rates function.
\begin{proof}[Proof of Proposition \ref{inner-rate function}]

Let $\pi:(X_{\pi},E) \longrightarrow (X,0) $ be a good resolution of $(X,0)$ compatible with the morphism $\Phi=(g,f)$.
We only need to show that the inner rates extend uniquely to a continuous map on $\Gamma_{\pi}$ which is linear on its edges with integral slopes, because $\mathrm{NL}(X,0)$ is homeomorphic to the inverse limit of dual graphs.\\
 Let $e_{v v'}$ be an edge of $\Gamma_{\pi} $. By definition, the  slope on $e_{v,v'}$ is $ \frac{q_{g,v' }^f - q_{g,v }^f  }{\mathrm{length}_f(e_{v v' } )}= m_v(f)m_{v'}(f)(q_{g ,v' }^f - q_{g,v }^f )$ and it  is an integer by Proposition  \ref{inner-rate}.
 
  By density, it is sufficient to prove the linearity of $\mathcal{I}_g^f$ along  $e_{v v'}$ on the set of the divisorial valuations. On the other hand, since every divisorial point of that edge are obtained by successively  blowing up  its double points, it is sufficient to prove that $\mathcal{I}_g^f$  is linear on the set $\{ v, v',v''\}$, where $v''$ is obtained by blowing up the double point $E_{v} \cap E_{v'}$. Therefore, all we have to show is that 
\begin{equation}\label{funk} \frac{\mathcal{I}_g^f(v') - \mathcal{I}_g^f(v)}{ \mathrm{length}_f(e_{v v'} ) } =\frac{\mathcal{I}_g^f(v'') - \mathcal{I}_g^f(v)}{ \mathrm{length}_f(e_{v v''} ) }.\end{equation} Since the good resolution $\pi$ is compatible with the morphism $\Phi$, the strict transforms  $f^*,g^*$ and $\Pi_{\Phi}^*$ do not pass through the double point $E_v \cap E_{v'}$. The equality \eqref{funk}  is then a direct consequence of the last part of Lemma \ref{recurence}. 

\end{proof}

\subsection{Laplacian formula on the metric dual graph of a good resolution}

Let  $(X,0)$ be a complex surface germ with an isolated singularity and $\Phi=(g,f) : (X,0) \longrightarrow (\mathbb{C}^2,0)$ be a finite morphism. Let $\pi: (X_{\pi},E) \longrightarrow (X,0)$ be a good resolution of $(X,0)$ compatible with the morphism $\Phi$. We proved in Proposition \ref{inner-rate function} that the inner rate function $\mathcal{I}_g^f$ is a piecewise linear map on the dual graph $\Gamma_{\pi}$ with respect to the metric $\mathrm{Length}_f$. Then, as an application of the inner rates formula \ref{laplacien} we will compute the laplacian of $\mathcal{I}_g^f$ on the dual graph. We will now give three  example of  divisor on the dual graph $\Gamma_{\pi}$ which are important for the statement of our result :
\begin{enumerate}

\item $K_{\Gamma_{\pi}} = \sum_{v \in \Gamma_{\pi} }  m_v(f)(\mathrm{val}_{\Gamma_{\pi}}(v)+2g_v-2).v$ it is a divisor because every point $v$ which is not a vertex of $\Gamma_{\pi}$ has valency $2$ and genus equal $0$.
\item $F_{\Gamma_{\pi}} = \sum_{v \in \Gamma_{\pi}} (m_v(f)+ m_v(g) )(f^* \cdot E_v) .v$ it is a divisor because there are finitely many branches of $f^*$.
\item $P_{\Gamma_{\pi}} = \sum_{v \in \Gamma_{\pi} }m_v(f) (\Pi_{\Phi}^* \cdot E_v).v $ it is a divisor because there are  finitely many branches of $\Pi_{\Phi}^*$. 
\end{enumerate}

\noindent Now, we are ready to state the inner rates formula (Theorem \ref{laplacien}) in terms of the laplacian of the function $\mathcal{I}_g^f$.
\begin{coro}\label{laplace}
The laplacian of the  inner rates function $\mathcal{I}_g^f$ is given by the formula $$\Delta_{\Gamma_{\pi}}(\mathcal{I}_g^f)= K_{\Gamma_{\pi}} + F_{\Gamma_{\pi}} - P_{\Gamma_{\pi} }$$
\end{coro}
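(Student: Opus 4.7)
The approach is to exploit the piecewise linearity of the inner rates function on $\Gamma_\pi$ provided by Proposition \ref{inner-rate function} to reduce the claimed identity to a vertex-by-vertex comparison of coefficients, which will be a direct rearrangement of the inner rates formula (Theorem \ref{laplacien}) together with Proposition \ref{laufer}.

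First I would verify that both divisors are supported on $V(\Gamma_\pi)$. On the left, $\Delta_{\Gamma_\pi}(\mathcal{I}_g^f)$ is concentrated at vertices because $\mathcal{I}_g^f$ is affine on each edge with respect to $\mathrm{length}_f$, so all slope jumps occur at vertices. On the right, every non-vertex point has valency $2$ and genus $0$, killing its $K_{\Gamma_\pi}$-coefficient; and the compatibility of $\pi$ with $\Phi$ forces $f^*$ and $\Pi_\Phi^*$ to be disjoint unions of curvettes attached at smooth points of components of $E$, so $f^*\cdot E_v$ and $\Pi_\Phi^*\cdot E_v$ vanish unless $v$ is a vertex. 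It therefore suffices to compare coefficients at an arbitrary vertex $v$.

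Next I would compute the coefficient $S_v$ of $v$ in $\Delta_{\Gamma_\pi}(\mathcal{I}_g^f)$. Using $\mathrm{length}_f(e_{vv'})=1/(m_v(f)m_{v'}(f))$ and factoring out $m_v(f)$,
\begin{equation*}
S_v = m_v(f)\left[\sum_{v'} m_{v'}(f)\,q_{g,v'}^f\,(E_v\cdot E_{v'}) \;-\; q_{g,v}^f \sum_{v'} m_{v'}(f)\,(E_v\cdot E_{v'})\right],
\end{equation*}
where extending the ranges to include $v'=v$ is harmless since the two added self-terms cancel. Theorem \ref{laplacien} evaluates the first inner sum as $\mathrm{val}_{\Gamma_\pi}(v)+f^*\cdot E_v-\Pi_\Phi^*\cdot E_v+2g_v-2$, while Proposition \ref{laufer} applied to $f$ gives $\sum_{v'} m_{v'}(f)(E_v\cdot E_{v'}) = -f^*\cdot E_v$. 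Substituting yields
\begin{equation*}
S_v = m_v(f)\bigl(\mathrm{val}_{\Gamma_\pi}(v)+2g_v-2\bigr) + m_v(f)\bigl(1+q_{g,v}^f\bigr)(f^*\cdot E_v) - m_v(f)(\Pi_\Phi^*\cdot E_v).
\end{equation*}

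Finally I would match this against the coefficient of $v$ in $K_{\Gamma_\pi}+F_{\Gamma_\pi}-P_{\Gamma_\pi}$. The $K$- and $P$-contributions match term by term; the only apparent discrepancy is in the $f^*\cdot E_v$ coefficient, where one sees $m_v(f)(1+q_{g,v}^f)$ on the left versus $m_v(f)+m_v(g)$ on the right. When $f^*\cdot E_v=0$ the term is absent in both expressions, and when $f^*\cdot E_v\neq 0$ the component $E_v$ meets $f^*$, so Lemma \ref{recurence'} delivers exactly $m_v(f)\,q_{g,v}^f=m_v(g)$. This reconciliation between the asymmetric datum $m_v(f)+m_v(g)$ in $F_{\Gamma_\pi}$ and the value of the inner rate at the $f$-nodes is the one subtle step of the argument, and Lemma \ref{recurence'} is precisely what makes it work.
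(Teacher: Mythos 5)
Your proposal is correct and follows essentially the same route as the paper: reduce to comparing coefficients at each vertex, evaluate the outgoing-slope sum via Theorem \ref{laplacien} and Proposition \ref{laufer}, and reconcile the $f^*\cdot E_v$ coefficient using $q_{g,v}^f=m_v(g)/m_v(f)$ from Lemma \ref{recurence'} at $f$-nodes. The only cosmetic difference is that you make the support verification and the extension of the sums to $v'=v$ explicit, which the paper leaves implicit.
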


\begin{proof}
Let $v$ be a vertex of $\Gamma_{\pi}$ and let us write the inner rates formula  \ref{laplacien} on $v$: 

$$
 \left( \sum_{i \in V(\Gamma_{\pi})} m_{i}(f)q_{g,i}^f E_i  \right) \cdot E_{v}= \mathrm{val}_{\Gamma_{\pi}}(v) +f^* \cdot E_v-\Pi_{\Phi}^* \cdot E_v+2g_v-2.
$$

 \noindent Let $v_1,v_2,\ldots,v_k$ be the  vertices of $\Gamma_{\pi}$ adjacent to $v$. Then, the last equality is equivalent to $$
  \sum_{i = 1}^{k} m_{v_i}(f)q_{g,v_i}^f +m_{v}(f)q_{g,v}^f E_v^2 = \mathrm{val}_{\Gamma_{\pi}}(v)  +f^* \cdot E_v-\Pi_{\Phi}^* \cdot E_v+2g_v-2.
$$

\noindent By Proposition \ref{laufer} we have $m_v(f)E_v^2=-\sum_{i=1}^{k}m_{v_i}(f)E_{v_i}-f^*\cdot E_v$ and we inject it on the last equation:
$$ \sum_{i=1}^{k} m_{v_i}(f) (q_{g, v_i}^f -q_{g, v }^f ) = (q_{g, v}^f+1) f^* \cdot E_v + \mathrm{val}_{\Gamma_{\pi}}(v) +2g_v-2 - \Pi_{\Phi}^* \cdot E_v.$$
If $f^* \cdot E_v \neq 0$,  by Lemma \ref{recurence'}, we have  $q_{g,v}^f =\frac{m_v(g)}{m_v(f)}$. Then, by multiplying both sides of the equation by $m_v(f)$, we get
$$ \sum_{i=1}^{k} m_v(f) m_{v_i}(f)(q_{g,v_i}^f-q_{g ,v }^f ) = (m_v(g) +m_v(f) ) f^* \cdot E_v + m_v(f) (\mathrm{val}_{\Gamma_{\pi}}(v) +2g_v-2 )- m_v(f)  \Pi_{\Phi}^* \cdot E_v.$$
Since $\length_f(v,v_i) = \frac{1}{m_{v}(f)m_{v_i}(f) }$, we can then conclude that
$$\Delta_{\Gamma_{\pi}}(\mathcal{I}_g^f)= K_{\Gamma_{\pi}} + F_{\Gamma_{\pi}} - P_{\Gamma_{\pi} }$$
\end{proof}

\section{Growth behaviour of the inner rates function}\label{section6}
The aim of this section is to prove Point $(i)$ of Theorem \ref{thmD}.  For this aim, we will need the following lemma which is an adaptation of \cite[Subsection 4.5]{BFP}. 
\begin{lemm}\label{adapted}Let $(X,0)$ be a complex surface germ with an isolated singularity and let $\Phi=(g,f):(X,0) \longrightarrow (\mathbb{C}^2,0)$ be a finite morphism.  Let $\pi:(X_{\pi},E) \longrightarrow (X,0)$ be a good resolution of $(X,0)$. There exists a finite sequence of blowups $\sigma : (Y,F)\longrightarrow (\mathbb{C}^2,0) $, a good resolution $\tilde{\pi}:(X_{\tilde{\pi}},\tilde{E})\longrightarrow (X,0) $, a finite sequence of blowups $\alpha_{\pi}: (X_{\tilde{\pi}},\tilde{E})\longrightarrow (X_{\pi},E) $ and a morphism $\Phi_{\tilde{\pi} }: (X_{\tilde{\pi}},\tilde{E})\longrightarrow (Y,F) $ such that the following diagram commutes  
$$
\xymatrix{
  \ar[rd]^{\Phi_{\tilde{\pi}} }  (X_{\tilde{\pi}},\tilde{E})  \ar@/^1.2pc/[rr]^{\tilde{\pi}}     \ar[r]^{\alpha_{\pi} } &(X_{\pi},E) \ar[r]^{\pi} &  (X,0) \ar[d]^{\Phi}\\
  & (Y,F)  \ar[r]^{\sigma}    & (\mathbb{C}^2,0)
}$$
and the following properties are satisfied 
 \begin{enumerate}
\item For every vertex $v$ of $\Gamma_{\pi}$, we have $\widetilde{\Phi}(v) \in V(\Gamma_{\sigma})$
\item For every vertex $w$ of $\Gamma_{\sigma}$, we have $\widetilde{\Phi}^{-1}(w) \subset V(\Gamma_{\tilde{\pi}} )$.\\
\end{enumerate}
 \end{lemm}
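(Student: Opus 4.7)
The plan is to construct $\sigma$ first so as to achieve property (i), and then to enlarge $\pi$ by further blowups into $\tilde{\pi}$ so as both to resolve the indeterminacy of the meromorphic map $\sigma^{-1}\circ\Phi\circ\pi\colon X_{\pi}\dashrightarrow Y$ and to achieve property (ii).

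For the construction of $\sigma$, I would rely on the standard fact that every divisorial valuation on $\mathcal{O}_{\mathbb{C}^2,0}$ is realized as the divisorial valuation associated with an irreducible component of some finite sequence of point blowups of $(\mathbb{C}^2,0)$, and that any finite collection of divisorial valuations can be realized simultaneously by a common modification dominating the individual ones. Since $V(\Gamma_{\pi})$ is finite and each image $\widetilde{\Phi}(v)$ is a divisorial valuation on $\mathcal{O}_{\mathbb{C}^2,0}$ (using that $\Phi$ is finite, as discussed in Section~\ref{section4}), I can then choose a finite sequence of blowups $\sigma\colon (Y,F)\to (\mathbb{C}^2,0)$ whose exceptional divisor $F$ contains a component realizing $\widetilde{\Phi}(v)$ for every $v\in V(\Gamma_{\pi})$. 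This immediately yields property (i).

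For $\tilde{\pi}$ and $\Phi_{\tilde{\pi}}$, I would apply the classical resolution of indeterminacies for meromorphic maps between smooth complex surfaces (via a finite sequence of point blowups) to the map $\sigma^{-1}\circ\Phi\circ\pi\colon X_{\pi}\dashrightarrow Y$. This produces a finite composition of blowups $\alpha_{\pi}\colon (X_{\tilde{\pi}},\tilde{E})\to (X_{\pi},E)$ after which the composition lifts to a holomorphic morphism $\Phi_{\tilde{\pi}}\colon X_{\tilde{\pi}}\to Y$, and commutativity of the diagram is automatic. After possibly performing additional blowups at non-crossing singular points of $\tilde{E}$, we may assume $\tilde{E}$ is a simple normal crossings divisor, so that $\tilde{\pi}:=\pi\circ\alpha_{\pi}$ is a good resolution of $(X,0)$. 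To achieve property (ii), I would then perform further blowups on $X_{\tilde{\pi}}$ so as to realize every element of the set $\widetilde{\Phi}^{-1}(V(\Gamma_{\sigma}))$ as an irreducible component of the exceptional divisor of the new $\tilde{\pi}$; this set is finite of cardinality bounded by $\deg\Phi$ times $|V(\Gamma_{\sigma})|$, since $\Phi$ is a finite ramified cover and each preimage of a divisorial valuation is itself divisorial. Crucially, this additional blowing up of $X_{\tilde{\pi}}$ does not destroy property (i), since the images of the original vertices of $\Gamma_{\pi}$ were already built into $F$ in the first step, nor does it destroy commutativity of the diagram.

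The main technical point I expect is justifying the valuation-theoretic input: namely that for a finite morphism $\Phi$, the fiber of $\widetilde{\Phi}$ over a divisorial valuation on $(\mathbb{C}^2,0)$ is a nonempty finite set consisting entirely of divisorial valuations on $(X,0)$. This follows from the fact that $\Phi^{\#}$ realizes $\mathcal{O}_{X,0}$ as a finite extension of $\mathcal{O}_{\mathbb{C}^2,0}$, hence any divisorial valuation has finitely many extensions, all preserving the transcendence degree of the residue field; it is also implicit in the description of $\widetilde{\Phi}$ as a ramified cover at the level of non-archimedean links recalled in Section~\ref{section4}. Once this is granted, the remainder of the argument is a routine application of resolution of indeterminacies on smooth surfaces combined with the universal property of blowups.
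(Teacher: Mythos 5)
Your proposal is correct, but it is organized differently from the paper's proof. The paper takes the minimal $\sigma$ with $\widetilde{\Phi}(V(\Gamma_{\pi}))\subset V(\Gamma_{\sigma})$, then forms the pullback (fiber product) $X'\subset X\times Y$ of $\sigma$ along $\Phi$ and takes the minimal good resolution $X_{\tilde{\pi}}\to X'$; the two projections give $\tilde{\pi}$ and $\Phi_{\tilde{\pi}}$ simultaneously, property (ii) is read off from the mere existence of the morphism $\Phi_{\tilde{\pi}}$ (each valuation in $\widetilde{\Phi}^{-1}(w)$ is realized by a component of $\Phi_{\tilde{\pi}}^{-1}(F_w)\subset\tilde{E}$), and the factorization $\alpha_{\pi}$ is deduced only afterwards from the inclusion $V(\Gamma_{\pi})\subset V(\Gamma_{\tilde{\pi}})$, which uses (i) and (ii). You instead work over $X_{\pi}$ from the start: eliminating the indeterminacy of $\sigma^{-1}\circ\Phi\circ\pi$ by point blowups gives $\alpha_{\pi}$ and the commutative diagram by construction, but then (ii) is no longer automatic, so you add further blowups to extract the finitely many elements of $\widetilde{\Phi}^{-1}(V(\Gamma_{\sigma}))$, and the burden shifts to the valuation-theoretic fact you flag: fibers of $\widetilde{\Phi}$ over divisorial points are nonempty finite sets of divisorial points. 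That fact is correct (finiteness of $\Phi^{\#}$ gives finitely many extensions, all of the same Abhyankar type, hence divisorial), and it is essentially the same input that silently underlies the paper's one-line deduction of (ii). Two points are worth making explicit if you write this up: first, $\sigma^{-1}\circ\Phi\circ\pi$ is a genuine meromorphic map (its graph closure is analytic and proper over $X_{\pi}$ because $\sigma$ is proper), so elimination of indeterminacy by finitely many point blowups and Zariski factorization apply; second, any finite set of divisorial valuations of $(X,0)$ can be extracted from the model $X_{\tilde{\pi}}$ by finitely many further point blowups, which, combined with your (correct) observation that extra blowups preserve (i), the commutativity of the diagram and the simple normal crossings condition, closes the argument. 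In exchange for making the valuation theory explicit, your route avoids resolving the possibly singular fiber product and avoids the a posteriori argument that the resolution dominates $\pi$.
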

\begin{defi}
We call the map $\tilde{\pi}$ defined as above the good resolution of $(X,0)$ \textbf{adapted}  to $\pi$ and $\Phi$.
\end{defi}
\begin{rema}
This definition is a generalization of \cite[Definition 4.13]{BFP}.
\end{rema}
\begin{proof}[Proof of Lemma \ref{adapted}]
   Let  $\sigma: (Y,F) \longrightarrow (\mathbb{C}^2,0)$ be the minimal sequence of blowups of the origin of $\mathbb{C}^2$  such that $\widetilde{\Phi}(V(\Gamma_{\pi}) )\subset  V(\Gamma_{\sigma})$. Consider the  pullback by $\Phi$  of  the resolution $\sigma$  i.e., 
   $$ X'=\overline{ \{(a,b) \in X \backslash 0 \times Y \backslash F \ | \ \Phi(a)=\sigma(b)  \} } \subset X \times Y.$$ Let $p_1: X' \longrightarrow X$ and $p_2: X' \longrightarrow Y$ be the first and second projection respectively.\\
Let $\pi':X_{\tilde{\pi}} \longrightarrow X'$ be the minimal good resolution of $X'$. We consider the morphisms $$\tilde{\pi} := p_1 \circ \pi' : (X_{\tilde{\pi}},\tilde{E}) \longrightarrow (X,0)    \ \ \text{and} \ \        \Phi_{\tilde{\pi}}:=  p_2 \circ \pi': (X_{\tilde{\pi}},\tilde{E}) \longrightarrow (Y,F). $$ The existence of the morphism $\Phi_{\tilde{\pi}}$ implies that for every vertex $w$ of $\Gamma_{\sigma}$ we have $\widetilde{\Phi}^{-1}(w) \subset V(\Gamma_{\tilde{\pi}} )$. Let $v  \in \Gamma_{\pi}$. By definition of $\sigma$, we have  $\widetilde{\Phi}(v) \subset V(\Gamma_{\sigma})$ and then  $v \in V(\Gamma_{\tilde{\pi}})$. Therefore, $V(\Gamma_{\pi}) \subset V(\Gamma_{\tilde{\pi}})$, which means that the resolution $\tilde{\pi}$ factors through $\pi$ . The resulting sequence of blowups is the desired  $\alpha_{\pi}:(X_{\tilde{\pi}},\tilde{E}) \longrightarrow (X_{\pi},E) $
    \end{proof}

We are now ready to prove Point $(i)$ of Theorem \ref{thmD} that we restate in terms of the inner rates function:
 \begin{prop}\label{property}
Let $\pi :(X_{\pi},E) \longrightarrow (X,0)$ be a good resolution of  the complex surface germ $(X,0)$. Let $v$ be  an element of $V(\Gamma_{\pi}) $. There exists a path  from an $f$-node to $v$  in $\Gamma_{\pi}$  along which the function $\mathcal{I}_g^f$ is strictly increasing.

\end{prop}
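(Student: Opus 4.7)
The plan is to reduce the statement to the analogous one on $\mathrm{NL}(\mathbb{C}^2,0)$ via the factorization $\mathcal{I}_g^f = \mathcal{I}_{u_1}^{u_2} \circ \widetilde{\Phi}$ from Proposition \ref{inner-rate function}, using the adapted resolution construction of Lemma \ref{adapted} to lift paths back to $\Gamma_\pi$.

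First I would enlarge $\pi$ to a good resolution $\tilde{\pi}$ that factors through $\pi$, is compatible with $\Phi$, and is adapted to $\pi$ and $\Phi$ in the sense of Lemma \ref{adapted}. Since $\mathcal{I}_g^f$ is piecewise linear on $\Gamma_{\tilde{\pi}}$ and $V(\Gamma_\pi) \subset V(\Gamma_{\tilde{\pi}})$, any strictly increasing path in $\Gamma_{\tilde{\pi}}$ ending at a vertex $v \in V(\Gamma_\pi)$ restricts (by retaining only the visited vertices belonging to $V(\Gamma_\pi)$) to a strictly increasing path in $\Gamma_\pi$. The restriction is a valid path: strict monotonicity forces the lift to be a simple path, so it cannot detour into and return from pendant sub-trees added by $\alpha_\pi$, and between consecutive $V(\Gamma_\pi)$-vertices the lift walks along the subdivision of a $\Gamma_\pi$-edge.

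Second, I would prove the analogue on $(\mathbb{C}^2,0)$ with $(u_1,u_2)=(g,f)$: for any good resolution $\sigma:(Y,F) \to (\mathbb{C}^2,0)$ of $\{u_1 u_2=0\}$, the unique $u_2$-node $w_0$ is connected to every $w \in V(\Gamma_\sigma)$ by a path along which $\mathcal{I}_{u_1}^{u_2}$ is strictly increasing. The proof is by induction on the number of blowups constituting $\sigma$, with base case the one-vertex graph of the first blowup at $0$. For the inductive step I would apply Lemma \ref{recurence} (the polar curve of the identity map being empty) and case-split on the type of point blown up: blowing up a smooth point of $F$ off the strict transforms strictly increases the inner rate at the new vertex; blowing up on $(u_2)^*$ moves the $u_2$-node to a new vertex with strictly smaller inner rate, so one prepends that new edge to every existing path; blowing up on $(u_1)^*$ strictly increases the inner rate at the new vertex; and a double-point blowup subdivides an edge with a weighted-average intermediate inner rate, allowing existing paths crossing that edge to be rerouted through the new vertex while preserving strict monotonicity.

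Finally, given $v \in V(\Gamma_\pi)$, Lemma \ref{adapted}(i) gives $w := \widetilde{\Phi}(v) \in V(\Gamma_\sigma)$. The strictly increasing path from $w_0$ to $w$ in $\Gamma_\sigma$ lifts through the ramified covering $\widetilde{\Phi}|_{\Gamma_{\tilde{\pi}}}$ to a path in $\Gamma_{\tilde{\pi}}$ ending at $v$, which is strictly increasing for $\mathcal{I}_g^f$ by the factorization. Its starting vertex $v_0$ satisfies $\widetilde{\Phi}(v_0)=w_0$, so $E_{v_0}$ dominates $F_{w_0}$; since $f^* = \Phi_{\tilde{\pi}}^{-1}((u_2)^*)$ and the preimage under $\Phi_{\tilde{\pi}}|_{E_{v_0}}$ of the transverse intersection $F_{w_0} \cap (u_2)^*$ is non-empty and lies on $f^*$, we get $f^* \cdot E_{v_0} > 0$, so $v_0$ is an $f$-node of $\Gamma_{\tilde{\pi}}$. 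Restricting the path to $\Gamma_\pi$ concludes: if $v_0 \in V(\Gamma_\pi)$ it is also an $f$-node of $\Gamma_\pi$, and otherwise $v_0$ is a pendant vertex introduced by $\alpha_\pi$ blowing up an $f^*$-intersection, whose unique $\Gamma_\pi$-neighbor is the $f$-node of $\Gamma_\pi$ on which $f^*$ originally met.

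The main obstacle is the inductive analysis of the $(\mathbb{C}^2,0)$ case, particularly the double-point blowup, which requires carefully rerouting every pre-existing monotone path through the subdivision while using that adjacent divisorial valuations over $\mathbb{C}^2$ never share the same inner rate. A secondary technicality is verifying that the starting vertex of the restricted path is an $f$-node of $\Gamma_\pi$, which relies on understanding precisely how $\alpha_\pi$ interacts with the strict transform $f^*$.
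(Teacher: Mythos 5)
Your argument follows the paper's proof in essentially the same way: the base case $(\mathbb{C}^2,0)$ with the identity morphism rests on Lemma \ref{recurence} (your induction on the blowups merely makes the paper's one-line invocation explicit), and the general case proceeds exactly as in the paper by passing to the adapted resolution of Lemma \ref{adapted}, lifting the strictly increasing path in $\Gamma_{\sigma}$ through the ramified covering $\widetilde{\Phi}$ via the factorization $\mathcal{I}_g^f=\mathcal{I}_{u_1}^{u_2}\circ\widetilde{\Phi}$, and transferring it back to $\Gamma_{\pi}$ — your vertex-restriction step is just the paper's pushforward by the map $\tilde{\alpha}_{\pi}$ induced by $\alpha_{\pi}$. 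The extra care you take at the endpoints (checking that the lift starts at an $f$-node and handling vertices created by $\alpha_{\pi}$) is a sound elaboration of details the paper leaves implicit.
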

\begin{rema}This result is a generalization of \cite[Proposition 3.9]{BFP} which treated the case where the morphism $\Phi$ is a linear projection on $\mathbb{C}^2$.
\end{rema}
\begin{proof}The case where $(X,0)=(\mathbb{C}^2,0)$ and $\Phi$ is the identity of $\mathbb{C}^2$ and where $\pi$ is a finite sequence of point blowups is a direct consequence of Lemma \ref{recurence}.

Let us now treat the general case. The statement when $v$ is an $f$-node is trivial by taking the constant path. Assume now that $v$ is not an $f$-node. Let $\tilde{\pi}:(X_{\tilde{\pi}},\tilde{E}) \longrightarrow (X,0)$ be the good resolution of $(X,0)$ adapted to $\pi$, we will use the notations of lemma \ref{adapted}. Let $\tilde{v} \in V(\Gamma_{\tilde{\pi}})$ such that $\alpha_{\pi}(\tilde{v})=v$. Since $v$ is not an $f$-node,   it is also the case for $\tilde{v}$. Let $(u_1,u_2)$ be the coordinates of $\mathbb{C}^2$. Consider $w=\widetilde{\Phi}(\tilde{v})$ and let  $\beta'$ be the simple path in $\Gamma_{\sigma}$ starting from $u_2$-node  and ending at $w$ .   By the particular case $(X,0)=(\mathbb{C}^2,0)$ treated at the beginning of the proof, the function  $\mathcal{I}_{u_1}^{u_2}$ is strictly increasing  along $\beta'$. The path $\beta'$ can be lifted via the ramified covering $\widetilde{\Phi}$ to a union of simple paths each of them  joining an $f$-node to a vertex of $\widetilde{\Phi}^{-1}(w) \subset \Gamma_{\tilde{\pi}}$. Let us choose one of them ending at $\tilde{v}$ and name it $\tilde{\beta}$. Let $\widetilde{ \alpha}_{\pi} : \Gamma_{\tilde{\pi} } \longrightarrow \Gamma_{\pi}$ be the continuous map induced by  $\alpha_{\pi}$. Then, the function $\mathcal{I}_{g}^{f}$ is strictly increasing along the path $\beta =\tilde{\alpha}_{\pi}(\tilde{\beta} )$ which start from an  $f$-node  and ends at $v$.
\end{proof}

\section{Proof of  points $(ii)$ and $(iii)$ of Theorem \ref{thmD}}\label{section7}

Let $(X,0)$ be a complex surface germ with an isolated singularity and $\Phi=(g,f):(X,0) \longrightarrow (\mathbb{C}^2,0)$ be a finite morphism. We start by restating Theorem \ref{thmD} in terms of the inner rates function defined in Section \ref{section5} and the Hironaka  quotients function on $\mathrm{NL}(X,0)$ that we define now. \begin{defi}[{\cite[Definition 4]{MaugendreMichel2017}}]\label{defihironaka}
Let $E_v$ be an irreducible component of $E$. The \textbf{Hironaka quotient} of $E_v$ with respect to $\Phi=(g,f)$  is the number:

$$ h_{g, v}^f :=\frac{ m_{v}(g) }{m_{v}(f)}.$$
\end{defi}
Using Lemma  \ref{recurence} and the same arguments of the proof of Proposition \ref{inner-rate function} we obtain:
\begin{prop}\label{Hironaka function}
There exists a unique continuous function  $$ \mathcal{H}_{g}^{f} : \mathrm{NL}(X,0) \longrightarrow \mathbb{R}_{>0} \cup \infty  $$ such that $\mathcal{H}_g^f(v)=h_{g, v}^{f}$  for every divisorial point $v$ of $\mathrm{NL}(X,0)$  and we have 
  the following commutative diagram
 
 $$
\xymatrix{
  \ar[rd]^{\mathcal{H}_g^f }  \mathrm{NL}(X,0)\ar[r]^{\widetilde{\Phi}} &\mathrm{NL}(\mathbb{C}^2,0) \ar[d]^{\mathcal{H}_{u_1}^{u_2}} \\
  & \mathbb{R}_{>0} \cup \infty 
}$$
 If $\pi$ is a good resolution of $(X,0)$ compatible with the finite morphism $\Phi=(g,f)$ then $\mathcal{H}_g^f$  is linear on the edges of $\Gamma_{\pi}$ with integral slopes.
\end{prop}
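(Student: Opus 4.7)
The plan is to adapt the proof of Proposition \ref{inner-rate function} almost verbatim, using Lemma \ref{recurence} only in its part (ii). Because $\mathrm{NL}(X,0) \cong \varprojlim_{\pi} \Gamma_{\pi}$ by Theorem \ref{universaldualgraph}, it suffices to exhibit, for each good resolution $\pi$ compatible with $\Phi$, a continuous map $\mathcal{H}_g^f : \Gamma_{\pi} \to \mathbb{R}_{>0} \cup \infty$ whose value at every vertex $v$ is $h_{g,v}^f$ and which is linear with integer slope on each edge for the metric $\length_f$, these maps being mutually compatible under the retractions $r_{\pi}$ so that they glue to a unique continuous function on $\mathrm{NL}(X,0)$.

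On a fixed $\Gamma_{\pi}$, I would set $\mathcal{H}_g^f(v) := m_v(g)/m_v(f)$ at each vertex $v$ and extend affinely along every edge $e_{vv'}$. The slope on $e_{vv'}$ then equals
\[
m_v(f)\,m_{v'}(f)\bigl(h_{g,v'}^f - h_{g,v}^f\bigr) \;=\; m_v(f)\,m_{v'}(g) - m_{v'}(f)\,m_v(g) \;\in\; \mathbb{Z},
\]
settling the integrality claim. For compatibility under further blowups it suffices, by density of divisorial valuations along each edge, to treat a single blowup of a double point $E_v \cap E_{v'}$: if $w$ is the new vertex, Lemma \ref{recurence}(ii) gives $m_w(g) = m_v(g)+m_{v'}(g)$ and $m_w(f) = m_v(f)+m_{v'}(f)$. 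The two sub-edges $e_{vw}$ and $e_{wv'}$ have lengths $1/(m_v(f)m_w(f))$ and $1/(m_w(f)m_{v'}(f))$, dividing $e_{vv'}$ in the ratio $m_{v'}(f) : m_v(f)$, and a one-line computation then confirms that the affine interpolation at this point equals $(m_v(g)+m_{v'}(g))/(m_v(f)+m_{v'}(f)) = h_{g,w}^f$, as required.

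For the commutative diagram, observe that for a divisorial $v$ one has $\widetilde{\Phi}(v)(u_i) = v(u_i \circ \Phi)$, and therefore
\[
\mathcal{H}_{u_1}^{u_2}\bigl(\widetilde{\Phi}(v)\bigr) \;=\; \frac{\widetilde{\Phi}(v)(u_1)}{\widetilde{\Phi}(v)(u_2)} \;=\; \frac{v(g)}{v(f)} \;=\; \frac{m_v(g)/m_v(\mathfrak{M})}{m_v(f)/m_v(\mathfrak{M})} \;=\; h_{g,v}^f \;=\; \mathcal{H}_g^f(v),
\]
and density of divisorial valuations together with continuity of both sides promotes the identity to all of $\mathrm{NL}(X,0)$; uniqueness of $\mathcal{H}_g^f$ follows from the same density. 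No step is genuinely hard: the only subtle point is the interpolation verification above, and it is precisely to make it work that the metric $\length_f$ with edge length $1/(m_v(f)m_{v'}(f))$ was chosen --- it converts the map $v \mapsto m_v(g)/m_v(f)$ into a piecewise linear function realizing the barycentric rule of Lemma \ref{recurence}(ii).
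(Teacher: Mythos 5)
Your proposal is correct, and for the existence/continuity/piecewise-linearity part it follows exactly the route the paper takes: the paper's proof simply says that these follow from ``the arguments of the proof of Proposition \ref{inner-rate function}'', i.e.\ the inverse-limit description of $\mathrm{NL}(X,0)$, affine interpolation along edges, and the compatibility under blowup of a double point supplied by Lemma \ref{recurence}$(ii)$ --- which is precisely your barycentric computation, and your slope computation $m_v(f)m_{v'}(g)-m_{v'}(f)m_v(g)\in\mathbb{Z}$ is the right integrality check. The only place you genuinely diverge is the commutative diagram: the paper obtains it by citing \cite[Remark 5]{MaugendreMichel2017}, whereas you prove it directly by observing that at a divisorial point the Hironaka quotient is the ratio $v(g)/v(f)$ of (normalized) valuation values, which is manifestly intertwined by $\widetilde{\Phi}(v)=v\circ\Phi^{\#}$ since the normalizing constant cancels in the ratio, and then extending by density of divisorial points and continuity. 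This self-contained verification is sound (it uses only that $\widetilde{\Phi}$ sends divisorial points to divisorial points and is continuous, facts the paper already invokes) and arguably makes the statement independent of the external reference; the paper's citation buys brevity, your computation buys transparency.
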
 
\begin{proof}
The commutative diagram of this proposition is a consequence of \cite[Remark 5]{MaugendreMichel2017}. We get the rest with the arguments of the proof of Proposition \ref{inner-rate function}\end{proof}
\begin{defi}
The function $\mathcal{H}_g^f$ is called the \textbf{Hironaka quotients function} with respect to the morphism $\Phi=(g,f)$.
\end{defi}



\begin{defi}
We denote by $\mathcal{A}_{\pi}$   the union of every simple paths joining an $f$-node to a $g$-node along which the inner rates function $\mathcal{I}_{g}^{f}$ is strictly increasing.
\end{defi}
Now, we can restate Theorem \ref{thmD} in terms of the inner rates functions and the Hironaka quotients function.

\begin{thm}\label{touten1}
Let $(X,0)$ be a complex surface germ with an isolated singularity and let $\Phi=(g,f)=(u_1,u_2):(X,0) \longrightarrow (\mathbb{C}^2,0)$ be a finite morphism. Let $\pi:(X_{\pi},E) \longrightarrow (X,0)$ be a good resolution of $(X,0)$. Then:
\begin{enumerate}

\item Let $v$ be a vertex of $\Gamma_{\pi}$. There exists a path from an f-node to v in $\Gamma_{\pi}$ along which the function $\mathcal{I}_g^f$ is strictly increasing.

\item The functions $\mathcal{I}_g^f$ and $\mathcal{H}_g^f$  coincide on  $\mathcal{A}_{\pi}$.

\item The Hironaka quotients function $\mathcal{H}_g^f$ restricted to $\Gamma_{\pi}$ is constant on every connected component of the topological closure of $\Gamma_{\pi} \backslash \mathcal{A}_{\pi}$.
\end{enumerate}
\end{thm}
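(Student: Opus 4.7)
My plan splits by the three points. Point (i) is exactly Proposition \ref{property}, established in Section \ref{section6}, so no further argument is needed. Point (iii) is \cite[Theorem 1]{MaugendreMichel2017}, which can be invoked directly once the subgraph $\mathcal{A}_\pi$ has been fixed by (ii). The substantive step is therefore (ii), which I propose to prove by first reducing to the case $(X,0)=(\mathbb{C}^2,0)$ with $\Phi=\mathrm{id}$ via the adapted resolution of Lemma \ref{adapted}, and then treating that case by induction on the blowups composing $\sigma$.

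For the reduction, apply Lemma \ref{adapted} to obtain the adapted resolution $\tilde\pi$, a sequence of blowups $\sigma:(Y,F)\to(\mathbb{C}^2,0)$ and a morphism $\widetilde{\Phi}:\Gamma_{\tilde\pi}\to\Gamma_\sigma$ of graphs. Propositions \ref{inner-rate function} and \ref{Hironaka function} yield $\mathcal{I}_g^f=\mathcal{I}_{u_1}^{u_2}\circ\widetilde{\Phi}$ and $\mathcal{H}_g^f=\mathcal{H}_{u_1}^{u_2}\circ\widetilde{\Phi}$. Since $\widetilde{\Phi}$ sends $f$-nodes to $u_2$-nodes and $g$-nodes to $u_1$-nodes, and since the strict monotonicity of $\mathcal{I}_g^f$ along a path forces the image walk in $\Gamma_\sigma$ to be strictly monotone (hence a simple path), we obtain $\widetilde{\Phi}(\mathcal{A}_{\tilde\pi})\subset\mathcal{A}_\sigma$. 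Combined with the inclusion $\mathcal{A}_\pi\subset\mathcal{A}_{\tilde\pi}$, this reduces (ii) to proving $\mathcal{I}_{u_1}^{u_2}=\mathcal{H}_{u_1}^{u_2}$ on $\mathcal{A}_\sigma\subset\Gamma_\sigma$ in the case where $\Phi$ is the identity of $(\mathbb{C}^2,0)$.

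In this $\mathbb{C}^2$-case, I induct on the number of point blowups composing $\sigma$. The base case is the single blowup of the origin: the unique exceptional curve is simultaneously a $u_1$- and $u_2$-node with $m(u_1)=m(u_2)=1$, so Lemma \ref{recurence'} gives $q=h=1$. For the inductive step, let $E_w$ be the component created by blowing up a point $p$ and distinguish the cases of Lemma \ref{recurence}. If $p$ is a smooth point of $E_v$ not meeting any strict transform, then $E_w$ is a leaf that is neither a $u_1$- nor a $u_2$-node, so $E_w\notin\mathcal{A}_{\sigma'}$ and the inductive hypothesis carries through. If $p$ lies on $u_1^*$ (symmetrically on $u_2^*$), then $E_w$ becomes a $u_1$-node and Lemma \ref{recurence'} directly yields $q_w=h_w$; any new $\mathcal{A}$-path ending at $E_w$ extends an $\mathcal{A}_\sigma$-path ending at $E_v$ by the edge $v$--$w$, with $q_w>q_v$ ensuring monotonicity. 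If $p$ is a double point $E_v\cap E_{v'}$ with the edge $v$--$v'$ lying in $\mathcal{A}_\sigma$, then $m_w(u_i)=m_v(u_i)+m_{v'}(u_i)$ and $q_w$ is the convex combination of $q_v,q_{v'}$ weighted by $m_v(u_2),m_{v'}(u_2)$; substituting the inductive identities $q_v=h_v$, $q_{v'}=h_{v'}$ yields the mediant $q_w=(m_v(u_1)+m_{v'}(u_1))/(m_v(u_2)+m_{v'}(u_2))=h_w$, and strict monotonicity on the refined path $\dots$--$v$--$w$--$v'$--$\dots$ is automatic.

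The main obstacle I anticipate is the combinatorial bookkeeping at each inductive step: one must verify that whenever a new vertex $w$ enters $\mathcal{A}_{\sigma'}$, the simple path through $w$ either is the refinement of an $\mathcal{A}_\sigma$-path whose endpoints already satisfy $q=h$ (Case C), or the extension by one edge of such a path terminated at the new node $w$ (Case B), so that the inductive equality propagates cleanly. A particular subtlety is that blowing up a point on a strict transform can shift the $u_1$- or $u_2$-node status from an existing vertex to the new one, so the decomposition of $\mathcal{A}_{\sigma'}$ into simple paths is not strictly an enlargement of that of $\mathcal{A}_\sigma$. Once (ii) is established, point (iii) follows immediately by invoking \cite[Theorem 1]{MaugendreMichel2017}, which asserts that the Hironaka quotients are constant on each connected component of $\overline{\Gamma_\pi\setminus\mathcal{A}_\pi}$.
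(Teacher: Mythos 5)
Your treatment of points (i) and (ii) follows the paper's route: (i) is indeed just Proposition \ref{property}, and your proof of (ii) — reduction to $(\mathbb{C}^2,0)$ with $\Phi=\mathrm{id}$ via the adapted resolution of Lemma \ref{adapted} and the two commutative diagrams of Propositions \ref{inner-rate function} and \ref{Hironaka function}, with the $\mathbb{C}^2$-case handled blowup by blowup through Lemmas \ref{recurence} and \ref{recurence'} — is exactly the paper's argument, with the induction you spell out being what the paper leaves as a ``direct consequence'' of those two lemmas.

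Point (iii), however, contains a genuine gap. You quote \cite[Theorem 1]{MaugendreMichel2017} as asserting that $\mathcal{H}_g^f$ is constant on the connected components of $\overline{\Gamma_{\pi}\setminus\mathcal{A}_{\pi}}$, but that theorem (Theorem \ref{hironakagrowth} here) is stated for the set $\mathcal{G}_{\pi}$, the union of paths from an $f$-node to a $g$-node along which the \emph{Hironaka quotients} function $\mathcal{H}_g^f$ is strictly increasing — it cannot involve $\mathcal{A}_{\pi}$, which is defined via the inner rates introduced in this paper. From point (ii) you only get the inclusion $\mathcal{A}_{\pi}\subset\mathcal{G}_{\pi}$ (on $\mathcal{A}_{\pi}$ one has $\mathcal{I}_g^f=\mathcal{H}_g^f$, so strict growth of $\mathcal{I}_g^f$ gives strict growth of $\mathcal{H}_g^f$). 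But then $\overline{\Gamma_{\pi}\setminus\mathcal{G}_{\pi}}\subset\overline{\Gamma_{\pi}\setminus\mathcal{A}_{\pi}}$, and a connected component of the larger set may contain pieces of $\mathcal{G}_{\pi}\setminus\mathcal{A}_{\pi}$, on which Maugendre--Michel allows $\mathcal{H}_g^f$ to vary; constancy on the smaller components does not transfer. What is needed, and what the paper proves, is the reverse inclusion $\mathcal{G}_{\pi}\subset\mathcal{A}_{\pi}$, obtained by showing that $\mathcal{I}_g^f$ and $\mathcal{H}_g^f$ also coincide on $\mathcal{G}_{\pi}$: this is again established first for $(\mathbb{C}^2,0)$ with $\Phi=\mathrm{id}$ (Lemmas \ref{recurence} and \ref{recurence'} give $\mathcal{I}_{u_1}^{u_2}=\mathcal{H}_{u_1}^{u_2}$ on $\mathcal{G}_{\sigma}$) and then lifted through the two commutative diagrams exactly as in (ii). With $\mathcal{A}_{\pi}=\mathcal{G}_{\pi}$ in hand, (iii) does follow from \cite[Theorem 1]{MaugendreMichel2017}; without it, your deduction does not go through.
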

\begin{rema}
Point  $(i)$ is exactly Proposition \ref{property}. It implies in particular that the inner rates function is not locally constant on $\Gamma_{\pi}$.
\end{rema}

The rest of this section is devoted to the proof of Points $(ii)$ and $(iii)$.

\begin{proof}[Proof of Point $(ii)$ of Theorem \ref{touten1}]The case where $(X,0) = (\mathbb{C}^2,0)$ and $\Phi$ is  the  identity  map of $(\mathbb{C}^2,0)$ is a direct consequence of  Lemmas \ref{recurence} and \ref{recurence'}. Notice that in this case $\mathcal{A}_{\pi}$ is a simple path joining the $u_2$-node to the $u_1$-node where $(u_1,u_2)$ are the coordinates of $\mathbb{C}^2$.

Let $\tilde{\pi}:(X_{\tilde{\pi}},\tilde{E}) \longrightarrow (X,0)$ be the good resolution of $(X,0)$ adapted to $\pi$, we use again the commutative diagram of Lemma \ref{adapted}. We  need  to prove that  $\widetilde{\Phi}(\mathcal{A}_{\pi} ) \subset \mathcal{A}_{\sigma} $. Let $\alpha: [0,1] \longrightarrow \mathcal{A}_{\tilde{\pi}} $ be a path from an $f$-node to a $g$-node along which the inner rates function $\mathcal{I}_g^f$ is strictly increasing. The existence of the morphism $\Phi_{\tilde{\pi}} $ implies that every $f$-node (resp. $g$-node) is sent by $\widetilde{\Phi}$ to the $u_2$-node (resp. $u_1$-node) of $\Gamma_{\sigma}$. Thus, $\beta(t)=\widetilde{\Phi}(\alpha(t))$ is a path starting from the $u_2$-node and ending in the $u_1$-node. By Proposition \ref{inner-rate function}  we have the following diagram $$\xymatrix{
  \ar[rd]^{\mathcal{I}_g^f }  \mathrm{NL}(X,0)\ar[r]^{\widetilde{\Phi}} &\mathrm{NL}(\mathbb{C}^2,0) \ar[d]^{\mathcal{I}_{u_1}^{u_2}} \\
  & \mathbb{R}_{>0} \cup \{+\infty \}
}$$ Then $\mathcal{I}_{u_1}^{u_2} ( \beta(t)) =( \mathcal{I}_{u_1}^{u_2} \circ \widetilde{\Phi})(\alpha(t)) =\mathcal{I}_{g}^{f}(\alpha(t))$ which means that the function $\mathcal{I}_{u_1}^{u_2}(\beta(t))$ is strictly increasing. By definition, the curve $\widetilde{\Phi}(\alpha)=\beta$ is included in $\mathcal{A}_{\sigma}$  and it follows that $\widetilde{\Phi}(\mathcal{A}_{\tilde{\pi}} ) \subset \mathcal{A}_{\sigma}$. Since  $\alpha_{\pi}$  is a sequence of blowups of $X_\pi$ then $ \mathcal{A}_{\pi} \subset \mathcal{A}_{\tilde{\pi}} $ which implies that $\widetilde{\Phi}(\mathcal{A}_{\pi}) \subset \widetilde{\Phi}(\mathcal{A}_{\tilde{\pi}} ) \subset \mathcal{A}_{\sigma}$. Now, let $v \in \mathcal{A}_{\pi}$, we know that the functions $\mathcal{I}_{u_1}^{u_2}$ and $ \mathcal{H}_{u_1}^{u_2}$ coincide on $\mathcal{A}_{\sigma} $ which implies that \begin{equation}\label{hironakalaststand'} \mathcal{I}_g^f(v)=(\mathcal{I}_{u_1}^{u_2} \circ \widetilde{\Phi})(v)=(\mathcal{H}_{u_1}^{u_2} \circ \widetilde{\Phi}) (v). \end{equation} 
\noindent On the other hand, by  proposition \ref{Hironaka function}, we have  the following commutative diagram
$$\xymatrix{
  \ar[rd]^{\mathcal{H}_g^f }  \mathrm{NL}(X,0)\ar[r]^{\widetilde{\Phi}} &\mathrm{NL}(\mathbb{C}^2,0) \ar[d]^{\mathcal{H}_{u_1}^{u_2}} \\
  & \mathbb{R}_{>0} \cup \{+ \infty \},
}$$ which implies that $(\mathcal{H}_{u_1}^{u_2} \circ \widetilde{\Phi}) (v)=\mathcal{H}_{g}^{f}(v)$. Finally, by equation \eqref{hironakalaststand'},  it follow that $$ \mathcal{I}_g^f(v)=\mathcal{H}_{g}^{f}(v).$$  
\end{proof}

\begin{exam}
This example illustrates the statement of Theorem \ref{touten1} in the case $(X,0)=(\mathbb{C}^2,0)$ and $\Phi(x,y)=(g(x,y),f(x,y))=(x+y,y^5-x^{12})$.
  \noindent The following graph is the dual graph of the minimal good resolution $\pi$ of the curve $(gf)^{-1}(0)$   \begin{figure}[H]
\begin{tikzpicture}[node distance=3.8cm, very thick]
 \tikzstyle{titleVertex}      = [ shape=circle,node distance=4cm]
  \tikzstyle{inVertex}      = [ shape=circle,node distance=2cm]
\tikzstyle{Vertex}      = [fill, shape=circle, line cap=round,line join=round,>=triangle 45,scale=.4,font=\scriptsize]
  \tikzstyle{Edge}        = [black]
    \tikzstyle{arrowEdge}        = [black, ->]  
 \tikzstyle{arrowEdge'}        = [black, -<]

  \node[Vertex]      (1)               {};

     \node[Vertex]      (2)  [right       of=1] {};
  \node[Vertex]      (3)  [ right of=2]  {};
  \node[Vertex]      (4)  [right  of=3]  {};
  \node[Vertex]      (5)  [below        of=4] {};
  \node[Vertex]      (6)  [below       of=5] {};
     \node[inVertex] (0)  [left     of=1]  {$(1)$};
                       \node[inVertex] (7)  [ right     of=4]  {$(1)$};

          \path 
            (1) edge[Edge] (2)
      (2) edge[Edge] (3)
             (3) edge[Edge] (4)
               (4) edge[Edge] (5)
                (1) edge[arrowEdge]  (0)
                               (4) edge[arrowEdge']  (7)
                                                                                             (5) edge[Edge] (6);
\draw (1) node[below] {$\frac{1}{5}$}
          (2) node[below] {$\frac{1}{10}$}
          (3) node [below]{$\frac{3}{35}$}
           (4) node [below left ]{$\frac{1}{12}$}
            (5) node [right]{$$}
             (6) node [right]{$$}
              (1) node[above] {$(\frac{1}{5})$}
          (2) node[above] {$(\frac{1}{10})$}
          (3) node [above]{$(\frac{3}{35})$}
           (4) node [above]{$(\frac{1}{12})$}
            (5) node [right]{$(\frac{1}{12})$}
             (6) node [right]{$(\frac{1}{12})$}
                 (5) node [left]{$\frac{1}{8}$}
             (6) node [left]{$\frac{1}{4}$};

  \end{tikzpicture}
  \caption{The graph $\Gamma_{\pi}$ is weighted with the inner rates (without parenthesis) and the Hironaka quotients (between parenthesis). }\end{figure}
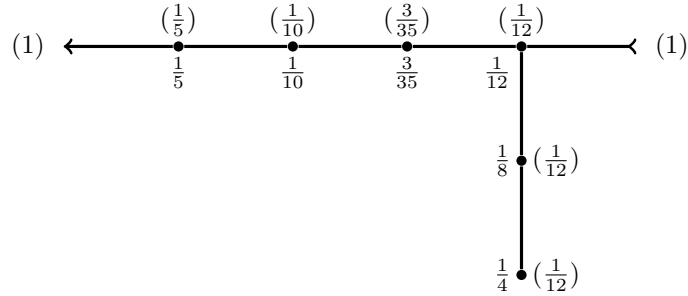

\end{exam}

 We now recall the growth behaviour of the Hironaka quotients function which was proved  by Maugendre and Michel in \cite{MaugendreMichel2017}.

\begin{thm}[{\cite[Theorem 1]{MaugendreMichel2017}}] \label{hironakagrowth}
We denote by $\mathcal{G}_{\pi}$ the union of every path  joining an $f$-node to a $g$-node along which the Hironaka quotients function $\mathcal{H}_{g}^{f}$ is strictly increasing. Let $(X,0)$ be a complex surface germ with an isolated singularity and $\Phi=(g,f):(X,0) \longrightarrow (\mathbb{C}^2,0)$ be a finite morphism. Let $\pi:(X_{\pi},E) \longrightarrow (X,0)$ be a good resolution of $(X,0)$. The Hironaka quotients function $\mathcal{H}_g^f$ restricted to $\Gamma_{\pi}$ is constant on every connected component of the topological closure of $\Gamma_{\pi} \backslash \mathcal{G}_{\pi}$.
\end{thm}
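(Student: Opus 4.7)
The plan is to follow the same two-step strategy used for Theorem \ref{touten1}(ii): establish the base case $(X,0)=(\mathbb{C}^2,0)$ with $\Phi=\mathrm{id}$, then transfer to the general case via the pullback identity $\mathcal{H}_g^f=\mathcal{H}_{u_1}^{u_2}\circ\widetilde{\Phi}$ from Proposition \ref{Hironaka function}.

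For the base case, fix a good resolution $\sigma:(Y,F)\to(\mathbb{C}^2,0)$. The key input is Lemma \ref{recurence}(ii): when one blows up an intersection $F_v\cap F_{v'}$, the new Hironaka quotient is the mediant $h_w=(m_v(u_1)+m_{v'}(u_1))/(m_v(u_2)+m_{v'}(u_2))$, which lies strictly between $h_v$ and $h_{v'}$ when they differ and equals their common value otherwise; in particular $\mathcal{H}_{u_1}^{u_2}$ is linear on each edge of $\Gamma_\sigma$ (already observed in Proposition \ref{Hironaka function}). Starting from the single blowup of the origin—where $\Gamma$ has one vertex $F_0$ with $h_{F_0}=1$ carrying the $u_1^*$- and $u_2^*$-arrows—an induction on the number of further blowups shows that $\Gamma_\sigma$ possesses a unique simple path joining the $u_2$-node to the $u_1$-node, that $\mathcal{H}_{u_1}^{u_2}$ is strictly monotone along this path (going from values approaching $0$ near the $u_2$-node to values approaching $\infty$ near the $u_1$-node), and that every subtree hanging off this main path is constant for $\mathcal{H}_{u_1}^{u_2}$, equal to the value at the attachment vertex. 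Hence $\mathcal{G}_\sigma$ is precisely this main path, and the theorem holds for $\sigma$.

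For the general case, apply Lemma \ref{adapted} to obtain the adapted resolution $\tilde{\pi}$ fitting into a commutative diagram with some $\sigma:(Y,F)\to(\mathbb{C}^2,0)$. The existence of $\Phi_{\tilde{\pi}}:X_{\tilde{\pi}}\to Y$ forces $\widetilde{\Phi}$ to send every $f$-node of $\Gamma_{\tilde{\pi}}$ to the $u_2$-node of $\Gamma_\sigma$ and every $g$-node to the $u_1$-node. Using the pullback formula together with path-lifting for the ramified cover $\widetilde{\Phi}|_{\Gamma_{\tilde{\pi}}}\to\Gamma_\sigma$ (exactly as in the proof of Theorem \ref{touten1}(ii)), one verifies the equivalence $v\in\mathcal{G}_{\tilde{\pi}}\iff\widetilde{\Phi}(v)\in\mathcal{G}_\sigma$. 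Combined with the base case, this proves the theorem for $\tilde{\pi}$. Finally, the descent from $\tilde{\pi}$ to $\pi$ exploits the inclusion $\Gamma_\pi\subset\Gamma_{\tilde{\pi}}$ arising from $\alpha_\pi$: on each subdivided edge of $\Gamma_\pi$ introduced in $\Gamma_{\tilde{\pi}}$ that does not meet strict transforms of $f$, $g$ or $\Pi_\Phi$, Lemma \ref{recurence}(i) shows $\mathcal{H}_g^f$ is locally constant, so the connected components of $\overline{\Gamma_\pi\setminus\mathcal{G}_\pi}$ are unions of connected components of $\overline{\Gamma_{\tilde{\pi}}\setminus\mathcal{G}_{\tilde{\pi}}}$ glued along arcs where $\mathcal{H}_g^f$ is constant.

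The main obstacle is the last step: one must carefully compare the $f$/$g$-node structure in $\Gamma_\pi$ with that in its refinement $\Gamma_{\tilde{\pi}}$ (the $f$- and $g$-nodes can shift under blowup of intersection points with $f^*$ or $g^*$), and rule out the possibility that a strictly-monotone $f$-to-$g$ path appears only after refinement without a counterpart in $\Gamma_\pi$. The mediant formula of Lemma \ref{recurence} is the key technical tool here, as it shows that subdivisions created by further blowups never break the constancy needed off $\mathcal{G}_\pi$.
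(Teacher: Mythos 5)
First, note that the paper itself offers no proof of Theorem \ref{hironakagrowth}: it is quoted verbatim from Maugendre--Michel \cite[Theorem 1]{MaugendreMichel2017} and used as an external input (in the proof of Theorem \ref{touten1}$(iii)$ the paper reduces to it by showing $\mathcal{A}_{\pi}=\mathcal{G}_{\pi}$). So your argument is necessarily independent of the paper; its overall shape --- base case $(\mathbb{C}^2,0)$ with $\Phi=\mathrm{id}$ via Lemma \ref{recurence}, then transfer through the identity $\mathcal{H}_g^f=\mathcal{H}_{u_1}^{u_2}\circ\widetilde{\Phi}$ of Proposition \ref{Hironaka function} and the adapted resolution of Lemma \ref{adapted} --- is the natural one and parallels the paper's treatment of $\mathcal{A}_{\pi}$ in Sections \ref{section6}--\ref{section7}. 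The base case (unique monotone chain between the $u_1$- and $u_2$-nodes, constancy on the hanging subtrees, via the free-point and mediant formulas) is essentially correct.

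There are, however, two genuine gaps. (a) Of the asserted equivalence $v\in\mathcal{G}_{\tilde{\pi}}\Leftrightarrow\widetilde{\Phi}(v)\in\mathcal{G}_{\sigma}$, only the forward implication follows as in the proof of Theorem \ref{touten1}$(ii)$. The backward implication is the one you actually need (to get $\widetilde{\Phi}\bigl(\Gamma_{\tilde{\pi}}\setminus\mathcal{G}_{\tilde{\pi}}\bigr)\subset\overline{\Gamma_{\sigma}\setminus\mathcal{G}_{\sigma}}$ and then pull back constancy), and it requires proving that a lift through $v$ of the monotone path of $\Gamma_{\sigma}$ stays inside $\Gamma_{\tilde{\pi}}$ and terminates at an $f$-node and a $g$-node, i.e.\ that every vertex of $\Gamma_{\tilde{\pi}}$ sent by $\widetilde{\Phi}$ to the $u_2$-node (resp.\ $u_1$-node) actually meets $f^*$ (resp.\ $g^*$). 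The paper only establishes the opposite direction (nodes map to nodes, from the existence of $\Phi_{\tilde{\pi}}$); your "one verifies" hides the real content. (b) The descent from $\tilde{\pi}$ to $\pi$ is not carried out, and the tool you invoke is misquoted: Lemma \ref{recurence}$(i)$ concerns blowing up a \emph{free} point (a new leaf with the same quotient), whereas the subdivisions of edges of $\Gamma_{\pi}$ created by $\alpha_{\pi}$ fall under the double-point case $(ii)$, where $\mathcal{H}_g^f$ takes the mediant value and is constant only when the two endpoints already have equal Hironaka quotients --- on $\overline{\Gamma_{\pi}\setminus\mathcal{G}_{\pi}}$ that is precisely the statement to be proved, so as written this step is circular. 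What is needed is an argument that $\Gamma_{\pi}\setminus\mathcal{G}_{\pi}\subset\overline{\Gamma_{\tilde{\pi}}\setminus\mathcal{G}_{\tilde{\pi}}}$ (equivalently, that a strictly increasing $f$-to-$g$ path of $\Gamma_{\tilde{\pi}}$ through a point of $\Gamma_{\pi}$ retracts to one in $\Gamma_{\pi}$), taking into account that $\pi$ is only assumed to resolve $(X,0)$, so $f^*$ and $g^*$ may pass through double points and the $f$/$g$-nodes can shift or multiply under $\alpha_{\pi}$ --- the very difficulty you flag as "the main obstacle" but leave open. Until (a) and (b) are supplied, the proposal is a plausible plan rather than a proof.
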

\begin{proof}[Proof of  Point $(iii)$ of Theorem \ref{touten1}] By Theorem \cite{MaugendreMichel2017} it suffices to prove that $\mathcal{A}_{\pi} = \mathcal{G}_{\pi}$. Theorem \ref{touten1}$(ii)$ implies directly the inclusion $\mathcal{A}_{\pi} \subset \mathcal{G}_{\pi}$.  Then, we need to prove the inclusion $\mathcal{G}_{\pi} \subset \mathcal{A}_{\pi}$. For that aim,  we prove that the functions $\mathcal{I}_g^f$ and $\mathcal{H}_g^f$ coincide on $\mathcal{G}_{\pi}$. In the case where $(X,0)=(\mathbb{C}^2,0)$, we get directly from Lemmas \ref{recurence} and \ref{recurence'} that $\mathcal{I}_{u_1}^{u_2}$ and $\mathcal{H}_{u_1}^{u_2}$ coincide on $\mathcal{G}_{\sigma}$. The rest of the proof is easily adapted from the arguments of the proof of  Theorem \ref{touten1}$(ii)$ by using the two commutative diagrams of Propositions \ref{inner-rate function} and \ref{Hironaka function}.
\end{proof}



\section{A first application of the inner rates formula}\label{section8}

As a first application of the inner rates formula \ref{laplacien}, we give a new proof of \cite[Theorem 4.9]{Michel2008}.

\begin{thm}[{\cite[Theorem 4.9]{Michel2008}}]\label{francoise}  Let $(X,0)$ be a complex surface germ with an isolated singularity and let $g,f:(X,0) \longrightarrow (\mathbb{C},0)$ be two holomorphic functions on $X$ such that the morphism $\Phi=(g,f): (X,0) \longrightarrow (\mathbb{C}^2,0)$ is finite. Let $\pi :(X_{\pi},E) \longrightarrow (X,0) $ be a good resolution of $(X,0)$. Let $\mathcal{Z}$ be  a connected component of $\overline{\Gamma_{\pi} \backslash \mathcal{A}_{\pi}}$ or a single vertex on the complementary of  $\overline{\Gamma_{\pi} \backslash \mathcal{A}_{\pi}}$, then :


$$  \sum_{v \in \mathcal{Z} } m_v(f) \Pi_\Phi^* \cdot   E_v=- \left(  \sum_{v \in \mathcal{Z}} m_v(f) \chi'_v  \right).$$ where $\chi'_v:=2-2g_v-\mathrm{val}(v) -f^* \cdot E_v-g^* \cdot E_v$

\end{thm}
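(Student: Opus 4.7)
The strategy is to apply the inner rates formula (Theorem~\ref{thmA}) at each vertex $v \in \mathcal{Z}$, multiply by $m_v(f)$, and sum; then show the residual sum vanishes using Theorem~\ref{thmD} together with Laufer's identity. Using the relation $\chi'_v = 2 - 2g_v - \mathrm{val}(v) - f^* \cdot E_v - g^* \cdot E_v$, the inner rates formula can be rewritten as
\[
\sum_i m_i(f) q_{g,i}^f\,(E_i \cdot E_v) \;=\; -\chi'_v - g^* \cdot E_v - \Pi_\Phi^* \cdot E_v.
\]
Multiplying by $m_v(f)$ and summing over $v \in \mathcal{Z}$ yields
\[
\sum_{v \in \mathcal{Z}} m_v(f)\big(\chi'_v + \Pi_\Phi^* \cdot E_v\big) \;=\; -\sum_{v \in \mathcal{Z}} m_v(f)\, g^* \cdot E_v \;-\; \sum_{v \in \mathcal{Z}} m_v(f) \sum_i m_i(f) q_{g,i}^f\, E_i \cdot E_v.
\]
Applying Proposition~\ref{laufer} to $g$ gives $-g^*\cdot E_v = \sum_i m_i(g)\,E_i\cdot E_v$, so after interchanging summations, showing the formula reduces to proving
\[
\sum_i \big(m_i(g) - m_i(f) q_{g,i}^f\big)\Big(\sum_{v \in \mathcal{Z}} m_v(f)\,(E_i \cdot E_v)\Big) \;=\; 0.
\]

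We treat each $i$-term separately. If $i \in \mathcal{A}_\pi$, Theorem~\ref{thmD}(ii) yields $q_{g,i}^f = m_i(g)/m_i(f)$ and the coefficient vanishes. For $i \notin \mathcal{A}_\pi$, the key topological observation is that $\mathcal{A}_\pi$ is a subgraph (so each edge is either entirely in or entirely out of $\mathcal{A}_\pi$), and therefore any edge $(i,j) \not\subset \mathcal{A}_\pi$ has its interior in a single open component of $\Gamma_\pi \setminus \mathcal{A}_\pi$, forcing both endpoints to lie in the closure of that component. In particular, every neighbor of $i$ belongs to the same connected component of $\overline{\Gamma_\pi \setminus \mathcal{A}_\pi}$ as $i$ itself. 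Hence if $i \notin \mathcal{Z}$, no neighbor of $i$ lies in $\mathcal{Z}$ and the parenthesized factor vanishes. If instead $i \in \mathcal{Z} \setminus \mathcal{A}_\pi$, all neighbors of $i$ lie in $\mathcal{Z}$, so
\[
\sum_{v \in \mathcal{Z}} m_v(f)\,(E_i \cdot E_v) \;=\; \sum_{v \in V(\Gamma_\pi)} m_v(f)\,(E_i \cdot E_v) \;=\; -f^* \cdot E_i
\]
by Laufer's identity for $f$; if $i$ is an $f$-node, Lemma~\ref{recurence'} forces $q_{g,i}^f = m_i(g)/m_i(f)$ and the coefficient vanishes, while otherwise $f^* \cdot E_i = 0$ directly.

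The case where $\mathcal{Z}$ is a single vertex in the complement of $\overline{\Gamma_\pi \setminus \mathcal{A}_\pi}$ is handled analogously, using that every edge incident to such a vertex lies entirely in $\mathcal{A}_\pi$, so the only nonzero $E_i \cdot E_v$ terms correspond to $i \in \mathcal{A}_\pi$. The main technical obstacle is the topological rigidity alluded to above: making precise the claim that an edge not lying in $\mathcal{A}_\pi$ has both endpoints in a single connected component of $\overline{\Gamma_\pi \setminus \mathcal{A}_\pi}$, so that the intersection computations do not mix contributions from different components. Once this is established, the three-case analysis closes the proof, and only the inner rates formula, Theorem~\ref{thmD}(ii) (together with Lemma~\ref{recurence'} for nodes), and Laufer's identity are needed.
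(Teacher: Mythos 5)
Your proposal follows the same skeleton as the paper's proof: apply the inner rates formula at each vertex of $\mathcal{Z}$, absorb $g^*\cdot E_v$ via Proposition~\ref{laufer}, multiply by $m_v(f)$ and sum, so that the theorem reduces to the vanishing of $\sum_i \bigl(m_i(g)-m_i(f)q_{g,i}^f\bigr)\bigl(\sum_{v\in\mathcal{Z}}m_v(f)\,E_i\cdot E_v\bigr)$, which is (up to sign) exactly the sum $A$ of the paper. Where you genuinely differ is in how this residual sum is killed. The paper eliminates the diagonal terms with Laufer's identity for $f$, restricts the sum to pairs $(i,v)$ with both indices in $\mathcal{Z}$, and then concludes using the constancy of the Hironaka quotients $m_i(g)/m_i(f)$ on $\mathcal{Z}$ (point (iii) of Theorem~\ref{touten1}, hence ultimately Maugendre--Michel) together with an antisymmetrization. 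You instead group by the index $i$ and annihilate each term separately: Theorem~\ref{touten1}(ii) when $i\in\mathcal{A}_\pi$, and the edge/component argument plus Laufer's identity for $f$ when $i\notin\mathcal{A}_\pi$. This buys a real simplification: your route never uses point (iii), only point (ii), the inner rates formula and Proposition~\ref{laufer}. The ``main technical obstacle'' you flag --- that an edge not contained in $\mathcal{A}_\pi$ has interior disjoint from $\mathcal{A}_\pi$, so that its two endpoints lie in a single connected component of $\overline{\Gamma_\pi\setminus\mathcal{A}_\pi}$ --- is correct and easy to make precise, since $\mathcal{A}_\pi$ is a union of vertex-to-vertex simple paths (a path containing an interior point of an edge must contain the whole closed edge); note the paper needs the same fact, implicitly, when it restricts $A$ to $\mathcal{Z}\times\mathcal{Z}$.

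One caveat concerns the sub-case $i\in\mathcal{Z}\setminus\mathcal{A}_\pi$ with $f^*\cdot E_i\neq 0$, where you invoke Lemma~\ref{recurence'}. That lemma is stated for a good resolution of $(X,0)$ \emph{and} $(gf)^{-1}(0)$, so that the strict transforms are curvettes, whereas Theorem~\ref{francoise} only assumes a good resolution of $(X,0)$ (the remark following the statement insists on this extra generality). As written, this citation is therefore out of hypothesis in the general case. You are, however, in the same position as the paper: its proof silently discards the contribution $\bigl(m_v(f)q_{g,v}^f-m_v(g)\bigr)f^*\cdot E_v$ produced by the diagonal terms when Laufer's identity for $f$ is injected, which is exactly the identity $q_{g,v}^f=m_v(g)/m_v(f)$ at $f$-nodes of $\mathcal{Z}$ not lying in $\mathcal{A}_\pi$. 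So either one works with a resolution of the curve (where your appeal to Lemma~\ref{recurence'} is legitimate and your proof is complete), or one must supply that identity in the general setting, e.g.\ by extending Lemma~\ref{recurence'} beyond the curvette hypothesis or by showing such $f$-nodes belong to $\mathcal{A}_\pi$; this is a gap shared with, not created relative to, the paper's argument.
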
 \begin{rema} The theorem \ref{francoise} is more general than the original statement  \cite[Theorem 4.9]{Michel2008} because we do not assume that $\pi$ is a good resolution of the curve $(gf)^{-1}(0)$. Notice that without that hypothesis the number $ \chi'_v$ is not, in general, equal to the euler characteristic of the surface $ E'_v:=E_v-\left( \left(\bigcup_{i \neq v} E_i \right)\cup f^* \cup g^* \right)$ which appears in the original statement.   \end{rema}


\begin{proof}[Proof of Theorem \ref{francoise}]
Let us recall the inner rates formula \ref{laplacien} applied on  an irreducible component  $E_v$ of the exceptional divisor

\begin{eqnarray*}  \left( \sum_{i \in V(\Gamma_{\pi})} m_{i}(f)q_{g,i}^f E_i  \right) \cdot E_{v}=\mathrm{val}(v) +f^*\cdot E_v-\Pi_{\Phi}^* \cdot E_v+2g_v-2.
\end{eqnarray*}  \label{inner1}
\noindent Let us add  the term $g^* \cdot E_v$ to both sides of this  equality knowing that $$g^* \cdot E_v=-\left(\sum_{i \in V(\Gamma_{\pi})} m_i(g) E_i \cdot E_v \right) \  (proposition \ \ref{laufer}).$$ We get 

$$  \sum_{i \in V(\Gamma_{\pi})} \left( m_{i}(f)q_{g,i}^f -m_i(g) \right) E_i \cdot E_v  =\mathrm{val}(v) +f^* \cdot E_v+g^* \cdot E_v-\Pi_{\Phi}^* \cdot E_v+2g_v-2,
$$ 
by definition of  $\chi'_v$, the last equation becomes 

$$  \sum_{i \in V(\Gamma_{\pi})} \left( m_{i}(f)q_{g,i}^f -m_i(g) \right) E_i \cdot E_v  =-\Pi_{\Phi}^* \cdot E_v-\chi'_v. $$ 
Let $\mathcal{Z}$ be as in the statement of the theorem. Let us multiply both side of the last equality by $m_v(f)$ and take the sum over $v $ in $V(\mathcal{Z})$

$$\sum_{v \in V(\mathcal{Z})}  \sum_{i \in V(\Gamma_{\pi})} \left(m_v(f) m_{i}(f)q_{g,i}^f -m_v(f)m_i(g) \right) E_i \cdot E_v  =-\sum_{v \in V(\mathcal{Z})}  m_v(f)\Pi_{\Phi}^* \cdot E_v+m_v(f)\chi'_v.$$ It remains to prove that the left side $$A=\sum_{v \in V(\mathcal{Z})}  \sum_{i \in V(\Gamma_{\pi})} \left(m_v(f) m_{i}(f)q_{g,i}^f -m_v(f)m_i(g) \right) E_i \cdot E_v $$ is equal zero. Let us inject the equality $m_v(f) E_{v}^2= - \sum_{i \neq v} m_i(f) E_i \cdot E_v +f^* \cdot E_v$ (by proposition \ref{laufer}) in A : 
$$A= \sum_{v \in V(\mathcal{Z})}  \sum_{i \in V(\Gamma_{\pi})\\ i\neq v} \left(m_i(f)m_v(f) (q_{g,i}^f-q_{g,v}^f)+m_i(f)m_v(g)-m_i(g)m_v(f) \right) E_i \cdot E_v.$$

\noindent Let us notice that for every vertex $v$  of $\mathcal{A}_{\pi} \cap \mathcal{Z}$ we have:

$$ \sum_{i \in V({\mathcal{A}_{\pi}})\\ i\neq v} \left(m_i(f)m_v(f) (q_{g,i}^f-q_{g,v}^f)+m_i(f)m_v(g)-m_i(g)m_v(f) \right) E_i \cdot E_v =0,$$
because $q_{g,i}^f =\frac{m_i(g)}{m_i(f)}$ for every vertex $i$ of $\mathcal{A}_{\pi}$. On the other hand, if $v$ is a vertex of $\mathcal{Z} \backslash \mathcal{A}_{\pi}$, all it's neighbor vertices are in $\mathcal{Z}$, this imply that
  $$A= \sum_{v \in V(\mathcal{Z})}  \sum_{i \in V({\mathcal{Z}}),\\ i\neq v} \left(m_i(f)m_v(f) (q_{g,i}^f-q_{g,v}^f)+m_i(f)m_v(g)-m_i(g)m_v(f) \right) E_i \cdot E_v . $$
  By Theorem \ref{touten1}$(ii)$, $\frac{m_i(g)}{m_i(f)}=\frac{m_j(g)}{m_j(f)}$, for every $i,j \in V(\mathcal{Z})$. Therefore 
   $$A= \sum_{v \in V(\mathcal{Z})}  \sum_{i \in V({\mathcal{Z}})\\ i\neq v} m_i(f)m_v(f) (q_{g,i}^f-q_{g,v}^f) E_i \cdot E_v . $$
  By reordering the terms of $A$, we obtain:
  $$ A=\sum_{v,i \in V(\mathcal{Z}), v \neq i}(q_{g,i}^f-q_{g,v}^f)(m_i(f)m_v(g)-m_v(f)m_i(g)) E_i \cdot E_v.$$ Again, by Theorem \ref{touten1}$(ii)$, all the terms of $A$ are equal to zero therefore $A=0$ as desired. \end{proof}

\section{Polar exploration}\label{section9}

As an application of the inner rates formula (Theorem \ref{laplacien}),  we will perform \textit{polar exploration} using the inner rates expanding the ideas of \cite{BFP,BFNP,BFP2}. The main aim of this section is to prove Proposition \ref{propositionA} .
\begin{defi}\label{embeddedtopologicaltype}
Let $(X_1,0)$ and $(X_2,0)$ be two complex analytic surface germ with an isolated singularity. Let $(C_1,0)$ and $(C_2,0)$ be two germ of complex analytic curves embedded in $(X_1,0)$ and $(X_2,0)$ respectively. We say that the curves $(C_1,0)$ and $(C_2,0)$ have the \textbf{same  embedded topological type} if there exists a germ of homeomorphism $\psi: ( X_1,C_1 ) \longrightarrow (X_2, C_2 )$.
\end{defi} \noindent Neumann gave in \cite{neu81}  a complete invariant of the embedded topological type of a germ of  complex analytic curve embedded in a complex surface with an isolated singularity as follows. Let $(C,0)$ be a germ of analytic curve embedded in a germ of complex analytic surface $(X,0)$.  The embedded topological type  of the curve $(C,0)$ determines and is determined by the dual graph of the minimal good resolution $\pi$ of $(X,0)$  and $(C,0)$ decorated with arrows corresponding to the irreducible components of the strict transform of $C$ by $\pi$.\\

Let $(X,0)$ be a complex analytic surface germ with an isolated singularity and let $\Phi=(g,f) : (X,0) \longrightarrow (\mathbb{C}^2,0)$ be a finite morphism. Let $\pi :(X_{\pi},E) \longrightarrow (X,0)$ be a good resolution of $(X,0)$ and $(gf)^{-1}(0)$.  Let $E_{v_1},E_{v_2},\ldots,E_{v_n}$ be the irreducible components of $E$.   Let us recall the inner rates formulas applied on morphisms $(g,f)$ and $(f,g)$:

$$M_{\pi}  \cdot \underline{a_{g,\pi}^f}=\underline{K_{\pi}}+\underline{F_{\pi}}-\underline{P_{\pi}}, \ \ \  M_{\pi} \cdot  \underline{a_{f,\pi}^g}=\underline{K_{\pi}}+\underline{G_{\pi}}-\underline{P_{\pi}}$$
where $M_{\pi}=(E_{v_i} \cdot E_{v_j})_{i,j \in \{1,2,\ldots,n\}}$, $a_{g,\pi}^f:=(m_{v_1}{(f)}q_{g, v_1}^f,\ldots,m_{v_n}(f)q_{g, v_n}^f)$, $K_{\pi} :=( \mathrm{val}_{\Gamma_{\pi}} (v_1) +2g_{v_1}-2,\ldots,\mathrm{val}_{\Gamma_{\pi}} (v_n) +2g_{v_n}-2)$, $F_{\pi}=(f^* \cdot E_{v_1},\ldots,f^* \cdot E_{v_n} ) $, $G_{\pi}= (g^* \cdot E_{v_1}, \ldots , g^* \cdot E_{v_n})$,  and 
$P_{\pi}=(\Pi_{\Phi}^* \cdot E_{v_1},\ldots,\Pi_{\Phi}^* \cdot E_{v_n})$. What we call \textbf{polar exploration} consists of finding the $\mathcal{P}$-vector  $P_{\pi}=(\Pi_{\Phi}^* \cdot E_{v_1},\ldots,\Pi_{\Phi}^* \cdot E_{v_n}) $ knowing the vectors $M_{\pi}$, $K_{\pi}$, $F_{\pi}$ and $G_{\pi}$ in the case $\pi$ is the minimal good resolution of $(X,0)$ and $(gf)^{-1}(0)$. In other words, we wants to find all the possible $\mathcal{P}$-vector for a fixed  embedded topological type of the curve $(gf)^{-1}(0)$.

 Let us give  an example where for a given topological type the polar exploration gives three possible $\mathcal{P}$-vectors. We prove they are all realized by a morphism.  \begin{exam}\label{patricio}
 Consider again Example \ref{exemple1}.\begin{figure}[H]
\begin{tikzpicture}[node distance=4.5cm, very thick]
 \tikzstyle{titleVertex}      = [ shape=circle,node distance=4cm] \tikzstyle{inVertex}      = [ shape=circle,node distance=2.5cm]
\tikzstyle{Vertex}      = [fill, shape=circle, line cap=round,line join=round,>=triangle 45,scale=.4,font=\scriptsize]
  \tikzstyle{Edge}        = [black]
    \tikzstyle{arrowEdge}        = [black, ->]  
   \tikzstyle{arrowEdge'}        = [black, -<]    
     \tikzstyle{arrowEdge''}        = [red, ->]

 \node[Vertex]      (1)               {};

     \node[Vertex]      (2)  [right       of=1] {};
  \node[Vertex]      (3)  [ right of=2]  {};
  \node[Vertex]      (4)  [right  of=3]  {};
  \node[Vertex]      (5)  [below        of=4] {};
  \node[Vertex]      (6)  [below       of=5] {};
    \node[inVertex] (0)  [ left     of=1]  {$(1)$};    
      \node[inVertex] (7)  [ right     of=4]  {$(1)$};   

          \path 
(1) edge[Edge] (2)
(2) edge[Edge] (3)
(3) edge[Edge] (4)
(4) edge[Edge] (5)
(4) edge[arrowEdge']  (7)
(1) edge[arrowEdge]  (0)
(5) edge[Edge] (6);

\draw (1) node[below] {$-2$}
          (2) node[below] {$-4$}
          (3) node [below]{$-2$}
           (4) node [below left ]{$-1$}
            (5) node [right]{$$}
             (6) node [right]{$$}
              (1) node[above] {$(1,5)$}
          (2) node[above] {$(1,10)$}
          (3) node [above]{$(3,35)$}
           (4) node [above]{$(5,60)$}
            (5) node [right]{$(2,24)$}
             (6) node [right]{$(1,12)$}
                 (5) node [left]{$-3$}
             (6) node [left]{$-2$};

                                      
  \end{tikzpicture} 
  \caption{The numbers between parenthesis are the orders of vanishing $(m_v(g),m_v(f))$ of  the functions $g \circ \pi $ and $f \circ \pi$ along the irreducible components of $E$, these numbers can be determined from the dual graph using Proposition \ref{laufer}.}
   \end{figure}
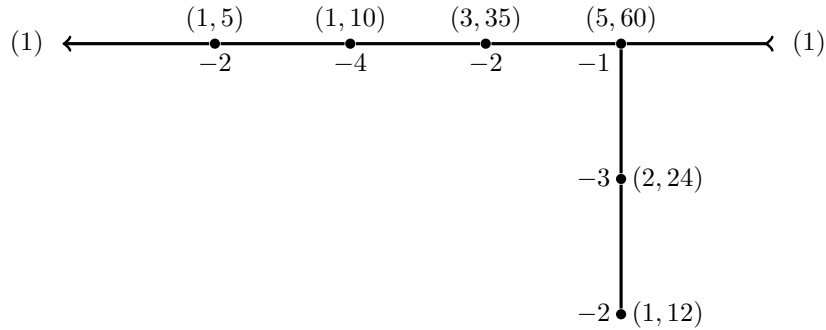

\noindent Now, we will focus on determining the $\mathcal{P}$-vector of $\Phi$ using only the dual graph $\Gamma_{\pi}$.  By applying \cite[Theorem 4.9]{Michel2008}(Theorem \ref{francoise})  successively  to each vertex   $v_1,v_2$ and $v_3$ of $\mathcal{A}_{\pi}$, we get $\Pi_{\Phi}^* \cdot E_{v_i}=0$ for $i=1,2,3$. Now we apply the theorem on the set $\{ v_4,v_5,v_6\}$ which is a connected component of $\overline{\Gamma_{\pi} \backslash \mathcal{A}_{\pi}}$ and we get the equation
$$ 60\Pi_{\Phi}^* \cdot E_{v_4} +24\Pi_{\Phi}^* \cdot E_{v_5} +12\Pi_{\Phi}^* \cdot E_{v_6} = 48.$$ 
 this gives  three possible $\mathcal{P}$-vectors $$( \Pi_{\Phi}^* \cdot E_{v_{1}},\Pi_{\Phi}^* \cdot E_{v_{2}},\Pi_{\Phi}^* \cdot E_{v_{3}},\Pi_{\Phi}^* \cdot E_{v_{4}},\Pi_{\Phi}^*\cdot E_{v_{5}},\Pi_{\Phi}^* \cdot E_{v_{6}}) \in \left \{
\begin{array}{rcl}
 (0,0,2,1,0,0),&& \\
 (0,0,4,0,0,0),&&\\
(0,0,0,2,0,0)&&
\end{array}\right\}$$

 We already know  from the computations made in Example \ref{exemple3.5} that the $\mathcal{P}$-vector  $(0,0,2,1,0,0)$ corresponds to the morphism $\Phi$. Now, a natural question is wether   the two vectors $(0,0,4,0,0,0),(0,0,0,2,0,0)$ can be realized by the $\mathcal{P}$-vectors of two other morphisms. The answer is yes, we will show it by using the following lemma:
\begin{lemm}[{\cite[lemma 6.6.1]{casas}}] \label{casas}
Let $\gamma$ be a germ of complex curve at the origin of  $\mathbb{C}^2$. The following   assertions are equivalent:
\begin{enumerate}
\item $\gamma$ has equation $$ h(x,y)=a_{0,n}y^n+a_{m,0}x^m+ \sum_{ni+mj>nm}a_{i,j}x^iy^j=0$$
with $a_{0,n} \neq 0$ and $a_{m,0} \neq 0$.
\item $\gamma$ has the same embedded topological type as the curve of equation
$$ y^n+x^m=0$$ 
\end{enumerate}
\end{lemm}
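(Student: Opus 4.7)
The plan is to invoke Neumann's theorem (stated just after Definition~\ref{embeddedtopologicaltype}), which identifies the embedded topological type of a plane curve germ with the combinatorial data of its minimal embedded resolution. It therefore suffices to show that both the general equation appearing in (i) and the model equation $y^n + x^m = 0$ lead to the same decorated dual graph under embedded resolution.

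For the direction (i) $\Rightarrow$ (ii), I would observe that the hypothesis on $h$ states precisely that the Newton polygon of $h$ consists of the single compact edge from $(m,0)$ to $(0,n)$, with every other monomial lying strictly above it. The initial form $a_{0,n}y^n + a_{m,0}x^m$ along this edge factors, after a rescaling by suitable roots of $-a_{m,0}/a_{0,n}$, as a product of $d := \gcd(m,n)$ smooth quasi-homogeneous pieces $y^{n/d} + \omega_k x^{m/d}$ with the $\omega_k$ distinct $d$-th roots of unity, so $h$ is Newton non-degenerate in the sense of Kouchnirenko. For such $h$, a single toric modification determined by the Newton polygon, followed by the Euclidean-algorithm chain of blowups applied to each of the $d$ resulting smooth pieces, yields an embedded resolution whose dual graph depends only on $(m,n)$. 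Applying this to both $h$ and $y^n + x^m$ gives the same decorated graph.

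For the converse (ii) $\Rightarrow$ (i), I would work from the Puiseux expansions of the branches of $\gamma$. The topological type of $y^n + x^m = 0$ consists of $d$ branches, each with a single characteristic Puiseux pair $(n/d, m/d)$ and tangent to a common monomial curve $y^{n/d} = \lambda x^{m/d}$. If $\gamma$ shares this type, each branch admits a Puiseux parametrization $y = c_i x^{m/n} + o(x^{m/n})$ whose leading coefficients $c_i$ are the $n$ distinct $n$-th roots of a common nonzero constant; substituting into any power series $h(x,y) = \sum a_{i,j} x^i y^j$ defining $\gamma$ and imposing vanishing on every branch forces $a_{i,j} = 0$ for all $(i,j)$ with $ni + mj < nm$, and $a_{m,0}, a_{0,n} \neq 0$, which is the form in (i).

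The main obstacle lies in the bookkeeping of the case $d > 1$: the $d$ branches of $y^n + x^m = 0$ are all tangent to the same monomial curve, and one must check that the higher-order terms in $h$ do not deform these branches in a way that alters their mutual intersection multiplicities, which would change the arrow decoration on the resolution graph. This is precisely the content of Newton non-degeneracy, verifiable directly on the two-monomial initial form $a_{0,n}y^n + a_{m,0}x^m$, and is the technical heart of the argument; once secured, the rest follows from standard toric resolution theory and the Puiseux algorithm.
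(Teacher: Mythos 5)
The paper offers no argument for Lemma~\ref{casas} at all: it is imported verbatim from \cite{casas} and used only through the implication (i) $\Rightarrow$ (ii) in Example~\ref{patricio}, so there is no internal proof to compare with and your proposal must stand on its own. Your sketch of (i) $\Rightarrow$ (ii) follows the standard route and is sound in outline: the hypothesis says the Newton polygon is the single compact edge from $(m,0)$ to $(0,n)$, a two-monomial face function $a_{0,n}y^n+a_{m,0}x^m$ is automatically non-degenerate, and the toric (Newton) resolution it dictates, together with the arrow decorations, depends only on $(m,n)$. One imprecision: the $d=\gcd(m,n)$ factors $y^{n/d}+\omega_k x^{m/d}$ are not smooth germs unless $n/d=1$ or $m/d=1$; what you actually need is that the $\omega_k$ are pairwise distinct, so that after the toric modification the strict transform meets the new exceptional component transversally at $d$ distinct smooth points, after which the remaining combinatorics (the Euclidean-algorithm chain being the regular subdivision itself, not a step performed afterwards on the branches) is determined by $(m,n)$ alone.

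The direction (ii) $\Rightarrow$ (i), however, has a genuine gap. You claim that topological equivalence with $y^n+x^m=0$ forces the branches of $\gamma$, \emph{in the given coordinates}, to have Puiseux expansions $y=c_ix^{m/n}+o(x^{m/n})$ with the $c_i$ the $n$ distinct $n$-th roots of one common constant. Neither part follows: the embedded topological type is defined through an arbitrary germ of homeomorphism (Definition~\ref{embeddedtopologicaltype}) and keeps no memory of the coordinate axes, and even in coordinates where every branch does have first exponent $m/n$, the topological data (multiplicities and pairwise intersection numbers) only force the quantities $c_i^{n/d}$ to be pairwise distinct, not to be the $d$-th roots of a single constant --- and that stronger property is precisely what excludes monomials on the open edge $ni+mj=nm$. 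Concretely, $(y-x^2)(y-2x^2)=y^2-3x^2y+2x^4$ defines a germ with the same embedded topological type as $y^2+x^4=0$ (two smooth branches with contact $2$), yet the monomial $x^2y$ satisfies $2\cdot 2+4\cdot 1=8=nm$, so this equation is not of the form in (i), and multiplying by a unit cannot remove that term since the equation has no monomials below the edge. Thus, read literally in fixed coordinates, the implication you are proving fails, and your argument for it cannot be completed without re-interpreting (ii) (e.g.\ allowing a suitable analytic change of coordinates, which is the sense in which the source must be read); since the paper only ever invokes (i) $\Rightarrow$ (ii), this does not affect Example~\ref{patricio}, but as a proof of the stated equivalence the proposal is incomplete.
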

By lemma \ref{casas}  the  dual graph of the minimal good resolution of the curves  $(x,y^5-x^{12})=0$ and $(x+y)(y^5-x^{12}+x^{10}y)=0$  is $\Gamma_{\pi}$. A direct computation shows that:
\begin{enumerate}

\item The polar curve ${\Pi_{\Phi_1}}$ of the morphism $\Phi_1=(x,y^5-x^{12})$ has equation $ y^4=0$, the $\mathcal{P}$-vector  is 
$$  (0,0,4,0,0,0).$$

\item The polar curve ${\Pi_{\Phi_2}}$ of the morphism $\Phi_2=(x+y,y^5-x^{12}+x^{10}y)$ has equation $ 5y^4+x^{10}+12x^{11}-10x^9y=0$, the $\mathcal{P}$-vector  is 
$$ (0,0,0,2,0,0)$$
\end{enumerate}

  \end{exam}  In Example \ref{patricio}, Theorem \ref{francoise} (\cite[Theorem 4.9]{Michel2008}) is sufficient to find all the realized $\mathcal{P}$-vectors. We will now prove Proposition \ref{propositionA} stated in the introduction which provides a family of  examples where the knowledge of the inner rates and their properties (Theorem \ref{touten1}) gives a better restriction on the possible  $\mathcal{P}$-vectors than Theorem \ref{francoise}. In fact  we will prove the following  more precise version of Proposition \ref{propositionA}.

  \begin{prop}\label{famille}
  Let $n \in \mathbb{N}_{\geq 2}$ and consider  the following graph with arrows  $\Gamma_n$:
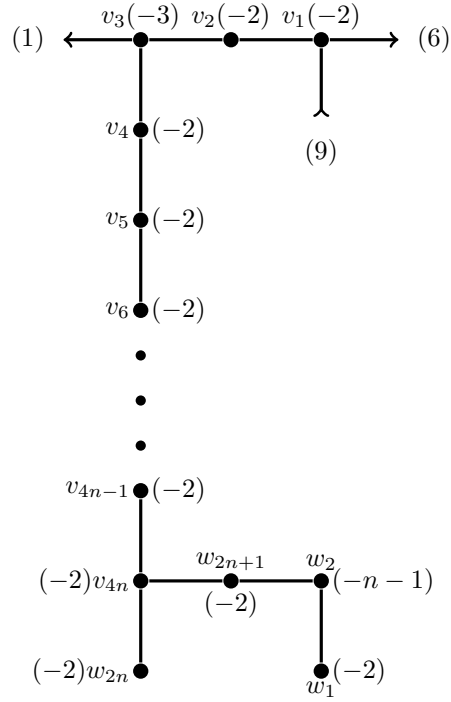
\begin{figure}[H]
\begin{tikzpicture}[node distance=2cm, very thick]
 \tikzstyle{titleVertex}      = [ shape=circle,node distance=4cm]
 \tikzstyle{pttVertex}      = [fill, shape=circle, line cap=round,line join=round,>=triangle 45,scale=.4,font=\scriptsize,node distance=1.5cm]

  \tikzstyle{inVertex}      = [ shape=circle,node distance=1.5cm]
\tikzstyle{Vertex}      = [fill, shape=circle, line cap=round,line join=round,>=triangle 45,scale=.6,font=\scriptsize]
  \tikzstyle{Edge}        = [black]
    \tikzstyle{arrowEdge}        = [black, ->]  
    \tikzstyle{arrowEdge'}        = [black, -<]

 \node[Vertex]      (1)               {};
   \node[inVertex] (a)  [right       of=1]  {$(6)$};
      \node[inVertex] (b)  [below       of=1]  {$(9)$};
           \node[Vertex]      (2)  [left       of=1] {};
  \node[Vertex]      (3)  [ left of=2]  {};
  \node[inVertex] (c)  [left    of=3]  {$(1)$};   
  \node[Vertex]      (4)  [below of=3]  {};
  \node[Vertex]      (5)  [below of=4] {};
  \node[Vertex]      (6)  [below of=5] {};

     \node[pttVertex]      (7)  [below of=6] {};
     \node[pttVertex]      (8)  [below of=7] {};   
        \node[pttVertex]      (8')  [below of=8] {};   
        
                \node[Vertex]      (9)  [below of=8] {};
          \node[Vertex]      (10)  [below of=9] {};        
         \node[Vertex]      (11)  [below of=10] {};
                     \node[Vertex]      (12)  [right of=10] {};   
                     \node[Vertex]      (13)  [right of=12] {};   
\node[Vertex]      (14)  [below of=13] {};

                  \path (1) edge[arrowEdge]  (a)
                  (1) edge[arrowEdge']  (b)
                  (3) edge[arrowEdge]  (c)
                                                (1) edge[Edge] (2)
      (2) edge[Edge] (3)
             (3) edge[Edge] (4)
               (4) edge[Edge] (5)
         (5) edge[Edge] (6)
                        (9) edge[Edge] (10)
               (10) edge[Edge] (11)
               (10) edge[Edge] (12)
               (12) edge[Edge] (13)                              
               (13) edge[Edge] (14);                
               \draw (1) node[above] {$v_1(-2)$}
          (2) node[above] {$v_2(-2)$}
          (3) node [above]{$v_3(-3)$}
  
                                          (4) node [left]{$v_4$}
            (5) node [left]{$v_5$}
             (6) node [left]{$v_6$}
              (9) node [left]{$v_{4n-1}$}
                               (10) node [left]{$(-2)v_{4n}$}
                               (11) node [left]{$(-2)w_{2n}$}
                               (12) node [above]{$w_{2n+1}$}
                               (13) node [above]{$w_{2}$}
                               (14) node [below]{$w_{1}$}
                               (4) node [right]{$(-2)$} 
                               (5) node [right]{$(-2)$}
                               (6) node [right]{$(-2)$}
                               (9) node [right]{$(-2)$}

 (12) node [below]{$(-2)$}
  (13) node [right]{$(-n-1)$}
 (14) node [right]{$(-2)$};
                                                                 \end{tikzpicture}
                                                                 \caption{The genus is $0$ for every vertex}
                                                                 
                                                                  \end{figure}                                                              
\begin{enumerate}
\item There exists a  complex surface singularity  $(X_n,0)$ and a finite morphism $\Phi_n=(g_n,f_n) :( X_n,0) \longrightarrow (\mathbb{C}^2,0)$ such that   $\Gamma_n$ is the dual graph of the minimal good resolution of $(X_n,0)$ and $(g_nf_n)^{-1}(0)$;

\item The $\mathcal{P}$-vector of any such morphism $\Phi_n$ belongs to a set of $n+5$ elements. More precisely:\\ $\Pi_{\Phi_n}^* \cdot E_{v_1}=14$  and $\Pi_{\Phi_n}^* \cdot E_{v_k}=0$ for every $k = 2,3,\ldots,4n, $ \\ $( \Pi_{\Phi_n}^* \cdot E_{w_{2n}},\Pi_{\Phi_n}^* \cdot E_{w_{2n+1}},\Pi_{\Phi_n}^* \cdot E_{w_{2}},\Pi_{\Phi_n}^* \cdot E_{w_{1}}) \in \left \{
\begin{array}{rcl}
 (1,0,1,0),&& \\

 (1,0,0,2),&&\\

(0,1,0,1),&&\\

(0,0,k,k+2),&\\ 0 \leq k \leq n+1&
\end{array}
\right\}$
\end{enumerate}

\end{prop}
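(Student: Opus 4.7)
The plan splits into two independent parts, addressing the realization statement (i) and the enumeration statement (ii) separately.

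\textbf{Part (i): Realization.} First, I would verify that the intersection matrix encoded by $\Gamma_n$ is negative definite — a direct Sylvester-type check, immediate given that almost all self-intersections are $-2$. Grauert's contractibility theorem then produces a normal surface germ $(X_n,0)$ whose minimal good resolution has $\Gamma_n$ as dual graph, and by Neumann's theorem the arrow data determines the embedded topological type of a two-component curve $\Delta_n \subset (X_n,0)$. Standard existence results for principal divisors on normal surface germs (passing to a multiple of the arrow weights if needed to upgrade from Weil to Cartier) then realize the two components as $f_n^{-1}(0)$ and $g_n^{-1}(0)$, and finiteness of $\Phi_n=(g_n,f_n)$ is the set-theoretic statement $f_n^{-1}(0)\cap g_n^{-1}(0)=\{0\}$, immediate from the disjoint placement of the $f$- and $g$-arrows on $\Gamma_n$.

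\textbf{Part (ii): the $\mathcal{P}$-vector.} Using Proposition \ref{laufer} I would first compute all multiplicities $m_v(f_n), m_v(g_n)$ from the arrow data. Since $v_1$ is the unique $f$-node and $v_1, v_3$ are the only $g$-nodes, Theorem \ref{touten1}(i) applied to both $(g_n,f_n)$ and $(f_n,g_n)$, combined with Theorem \ref{touten1}(ii), forces $\mathcal{A}_\pi$ to coincide with the segment $v_1-v_2-v_3$. Applying Theorem \ref{francoise} to the two singletons $\{v_1\},\{v_2\}$ lying in the interior of $\mathcal{A}_\pi$ immediately yields $\Pi^*_{\Phi_n}\!\cdot E_{v_1}=-\chi'_{v_1}=14$ and $\Pi^*_{\Phi_n}\!\cdot E_{v_2}=0$. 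To pin down $\Pi^*_{\Phi_n}\!\cdot E_{v_k}=0$ for every $3\le k\le 4n$, I would combine the inner-rates formula (Theorem \ref{laplacien}), which expresses the inner rates vector linearly in the $\mathcal{P}$-vector via $M_\pi^{-1}$, with the strict monotonicity of the inner-rates function along the geodesic from $v_1$ to $v_{4n}$ given by Theorem \ref{touten1}(i): each positive polar entry along the $-2$-chain $v_4,\ldots,v_{4n}$ decreases the discrete slope of $\mathcal{I}^{f_n}_{g_n}$, and accumulating such entries along the long chain is incompatible with $\mathcal{I}^{f_n}_{g_n}$ remaining strictly increasing and then continuing to increase into the two branches emanating from $v_{4n}$.

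\textbf{Main obstacle: the enumeration.} After this reduction, Theorem \ref{francoise} applied to the connected component $\overline{\Gamma_n\setminus\mathcal{A}_\pi}$ yields a single linear Diophantine equation on the four branch intersection numbers $\Pi^*_{\Phi_n}\!\cdot E_{w_{2n}}$, $\Pi^*_{\Phi_n}\!\cdot E_{w_{2n+1}}$, $\Pi^*_{\Phi_n}\!\cdot E_{w_2}$, $\Pi^*_{\Phi_n}\!\cdot E_{w_1}$. This equation admits exponentially many non-negative integer solutions (see Remark \ref{remaramanujan}), and the heart of the argument is to show that the strict monotonicity of the inner rates along the two geodesics from $v_{4n}$ toward $w_{2n}$ and toward $w_1$ cuts them down to exactly $n+5$. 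I would carry out a case analysis on which of the four branch vertices support polar components: the three sporadic vectors $(1,0,1,0)$, $(1,0,0,2)$, $(0,1,0,1)$ correspond to the minimal distributions compatible with monotonicity on both branches, while the family $(0,0,k,k+2)$ with $0\le k\le n+1$ captures the solutions concentrating the polar on the $w_2-w_1$ leg, the upper bound $k\le n+1$ being forced by the self-intersection $-n-1$ at $w_2$ (any larger $k$ would reverse the discrete slope of $\mathcal{I}^{f_n}_{g_n}$ at $w_2$ and violate monotonicity). Summing the three sporadic vectors with the $n+2$ members of the family then yields exactly the $n+5$ admissible $\mathcal{P}$-vectors.
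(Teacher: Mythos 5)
Your part (ii) reduction is essentially the paper's argument (monotonicity of the inner rates along the unique path from the $f$-node, fed into the inner-rates formula vertex by vertex, kills $\Pi^*_{\Phi_n}\cdot E_{v_k}$ along the chain; your use of Theorem \ref{francoise} on the singletons $\{v_1\},\{v_2\}$ gives the same values $14$ and $0$ as the paper's direct application of Theorem \ref{laplacien}). But your description of the final enumeration is confused: once the chain entries are known to vanish, the single equation produced by Theorem \ref{francoise} on $\overline{\Gamma_n\setminus\mathcal{A}_\pi}$ is $4n\,p_{4n}+2n\,p'_{2n}+(2n+1)\,p'_{2n+1}+2p'_2+p'_1=2n+2$, and its non-negative integer solutions are \emph{exactly} the $n+5$ listed vectors by inspection (the coefficients $4n$, $2n$, $2n+1$ nearly exhaust the right-hand side, and the bound $k\le n+1$ is just non-negativity of $p'_1$). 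No further monotonicity argument on the $w$-legs, and no appeal to the self-intersection $-n-1$ at $w_2$ "reversing a slope", is needed or justified; the exponential count of Remark \ref{remaramanujan} refers to the equation one gets \emph{without} first killing the chain entries, not to this four-variable equation. This is a repairable misreading, not a fatal error.

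The genuine gap is in part (i). Producing $(X_n,0)$ by Grauert and a curve germ with the prescribed embedded topological type is fine, but your step "standard existence results for principal divisors on normal surface germs (passing to a multiple of the arrow weights if needed to upgrade from Weil to Cartier)" does not work. First, on a general normal surface germ the local divisor class group can contain non-torsion classes (e.g.\ cones over curves of positive genus), so no multiple of your chosen curve need be principal; there is no "standard" existence result giving holomorphic $f_n,g_n$ cutting out a prescribed curve configuration on an arbitrary $(X_n,0)$. Second, even when some multiple is principal, replacing the curve by a multiple changes the arrow weights and all the multiplicities $m_v(f_n),m_v(g_n)$, so the decorated graph of the minimal good resolution of $(X_n,0)$ and $(g_nf_n)^{-1}(0)$ would no longer be $\Gamma_n$, and the numerical input of part (ii) (the weights $6$, $9$, $1$ and the resulting Hironaka quotients $\tfrac{5}{7},\tfrac{4}{5},1$) would be destroyed. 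The paper's proof supplies precisely the missing idea: it computes the minimal cycle $Z_{min}(\Gamma_{\pi_n})$ by Laufer's algorithm, checks $\chi(Z_{min})=1$ so that $(X_n,0)$ is \emph{rational}, and then invokes Artin's theorem (Theorem \ref{artin}, $Z_{top}=Z_{an}$) to realize the required cycles by actual holomorphic functions; an auxiliary blowup of fifteen points on $E_{v_1}$ is used to force $f_n^*\cup g_n^*$ to be a disjoint union of curvettes with exactly the prescribed intersection numbers, so that $\pi_n$ really is the minimal good resolution of $(X_n,0)$ and $(g_nf_n)^{-1}(0)$. Without the rationality argument (or some substitute guaranteeing principality with the exact prescribed data), your part (i) does not go through.
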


   In order to prove Proposition \ref{famille}  we will need some tools we are going to present now.
 All the definitions and results  of this subsection can be found e.g in \cite{nem}.
 
  We call \textbf{cycle} on $\Gamma_{\pi}$ every effective divisor of $X_{\pi}$ which is supported on $E$. Let $h :(X,0) \longrightarrow (\mathbb{C},0) $ be a holomorphic function. Denote by $(h_\pi)$ the compact part of the total transform oh $h$ by $\pi$ i.e,$$ (h_\pi)=\sum_{i \in V(\Gamma_{\pi})} m_i(h)E_i.$$ The cycle $(h_\pi)$ is called analytic cycle and we denote by $Z_{an}(\Gamma_{\pi} )$ the set of \textbf{analytic cycles} on $\Gamma_{\pi}$. A  \textbf{topological cycle} $D$  is an effective  cycle such that $D \cdot E_v \leq 0 $ for every irreducible component $E_v$ of $E$, we denote by $Z_{top}(\Gamma_{\pi})$ the set of such cycles.

There is a natural ordering of the cycles: $D_1 = \sum n_i E_{v_i} \leq D_2 = \sum n'_i E_{v_i}$ if and only if $n_i \leq n_i'$ for all $i$. By \cite[Lemma 2.6]{nem} the set $Z_{top}(\Gamma_{\pi})$ contains a minimal element denoted $Z_{min}(\Gamma_{\pi})$ called the \textbf{minimal cycle}.
\begin{defi}[{\cite[Appendix 2]{nem}}]
The \textbf{Euler characteristic of an effective cycle} $D$ is defined by  
$$ \chi(D) =-\frac{1}{2}D\cdot(D+K_{\pi})$$
where $K_{X_\pi}$ is the canonical divisor of $X_{\pi}$.
\end{defi}

\begin{defi}[{\cite{nem}[Theorem 3.8]}, \cite{artin}]
The complex surface germ $(X,0)$ is said to be \textbf{rational} if $\chi(Z_{min}(\Gamma_{\pi}))=1$ where $\pi$ is a good resolution of $(X,0)$.
\end{defi}
\begin{thm}[{\cite[Theorem 3.14]{nem},  \cite{artin} }] \label{artin} Let $(X,0)$ be a complex surface germ with an isolated singularity. If $(X,0)$ is rational then  $$  Z_{top}(\Gamma_{\pi} ) = Z_{an}(\Gamma_{\pi} ),$$ for every good resolution $\pi$ of $(X,0)$.
 \end{thm}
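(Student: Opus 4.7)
The plan is to prove (i) by combining Mumford's negative-definiteness criterion with Grauert's contractibility theorem and the Artin--Laufer theory of rational singularities, and to prove (ii) by applying the inner rates formula (Theorem \ref{laplacien}) together with Theorem \ref{touten1} and Michel's theorem (Theorem \ref{francoise}).

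For (i), one first checks that the intersection matrix $M_{\Gamma_n}$ associated with the self-intersections listed on $\Gamma_n$ is negative definite; Grauert's contractibility theorem then produces a normal complex surface germ $(X_n,0)$ whose minimal good resolution $\pi_n : (X_{\pi_n},E) \to (X_n,0)$ has dual graph $\Gamma_n$. A direct computation of the minimal cycle yields $\chi(Z_{\min}(\Gamma_n)) = 1$, so $(X_n,0)$ is rational in the sense of Artin, and Theorem \ref{artin} then guarantees that every topological cycle on $\Gamma_n$ is the compact part of the total transform of some holomorphic function. In particular, there exist holomorphic functions $f_n, g_n$ on $(X_n,0)$ whose arrows on $\Gamma_n$ match the prescribed weights (one $f$-going-in arrow of weight $9$ at $v_1$, and two $g$-going-out arrows of weights $6$ at $v_1$ and $1$ at $v_3$); since their strict transforms share no branches and are disjoint curvettes of $E$, the induced morphism $\Phi_n = (g_n, f_n)$ is finite and $\Gamma_n$ is the minimal good resolution dual graph of $(X_n,0)$ and $(g_nf_n)^{-1}(0)$.

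For (ii), Proposition \ref{laufer} and the arrow data determine the multiplicities $m_v(f_n), m_v(g_n)$ for all $v \in V(\Gamma_n)$. Since $v_1$ is simultaneously an $f$-node and a $g$-node, Lemma \ref{recurence'} forces $q^{f_n}_{g_n, v_1} = m_{v_1}(g_n)/m_{v_1}(f_n)$; as $\Gamma_n$ is a tree, the unique path $v_1 v_2 v_3$ from $v_1$ to the other $g$-node $v_3$ must lie in $\mathcal{A}_{\pi_n}$ by Theorem \ref{touten1}(i), and Theorem \ref{touten1}(ii) then pins the inner rates on $\{v_1, v_2, v_3\}$ to their Hironaka quotients. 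The complement $\mathcal{Z} := V(\Gamma_n) \setminus \{v_1, v_2, v_3\}$ forms a single connected component of $\overline{\Gamma_n \setminus \mathcal{A}_{\pi_n}}$, so by Theorem \ref{touten1}(iii) the Hironaka quotient is constant on $\mathcal{Z}\cup\{v_3\}$, which yields strong linear relations among the $m_v(g_n)$ and $m_v(f_n)$ in $\mathcal{Z}$. Applying the inner rates formula vertex-by-vertex and inverting the negative-definite restriction of $M_{\pi_n}$ to $\mathcal{Z}$ expresses the rates $(q^{f_n}_{g_n,v})_{v\in\mathcal{Z}}$ as affine functions of the unknowns $(\Pi^*_{\Phi_n} \cdot E_v)_{v\in\mathcal{Z}}$. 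Enumerating admissible polar vectors then amounts to listing non-negative integer tuples satisfying (a) strict monotonicity of the inner rates along paths from $v_1$ (Theorem \ref{touten1}(i)), (b) integrality of $m_v(f_n) q^{f_n}_{g_n,v}$ (Proposition \ref{inner-rate}), and (c) the aggregate relation of Theorem \ref{francoise} applied to $\mathcal{Z}$.

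The main obstacle is the combinatorial case analysis, which is where the weight $-(n+1)$ at $w_2$ enters: it is precisely this self-intersection that accommodates an $n$-parameter family of admissible tuples. The three sporadic tuples $(1,0,1,0)$, $(1,0,0,2)$ and $(0,1,0,1)$ correspond to the polar curve meeting one of the branch leaves $w_{2n}$ or $w_{2n+1}$, while the family $(0,0,k,k+2)$ with $0 \le k \le n+1$ arises from polar branches concentrated on $w_1$ and $w_2$, the upper bound $k \le n+1$ being dictated by the self-intersection $-(n+1)$ at $w_2$ through the monotonicity condition. The inner rates formula at $v_1$ combined with Lemma \ref{recurence'} and the forced vanishing of $\Pi^*_{\Phi_n} \cdot E_{v_j}$ for $j = 2,\ldots,4n$ (a consequence of the constancy of Hironaka quotients on $\mathcal{Z}\cup\{v_3\}$ combined with strict monotonicity from $v_1$) yields the contribution $\Pi^*_{\Phi_n}\cdot E_{v_1} = 14$, giving exactly $3 + (n+2) = n+5$ admissible $\mathcal{P}$-vectors.
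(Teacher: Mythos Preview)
Your proposal does not address the stated theorem at all. The statement you were asked to prove is Theorem~\ref{artin}: Artin's classical result that for a rational surface singularity $(X,0)$ one has $Z_{top}(\Gamma_\pi)=Z_{an}(\Gamma_\pi)$ for every good resolution $\pi$. The paper does not prove this; it is quoted from the literature (\cite[Theorem~3.14]{nem}, \cite{artin}) and used as a black box. What you have written is instead a sketch of the proof of Proposition~\ref{famille}, which \emph{invokes} Theorem~\ref{artin} as one of its ingredients. Nothing in your text establishes the equality $Z_{top}=Z_{an}$; you are simply applying it.

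If your intention was indeed to prove Proposition~\ref{famille}, then your outline is broadly aligned with the paper's argument (Grauert to realize the graph, Laufer's algorithm and $\chi(Z_{\min})=1$ to get rationality, Theorem~\ref{artin} to produce $f_n,g_n$, then the inner rates formula plus Theorem~\ref{touten1} to constrain the $\mathcal{P}$-vector). But several points are imprecise or off. First, the paper does not simply assert existence of $f_n,g_n$ with the right arrows: it blows up fifteen auxiliary smooth points of $E_{v_1}$, realizes carefully chosen cycles $D'_{n,1},D'_{n,2}$ on the blown-up graph via Theorem~\ref{artin}, and then blows back down to obtain curvette strict transforms; your sketch skips this and so does not justify why the strict transforms are disjoint curvettes. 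Second, your claim that the vanishing of $p_j$ for $j=2,\dots,4n$ follows from ``constancy of Hironaka quotients on $\mathcal{Z}\cup\{v_3\}$ combined with strict monotonicity'' is not the paper's argument and is not correct as stated: constancy of Hironaka quotients on $\mathcal{Z}$ says nothing directly about $\Pi^*\cdot E_{v_j}$. The paper instead runs an explicit induction using the inner rates formula at each $v_k$ together with the strict inequality $q_{v_{k+1}}>q_{v_k}$ to force $p_k<1$, hence $p_k=0$. Third, your explanation that the bound $k\le n+1$ is ``dictated by the self-intersection $-(n+1)$ at $w_2$ through the monotonicity condition'' is not how the paper obtains it: the bound comes from solving the linear Diophantine relation produced by Theorem~\ref{francoise} on the full vertex set once the $p_{v_j}$ have been shown to vanish.
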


Now, we are ready to prove Proposition \ref{famille}.

\begin{proof}[Proof of Proposition \ref{famille}]

Let us first prove the existence of the surfaces $(X_n,0)$. By a classical result of  Grauert  (\cite{Greu})   every weighted graph  without loops and with negative definite intersection matrix can be realized as the dual graph $\Gamma_{\pi}$ associated with the minimal good resolution of some complex analytic surface germ $(X, 0)$ (See e.g \cite[Section 2]{NP2007} for details). The intersection matrix of the dual graph $\Gamma_{n}$ is negative definite, therefore, there exists a  germ complex surface singularity $(X_n,0)$ such that the dual graph of its minimal good resolution $\pi_n:(X_{\pi_n},E_n) \longrightarrow (X_n,0)$ is $\Gamma_{n}$.\\

Now, we  prove the existence of the morphisms $\Phi_n=(g_n,f_n).$ The minimal cycle  of $\Gamma_{\pi_n}$  denoted  $Z_{min}(\Gamma_{\pi_n})$  is obtained by using  \textit{Laufer's algorithm} \cite{laufer1972} (see e.g,  \cite[2.10]{nem} for more details):
                         $$ Z_{min}(\Gamma_{\pi_n})= E_{v_1}+E_{v_2}+E_{v_3}+2(E_{v_4}+E_{v_5}+\ldots+E_{v_{4n-1}}+E_{v_{4n}}+E_{w_{2n+1}})+E_{w_2}+E_{w_1}+E_{w_{2n}}.$$
 A direct computation using the adjunction formula \ref{adj} shows that $\chi(Z_{min}(\Gamma_{\pi_n}))=1$. 
 By using Lemma \ref{laufer} we now compute from the dual graph $\Gamma_{\pi_n}$ the orders of vanishing of the functions $f_n \circ \pi_n $ and $g_n \circ \pi_n $ along the irreducible components of $E_n$:
 
\begin{figure}[H]
  \begin{tikzpicture}[node distance=2cm, very thick]

 \tikzstyle{titleVertex}      = [ shape=circle,node distance=4cm]
 \tikzstyle{pttVertex}      = [fill, shape=circle, line cap=round,line join=round,>=triangle 45,scale=.4,font=\scriptsize,node distance=1.5cm]

  \tikzstyle{inVertex}      = [ shape=circle,node distance=1.5cm]
\tikzstyle{Vertex}      = [fill, shape=circle, line cap=round,line join=round,>=triangle 45,scale=.6,font=\scriptsize]
  \tikzstyle{Edge}        = [black]
    \tikzstyle{arrowEdge}        = [black, ->]  
        \tikzstyle{arrowEdge'}        = [black, -<]

 \node[Vertex]      (1)               {};
   \node[inVertex] (a)  [right       of=1]  {$(6)$};
      \node[inVertex] (b)  [below       of=1]  {$(9)$};
           \node[Vertex]      (2)  [left       of=1] {};
  \node[Vertex]      (3)  [ left of=2]  {};
  \node[inVertex] (c)  [left   of=3]  {$(1)$};   
  \node[Vertex]      (4)  [below of=3]  {};
  \node[Vertex]      (5)  [below of=4] {};
  \node[Vertex]      (6)  [below of=5] {};

     \node[pttVertex]      (7)  [below of=6] {};
     \node[pttVertex]      (8)  [below of=7] {};   
        \node[pttVertex]      (8')  [below of=8] {};   
        
                \node[Vertex]      (9)  [below of=8] {};
          \node[Vertex]      (10)  [below of=9] {};        
         \node[Vertex]      (11)  [below of=10] {};
                     \node[Vertex]      (12)  [right of=10] {};   
                     \node[Vertex]      (13)  [right of=12] {};   
\node[Vertex]      (14)  [below of=13] {};

                  \path (1) edge[arrowEdge]  (a)
                  (1) edge[arrowEdge']  (b)
                  (3) edge[arrowEdge]  (c)
                                                (1) edge[Edge] (2)
      (2) edge[Edge] (3)
             (3) edge[Edge] (4)
               (4) edge[Edge] (5)
       (5) edge[Edge] (6)
                      (9) edge[Edge] (10)
               (10) edge[Edge] (11)
               (10) edge[Edge] (12)
               (12) edge[Edge] (13)                              
               (13) edge[Edge] (14);                
               \draw (1) node[above] {$v_1(5,7)$}
          (2) node[above] {$v_2(4,5)$}
          (3) node [above]{$v_3(3,3)$}
  
                                          (4) node [left]{$v_4$}
            (5) node [left]{$v_5$}
             (6) node [left]{$v_6$}
              (9) node [left]{$v_{4n-1}$}
                               (10) node [left]{$(4n,4n)v_{4n}$}
                               (11) node [left]{$(2n,2n)w_{2n}$}
                               (12) node [above]{$w_{2n+1}$}
                               (13) node [above]{$w_{2}$}
                               (14) node [below]{$w_{1}$}
                               (4) node [right]{$(4,4)$} 
                               (5) node [right]{$(5,5)$}
                               (6) node [right]{$(6,6)$}
                               (9) node [right]{$(4n-1,4n-1)$}
                               (12) node [below]{$(2n+1,2n+1)$}
                               (13) node [right]{$(2,2)$}
                               (14) node [right]{$(1,1)$};
                                                                 \end{tikzpicture} \caption{The numbers between parenthesis are the orders of vanishing $(m_v(g_n),m_v(f_n))$ of  the functions $g_n \circ \pi_n $ and $f_n \circ \pi_n$ along the irreducible components of $E_n$, these numbers can be determined from the dual graph using Proposition \ref{laufer}.}
                                                                 \end{figure}
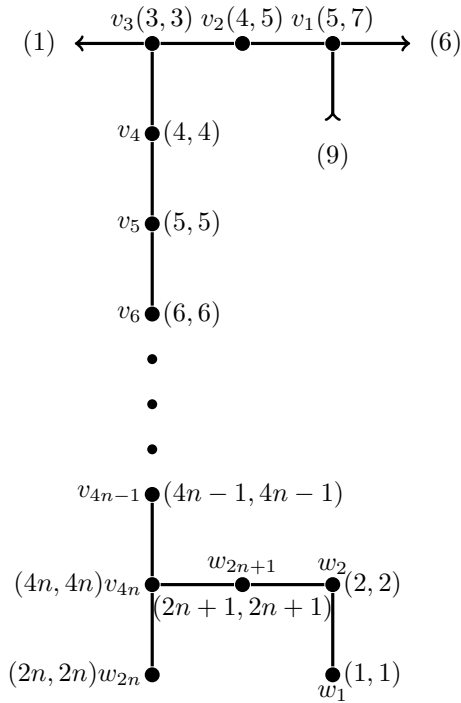
                                                                 
   We are going to construct two holomorphic function germs $g_n,f_n:(X_n,0) \longrightarrow (\mathbb{C},0)$  whose  analytic cycles are respectively:
 $$D_{n,1}= 7E_{v_1}+5E_{v_2}+\sum_{k=3}^{4n} kE_{v_k} +2nE_{w_{2n}}+(2n+1)E_{w_{2n+1}} +2E_{w_2}+E_{w_1}  $$
$$D_{n,2}= 5E_{v_1}+4E_{v_2}+\sum_{k=3}^{4n} kE_{v_k} +2nE_{w_{2n}}+(2n+1)E_{w_{2n+1}} +2E_{w_2}+E_{w_1}.$$ 
 and such that $f^*_n \cup g^*_n$ is a disjoint union of curvettes of $E_n$.

Let  $a_1,a_2,\ldots a_{15}$ be distinct  smooth points of $E_{v_1}$. Let $e_n$ be the composition of the blowups of the   points blowup $a_i$ and let $E_{v'_i}= e_n^{-1}(a_i)$  for $i=1,\ldots 15$. Since $\chi(Z_{min}(\Gamma_{\pi_n}))=1$, by Theorem \ref{artin}, there exists  functions $f_n$ and $g_n$ whose analytic cycles on $\Gamma_{e_n \circ \pi_n}$ are $$D'_{n,1}=D_{n,1}+ 7(E_{v'_1}+\ldots E_{v'_{6}})+8(E_{v'_7}+\ldots+ E_{v'_{15} } )$$
$$D'_{n,2}=D_{n,1}+ 8(E_{v'_1}+\ldots E_{v'_{7}})+7(E_{v'_1}+\ldots E_{v'_{15}}).$$
Using Lemma \ref{laufer} we can check that $f_n^* \cup g_n^*$ is a disjoint union of curvettes of the exceptional divisor of $e_n \circ \pi_n$ such that $f_n^*\cdot E_{v'_i}=0$ and $g_n^*\cdot E_{v'_i}=1$  for $i=1,\ldots, 6$, and $f_n^*\cdot E_{v'_i}=1$ and $g_n^*\cdot E_{v'_i}=0$  for $i=7,\ldots, 15$.  Therefore blowing down the fifteen components $E_{v'_i}$ we obtain that the union of strict transforms of $f_n$ and $g_n$ by $\pi_n$ is a disjoint union of curvettes of $E_n$.


Let us now prove $(ii)$.  Let $\Phi_n$ be a morphism as in $(i)$. Let compute its possible $\mathcal{P}$-vectors. Until the end of the proof we will use the notations $p_i:=\Pi_{\Phi_n}^* \cdot E_{v_i}$ and $ p'_i:=\Pi_{\Phi_n}^* \cdot E_{w_i}$.\\ By Theorem \ref{touten1}$(ii)$, we  know  that $q_{g_n,v_{1} }^{f_n}=\frac{5}{7}$, $q_{g_n, v_{2} }^{f_n}=\frac{4}{5}$ and $q_{g_n, v_{3} }^{f_n}=1$ . Let us apply Theorem \ref{laplacien} on the vertices $v_1$ and $v_2$, we then obtain $p_1=14 $ and $p_2=0$. Apply again Theorem \ref{laplacien} on the vertex $v_3$, we obtain $ p_3=5-4q_{g_n, v_4}^{f_n}$. By Theorem \ref{touten1}$(i)$ we have $q_{g_n, v_4}^{f_n}>q_{g_n,v_3 }^{f_n}=1$, then $p_3=5-4q_{g_n, v_4}^{f_n}<1$  and it  follows $p_3=0,q_{g_n ,v_4}^{f_n}=\frac{5}{4}$. \\ We now prove by induction that for every $ k \in \{ 3,4,\ldots,4n\}$ we have $q_{g_n, v_{k} }^{f_n} =\frac{2k-3}{k}$ and  $ p_{k-1}=0 $  . It is true for $k=3$. Let us suppose it is true for  a given $k < 4n$. By applying the inner rates formula on the vertex $E_k$ we get $$ p_k= 2k-1-(k+1)q_{g_n ,v_{k+1} }^{f_n}.$$ 
By Theorem \ref{touten1}$(i)$ we know that $q_{g_n ,v_{k+1} }^{f_n}>q_{g_n ,v_{k} }^{f_n}=\frac{2k-3}{k}$. Then
$$p_k=2k-1-(k+1)q_{g_n, v_{k+1} }^{f_n}< 2k-1-\frac{2k-3}{k}(k+1) =\frac{3}{k}<1.$$
Therefore $p_k=0$ and it follows that $q_{g_n, v_{k+1} }^{f_n}= \frac{2k-1}{k+1}$ as desired.

 Let us now  apply  Theorem \ref{francoise} on the vertices $\{v_1,\ldots,v_{4n},w_{2n},w_{2n+1},{w_2},w_{1}\}$, we get the following equality:
$$ 2n+2=4np_{4n}+2np'_{2n}+(2n+1)p'_{2n+1}+2p'_2+p'_1.$$
Solving this last equation  ends the proof.
                                 
                                 \end{proof}
                                 \begin{rema}\label{remaramanujan}
                                 In this last proposition if we did not make use of the inner rates and only restricted ourselves to Theorem  \ref{francoise}  we would have ended  with the following equality
                                 $$3p_3+4p_4+5p_5+\ldots+4np_{4n}+2np'_{2n}+(2n+1)p'_{2n+1}+2p'_2+p'_1=2n+2 $$
                                 where  $p'_i:=\Pi_{\Phi_n}^* \cdot E_{w_i}$ and $p_i:=\Pi_{\Phi_n}^* \cdot E_{v_i}$.\\The cases $p'_{2n+1}=1$ and $p'_{2n}=1$     were already treated  so we can suppose that            $p'_{2n+1}=p'_{2n}=0$ \\
                                 By renaming $x_1=p'_1,x_2=p'_2,x_3=p_3,x_4=p_4$, etc.,        we end up with the equation 
                                 $$ x_1+2x_2+3x_3+\ldots+(2n+2)x_{2n+2}=2n+2, \ \ \ x_i \in \mathbb{N}, $$
                              let $\Pi(n)$ be the cardinal of the set of  solutions of this equation. The sequence $\Pi(n)$ is equivalent to  $$ \Pi(n) \sim \frac{1}{(8n+8) \sqrt{3}} \mathrm{exp} \left(  \pi \sqrt{\frac{4n+4}{3} } \right),$$ as was proved by  Hardy and Ramanujan  (see e.g \cite{hardy}). On the other hand,  by using the inner rates in the proof of Proposition \ref{famille}, we end up with only $n+5$ possible cases. 
                              
                                                     \end{rema}

\section{Geometric interpretation of the inner rates}\label{section10}

Let $(X,0) \subset (\mathbb{C}^n,0)$ be a complex surface germ with an isolated singularity at the origin of $\mathbb{C}^n$. The aim of this section is to give a geometrical interpretation of the inner rates as metric invariants of the Milnor fibers of a non constant holomorphic function $f$. First, we will need to define the notion of generic linear form and generic polar curve with respect to a resolution and a function.
\subsection{Generic linear form and generic polar curve}

 All the results of this subsection are an adaption of  of \cite[Section 3]{BNP} and  \cite[Subsection 2.2]{BFP}.

Let $f:(X,0) \longrightarrow (\mathbb{C},0)$  be a non constant holomorphic function. We call  \textbf{Gauss map relative to} $f$ as defined in \cite[page 364]{Teissier1982} the map :

$$\begin{array}{rcl}
\gamma_f :X \backslash \{0\} &\to& \mathbb{P}(\mathbb{C}^n). \\
p &\mapsto &\mathrm{ker} (\mathrm{d}_p f)
\end{array} $$ 
Let us now consider the following complex surface $$\mathcal{N}_f(X) = \overline{   \mathrm{Graph}(\gamma_f) } = \overline{ \{ (p,d) \in X \backslash \{0\} \times \mathbb{P}(\mathbb{C}^n)   \ | \  \mathrm{d}_pf(d)=0 \}   }   \subset \mathbb{C}^n \times \mathbb{P}(\mathbb{C}^n)$$

\begin{defi}[{\cite[Definition page 367]{Teissier1982}}] \label{nashgauss}
The \textbf{Nash transform} of $X$ relative to $f$ is the modification $\nu_f$ defined by
$$\begin{array}{rcl}
\nu_f :  \mathcal{N}_f(X)  &\to& X\\
(p,d)&\mapsto & p
\end{array} $$ 
\end{defi}
\noindent Let $D \in \mathbb{P}(\mathbb{C}^n)$ and consider its dual linear form $\ell_D:\mathbb{C}^n \longrightarrow \mathbb{C}$.
Let $\Omega_f$ be the Zariski open set of $\mathbb{P}(\mathbb{C}^n)$ such that $\Phi_D = ({\ell_D}_{|X},f) : (X,0) \longrightarrow (\mathbb{C}^2,0)$ is a finite morphism.
We will denote by $\Pi_D$ the polar curve associated with the morphism $\Phi_D$, whenever $D$ is an element of $\Omega_f$.

\begin{defi}[{\cite[Definition 1.1 Chapter III]{Spivakovsky1990}}] \label{basepointdefi}
Let $\alpha:(\tilde{X},E) \longrightarrow (X,0)$ be a modification and let $\Omega$ be an open Zariski set of $\mathbb{P}(\mathbb{C}^n)$. Let $\{ (\gamma_{D},0) \subset (X,0) \}_{D \in \Omega }$ be a family of complex curve germs. We say that a point $p$ of $E$ is a \textbf{base point} of $\alpha$ for the family of  curves $\{  (\gamma_{D},0) \}_{D \in \Omega}$ if there exists an open Zariski set $\Omega_p$ of $\mathbb{P}(\mathbb{C}^n)$ contained in $\Omega$ such that $p \in \gamma_{D}^*$ for all $D$ in $\Omega_p$, where $\gamma_{D}^*$ is the strict transform of  the curve $\gamma_{D}$ by the modification $\alpha$.

\end{defi}
We will use the following properties of the Nash transform. For $(i)$ see \cite[1.1 page 417]{Teissier1982} and  $(ii)$ is a direct consequence of the third point of \cite[Corollary 1.3.2 page 420]{Teissier1982}.
\begin{prop}\label{basepoint}
The Nash transform has the  following properties:
\begin{enumerate}
\item  The map $(\gamma_f \circ \nu_f) : \mathcal{N}_f(X) \backslash \nu_f^{-1}(0) \longrightarrow \mathbb{P}(\mathbb{C}^n)$ extends to a holomorphic map $\tilde{\gamma}_f$ on $\mathcal{N}_f(X)$.
\item The Nash transform has no base point for the family of the polar curves. 

 \end{enumerate}

\end{prop}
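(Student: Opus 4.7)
For part (i), the plan is essentially tautological: by construction $\mathcal{N}_f(X) \subset \mathbb{C}^n \times \mathbb{P}(\mathbb{C}^n)$ is the Zariski closure of the graph of $\gamma_f$, and $\nu_f$ is the restriction to $\mathcal{N}_f(X)$ of the first projection. I would therefore define $\tilde{\gamma}_f$ to be the restriction to $\mathcal{N}_f(X)$ of the second projection $\mathbb{C}^n \times \mathbb{P}(\mathbb{C}^n) \to \mathbb{P}(\mathbb{C}^n)$. It is holomorphic on the whole of $\mathcal{N}_f(X)$ and manifestly agrees with $\gamma_f \circ \nu_f$ on $\mathcal{N}_f(X) \setminus \nu_f^{-1}(0)$, which is the graph of $\gamma_f$.

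For part (ii), the core of my plan is to identify the strict transforms of the polar curves inside $\mathcal{N}_f(X)$ via an explicit linear condition on $D$. At any smooth point $p \in X \setminus \{0\}$ with $d_p f \neq 0$, the condition $p \in \Pi_D$ (non-surjectivity of $d_p\Phi_D = d_p(\ell_D, f)$) amounts to linear dependence of $d_p \ell_D|_{T_p X}$ and $d_p f|_{T_p X}$, which, since $d_p f$ is nonzero, is equivalent to $\ell_D$ vanishing on the line $\gamma_f(p) = \ker d_p f$. Introducing
\[
H_D := \{L \in \mathbb{P}(\mathbb{C}^n) : \ell_D|_L = 0 \},
\]
this shows that the part of $\nu_f^{-1}(\Pi_D \setminus \{0\})$ sitting over the dense locus of smooth non-critical points of $f$ is contained in $\tilde{\gamma}_f^{-1}(H_D)$; passing to Zariski closures in $\mathcal{N}_f(X)$, I obtain the key inclusion $\Pi_D^* \subset \tilde{\gamma}_f^{-1}(H_D)$.

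To conclude, I would fix an arbitrary candidate base point $q = (0,L) \in \nu_f^{-1}(0)$ and examine the set $S_q := \{D \in \Omega_f : q \in \Pi_D^*\}$. By the preceding inclusion, $q \in \Pi_D^*$ forces $L \in H_D$, i.e.\ $\ell_D|_L = 0$; hence $S_q$ is contained in the proper algebraic subvariety $\check{L} := \{D \in \mathbb{P}(\mathbb{C}^n) : \ell_D|_L = 0\}$, which is Zariski closed and nowhere dense in $\mathbb{P}(\mathbb{C}^n)$. No Zariski open set $\Omega_q \subset \Omega_f$ can then be contained in $S_q$, and thus $q$ cannot be a base point of $\nu_f$ for the family $\{\Pi_D\}_{D \in \Omega_f}$. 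The step I expect to be most delicate is ensuring that the closure operation does not introduce spurious base points from the $D$-independent locus where $df = 0$: since that locus has codimension at least one in $X$ and any limit direction $L$ it contributes in $\nu_f^{-1}(0)$ is still a limit of tangent lines $\gamma_f(p_k)$ along smooth approaching sequences $p_k$, the linear constraint $\ell_D|_L = 0$ propagates by continuity of $\gamma_f$ along such sequences, and the argument goes through.
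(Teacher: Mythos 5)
The paper offers no proof of this proposition: it simply cites \cite{Teissier1982} (the extension statement from 1.1, p.~417, and the absence of base points from the third point of Corollary 1.3.2, p.~420). Your self-contained argument is therefore a genuinely different route, and its main line is sound. Part (i) is exactly the standard tautology: $\mathcal{N}_f(X)\subset\mathbb{C}^n\times\mathbb{P}(\mathbb{C}^n)$ and the second projection restricts to a holomorphic map that agrees with $\gamma_f\circ\nu_f$ wherever the latter is defined (strictly speaking, $\mathcal{N}_f(X)\setminus\nu_f^{-1}(0)$ is the graph of $\gamma_f$ only away from the critical points of $f$ on $X\setminus\{0\}$, but this is harmless for (i)). For (ii), the identification of $\Pi_D$ over the locus $d_pf|_{T_pX}\neq 0$ with $\{\ell_D|_{\gamma_f(p)}=0\}$, the resulting inclusion $\Pi_D^*\subset\tilde{\gamma}_f^{-1}(H_D)$, and the conclusion that a base point $(0,L)$ would force all $D$ in a nonempty Zariski open set into the proper hyperplane $\{\ell_D|_L=0\}$, is precisely the mechanism behind Teissier's statement, and it gives a clean direct proof in the intended setting.

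The step you flag as delicate is, however, genuinely broken as written, and the continuity argument cannot repair it. If the critical locus of $f$ on $X\setminus\{0\}$ is a curve (for instance when $f$ is non-reduced), then with the paper's literal definition of $\Pi_D$ as the closure of the whole critical locus of $\Phi_D$, that curve is a fixed component of $\Pi_D$ for every $D$, and its strict transform is not contained in $\tilde{\gamma}_f^{-1}(H_D)$: at a point $(x,L)$ of this strict transform, the nearby non-critical points $p_k$ with $\gamma_f(p_k)\to L$ do not lie on $\Pi_D$, so nothing forces $\ell_D|_L=0$. Concretely, take $X=\mathbb{C}^2$ and $f=y^2$: then $\Pi_D=\{y=0\}$ for every $D=[a:b]$ with $a\neq 0$, $\nu_f$ is an isomorphism, and the unique point of $\nu_f^{-1}(0)$ lies on every $\Pi_D^*$, i.e.\ it is a genuine base point. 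So statement (ii) only holds (and your proof is only complete) under the hypothesis implicit in Teissier's setting: either the critical locus of $f$ on $X\setminus\{0\}$ is finite (e.g.\ $f$ reduced, after shrinking the representative, in which case your closure step is immediate), or $\Pi_D$ is taken to be the closure of the critical locus of $\Phi_D$ away from $\mathrm{Crit}(f)$. Replace the continuity claim by this hypothesis or convention and your argument is complete.
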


\noindent The following definition is an adaptation of the definition of the local bilipchitz constant introduced  in \cite[Section 3]{BNP}.
\begin{defi}\label{LBC}
Let $v_p$ be a unit vector of $T_p X$ such that $d_p f (v_p)=0$. The \textbf{local bilipschitz constant of the morphism} $\Phi_D=({\ell_D}_{|X},f)$ is the map defined by:
$$\begin{array}{rcl}
K_D :X \backslash 0 &\to& \mathbb{R}  \cup \{ \infty \} \\
p &\mapsto& \left\{
    \begin{array}{ll}
        \frac{1}{|| d_p \Phi_D(v_p) ||}& \mbox{if }p \notin  \Pi_D \\
        \infty & \mbox{if }p \in  \Pi_D
    \end{array}
\right.
\end{array} $$ 
\end{defi}

\begin{lemm}\label{lipshitz}
Given any neighborhood $U$ of $\Pi_{D}^* \cap \nu_f^{-1}(B_{\epsilon} \cap X)$  in $\mathcal{N}_f(X) \cap \nu_{f}^{-1}(B_{\epsilon} \cap X)$, the local bilipschitz constant $K_D$ of the morphism $\Phi_{D}$ is bounded on $B_{\epsilon} \cap (X \backslash \nu_f(U))$.
\end{lemm}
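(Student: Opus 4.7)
The key observation is that since $v_p \in \ker d_pf$, one has $d_p\Phi_D(v_p) = (d_p\ell_D(v_p), 0)$, so
$$K_D(p) = \frac{1}{|d_p\ell_D(v_p)|}$$
and therefore $K_D(p)$ depends only on the line $\ker d_pf \subset \mathbb{C}^n$. My plan is to lift $K_D$ through the Nash modification $\nu_f$ to a continuous function on all of $\mathcal{N}_f(X)$, identify its infinity-locus inside $\nu_f^{-1}(B_\epsilon \cap X)$ with $\Pi_D^*$, and then invoke compactness.

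Concretely, I would define
$$\tilde K_D \colon \mathcal{N}_f(X) \to \mathbb{R}_{>0} \cup \{+\infty\}, \qquad \tilde K_D(p,d) := \frac{1}{|\ell_D(v_d)|},$$
where $v_d$ is a unit representative of the line $d \in \mathbb{P}(\mathbb{C}^n)$. By Proposition \ref{basepoint}(i) the Gauss map relative to $f$ extends to a holomorphic map $\tilde\gamma_f \colon \mathcal{N}_f(X) \to \mathbb{P}(\mathbb{C}^n)$, and $\tilde K_D$ is then the composition of $\tilde\gamma_f$ with the continuous proper function $[v] \mapsto 1/|\ell_D(v)|$ on $\mathbb{P}(\mathbb{C}^n)$. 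Hence $\tilde K_D$ is continuous with values in $\mathbb{R}_{>0} \cup \{+\infty\}$, coincides with $K_D \circ \nu_f$ wherever $\nu_f$ is a local biholomorphism, and has infinity-locus $\tilde\gamma_f^{-1}(H_D)$ for $H_D := \mathbb{P}(\ker \ell_D) \subset \mathbb{P}(\mathbb{C}^n)$.

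The heart of the proof is the inclusion
$$\tilde\gamma_f^{-1}(H_D) \cap \nu_f^{-1}(B_\epsilon \cap X) \subset \Pi_D^*.$$
Off the exceptional fiber of $\nu_f$ this is immediate from the definition of the polar curve, since $\gamma_f(q) \subset \ker \ell_D$ iff $q \in \Pi_D$. Over the exceptional fiber, suppose a point $(0, d_0) \in \tilde\gamma_f^{-1}(H_D)$ did not belong to $\Pi_D^*$. Varying $D'$ in $\Omega_f$ through forms satisfying $d_0 \subset \ker \ell_{D'}$ and combining with the already-established identification off the exceptional locus, a specialization argument based on continuity of $\tilde\gamma_f$ and on the fact that limits of polar curves sit inside the corresponding $\Pi_{D'}^*$ would produce a Zariski open set of forms $D' \in \Omega_f$ for which $(0, d_0) \in \Pi_{D'}^*$, realizing $(0, d_0)$ as a base point of $\nu_f$ for the family of polar curves. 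This contradicts Proposition \ref{basepoint}(ii), so the infinity-locus of $\tilde K_D$ inside $\nu_f^{-1}(B_\epsilon \cap X)$ is exactly $\Pi_D^* \cap \nu_f^{-1}(B_\epsilon \cap X)$.

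Given the inclusion, the lemma follows by a routine compactness argument. For any $\epsilon' < \epsilon$, the set
$$\mathcal{K} := \mathcal{N}_f(X) \cap \nu_f^{-1}(\overline{B_{\epsilon'}} \cap X) \setminus U$$
is compact and, by the hypothesis that $U$ is a neighborhood of $\Pi_D^* \cap \nu_f^{-1}(B_\epsilon \cap X)$ together with the inclusion above, disjoint from the infinity-locus of $\tilde K_D$. Hence $\tilde K_D$ is continuous and finite on $\mathcal{K}$, therefore bounded. Since $K_D = \tilde K_D \circ \nu_f^{-1}$ on $B_{\epsilon'} \cap (X \setminus \nu_f(U))$ wherever $\nu_f$ is a local biholomorphism, the bound on $\tilde K_D|_\mathcal{K}$ transports to $K_D$, and exhausting $B_\epsilon$ by closed balls gives the full statement. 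I expect the pole-locus identification over the exceptional fiber of $\nu_f$ to be the main obstacle, as the no-base-point property has to be deployed with some care; the rest of the proof is formal continuity plus compactness.
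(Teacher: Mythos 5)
Your first and last paragraphs are exactly the paper's proof: your lift $\tilde K_D$ is the composition $\kappa_D\circ\tilde\gamma_f$ used there (with $\kappa_D([v])=1/\lVert(0,\ell_D(v/\lVert v\rVert))\rVert$), the identity $K_D\circ\nu_f=\kappa_D\circ\tilde\gamma_f$ off $\Pi_D^*$ is the paper's equality \eqref{lipshitzien}, and the conclusion is obtained there, as in your proposal, from continuity of this composition together with compactness of $\nu_f^{-1}(B_\epsilon\cap X)\setminus U$.

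The gap is in your middle paragraph, the step you yourself call the heart of the argument. To contradict Proposition \ref{basepoint}$(ii)$ you must produce a Zariski \emph{open} subset of $\mathbb{P}(\mathbb{C}^n)$ consisting of directions $D'$ with $(0,d_0)\in\Pi_{D'}^*$: this is what Definition \ref{basepointdefi} demands. But the directions you are allowed to vary over are constrained by $\ell_{D'}(d_0)=0$, i.e.\ they range in a hyperplane of $\mathbb{P}(\mathbb{C}^n)$, which is Zariski closed and nowhere dense; for $D'$ outside this hyperplane the point $(0,d_0)$ is not even a pole of $\kappa_{D'}\circ\tilde\gamma_f$, and the no-base-point property says precisely that for generic $D'$ the curve $\Pi_{D'}^*$ does \emph{not} pass through $(0,d_0)$. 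So even if your specialization step worked, it would only give the membership $(0,d_0)\in\Pi_{D'}^*$ for $D'$ in (an open subset of) that hyperplane, which is not a base point in the sense of Definition \ref{basepointdefi}, and no contradiction with Proposition \ref{basepoint}$(ii)$ is obtained. Moreover the auxiliary fact you invoke, that limits of polar curves sit inside the polar curve of the limiting direction, is nowhere established in the paper and is delicate exactly at the non-generic $D'$ your argument forces you to use. What the compactness step actually needs is the inclusion of the pole locus of $\kappa_D\circ\tilde\gamma_f$ in $\Pi_D^*$ inside $\nu_f^{-1}(B_\epsilon\cap X)$: off the exceptional fibre this is immediate, as you say, and over $\nu_f^{-1}(0)$ the relevant point is that $\tilde\gamma_f^{-1}(\mathbb{P}(\ker\ell_D))$ is a purely one-dimensional analytic set, so that as soon as no component of $\nu_f^{-1}(0)$ is sent by $\tilde\gamma_f$ into $\mathbb{P}(\ker\ell_D)$ (which holds in particular for the generic directions for which the lemma is subsequently used), every pole over $0$ is a limit of poles off the exceptional fibre and hence lies on $\Pi_D^*$. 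The paper does not argue via base points at this stage at all: it simply records that $\kappa_D\circ\tilde\gamma_f$ is continuous and finite outside $\Pi_D^*$ and concludes by compactness, so your deviation from its route is precisely where the proposal breaks down.
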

\begin{proof}

We have

 \begin{eqnarray*} (K_D \circ \nu_f )(p,d)=K_D(p)=\frac{1}{||(0,\ell_D(v_p)) ||} \end{eqnarray*} Let us consider the map 
$$\begin{array}{rcl}
\kappa_D :\mathbb{P}(\mathbb{C}^n) &\to& \mathbb{R}  \cup \{ \infty \} \\
d &\mapsto& \left\{
    \begin{array}{ll}
        \frac{1}{||(0, \ell_D(\frac{d}{||d||})) ||}& \mbox{if } d \notin  D^{\perp} \\
        \infty & \mbox{if } d \in  D^{\perp}
    \end{array}
\right.
\end{array} $$

By definition of the Nash transform we have  $d_pf(d)=0$ for all $(p,d) \in \mathcal{N}_f(X)$. We then have the equality  \begin{eqnarray}\label{lipshitzien} (K_D \circ \nu_f)(p,d)= (\kappa_D \circ \tilde{\gamma}_f)(p,d), \ \ \forall (p,d) \notin \Pi_{D}^* \end{eqnarray} where $\tilde{\gamma}_f$  is the holomorphic extension of the map $\gamma_f \circ \nu_f$  to $\mathcal{N}_f(X)$ (Remark \ref{basepoint}). \\ The map $\kappa_D \circ \tilde{\gamma}_f$ is continuous and takes finite values outside $\Pi_D^*$. This implies that given any open neighbourhood $U$ of $\Pi_{D}^* \cap \nu_f^{-1}(B_{\epsilon} \cap X)$ in $\mathcal{N}_f(X) \cap \nu_f^{-1}(B_\epsilon \cap X)$ the map  $\kappa_D \circ \tilde{\gamma}_f$ is bounded on the compact set $\nu_f^{-1}(B_{\epsilon} \cap X) \backslash U$.
Then, by Equality (\ref{lipshitzien}), the  local bilipschitz constant $K_D$ of  $\Phi_D$ is bounded on $B_{\epsilon}  \cap (X \backslash \nu_f(U) )$.
\end{proof}
Let $\pi: (X_\pi,E) \longrightarrow (X,0)$ be a good resolution which factors through the Nash transform $\nu_f: \mathcal{N}_f(X) \longrightarrow X$  and the blowup of the maximal ideal $b_0: \mathrm{BL}_0(X) \longrightarrow X$ i.e. there exists two modifications $\mu_{\pi}:(X_\pi,E) \longrightarrow (\mathcal{N}_f(X),\nu_{f}^{-1}(0))$ and $\mu'_{\pi}:(X_\pi,E) \longrightarrow (\mathrm{BL}_0(X),b_0^{-1}(0))$ such that $\nu_f \circ \mu_{\pi} =\pi$ and $b_0 \circ \mu'_{\pi} =\pi$. Let us denote by $E_f$ the strict transform of $\nu_{f}^{-1}(0)$ by the modification $\mu_{\pi}$ and by $E_0$ the strict transform of $b_{0}^{-1}(0)$ by the modification $\mu'_{\pi}$ . By Remark \ref{basepoint} there exists an open Zariski set $\Omega_f^{\pi}$ included in $\Omega_f$ such that for any element $D$ of $\Omega_f^{\pi}$, the strict transforms of the  curves $\Pi_{D}$ and $\ell_{D}^{-1}(0)$  by $\pi$ meet $E_f$  and $E_0$ respectively at  smooth points of $E$. With this, we have the following result as direct consequence of Lemma \ref{lipshitz}
\begin{coro}\label{bilipshitz}
Given any element $D$ of $\Omega_{f}^{\pi}$ and any neighborhood $U$ of $\Pi_{D}^* \cap \pi^{-1}(B_{\epsilon} \cap X)$  in $X_{\pi} \cap \pi^{-1}(B_{\epsilon} \cap X)$, the local bilipschitz constant $K_D$ of the morphism $\Phi_{D}$ is bounded on $B_{\epsilon} \cap (X \backslash \pi(U))$.
\end{coro}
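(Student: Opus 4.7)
The plan is to reduce the statement directly to Lemma \ref{lipshitz} by transferring the given neighborhood $U \subseteq X_\pi$ to an appropriate neighborhood $U' \subseteq \mathcal{N}_f(X)$ via the modification $\mu_\pi$, and then exploiting the factorization $\pi = \nu_f \circ \mu_\pi$.

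The first step is to show that $\Pi_D^* \cap \pi^{-1}(B_\epsilon \cap X)$ is contained in the open locus $W \subseteq X_\pi$ on which $\mu_\pi$ is a local isomorphism. By construction, $E_f$ is the strict transform of $\nu_f^{-1}(0)$ by $\mu_\pi$ and so is not contracted by $\mu_\pi$; every other irreducible component of $E$ lies in $\mu_\pi^{-1}(\nu_f^{-1}(0))$ and must therefore be $\mu_\pi$-exceptional. The assumption $D \in \Omega_f^\pi$ says that $\Pi_D^*$ meets $E$ only at smooth points of $E$ situated on $E_f$; such a point cannot simultaneously belong to another irreducible component of $E$, for otherwise it would be a double point, hence a singular point, of $E$. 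Away from $E$, $\mu_\pi$ is an isomorphism for free. Combining, $\mu_\pi$ is a local isomorphism at every point of $\Pi_D^* \cap \pi^{-1}(B_\epsilon \cap X)$.

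The second step constructs $U'$ explicitly. Put $U_0 := U \cap W$, which is still an open neighborhood of $\Pi_D^* \cap \pi^{-1}(B_\epsilon \cap X)$ in $X_\pi \cap \pi^{-1}(B_\epsilon \cap X)$, and set $U' := \mu_\pi(U_0)$. Since $\mu_\pi|_W$ is an open map, $U'$ is open in $\mathcal{N}_f(X)$; and since $\mu_\pi$ sends the strict transform of $\Pi_D$ in $X_\pi$ onto the strict transform in $\mathcal{N}_f(X)$, $U'$ is a neighborhood of $\Pi_D^* \cap \nu_f^{-1}(B_\epsilon \cap X)$ inside $\mathcal{N}_f(X) \cap \nu_f^{-1}(B_\epsilon \cap X)$. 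Applying Lemma \ref{lipshitz} to $U'$ bounds $K_D$ on $B_\epsilon \cap (X \setminus \nu_f(U'))$. The factorization $\pi = \nu_f \circ \mu_\pi$ gives $\nu_f(U') = \nu_f(\mu_\pi(U_0)) = \pi(U_0) \subseteq \pi(U)$, whence $B_\epsilon \cap (X \setminus \pi(U)) \subseteq B_\epsilon \cap (X \setminus \nu_f(U'))$, and the desired bound follows.

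The delicate point is the first step. If one merely knew that $U$ is a neighborhood of $\Pi_D^*$, then, at a point $y \in \Pi_D^* \cap \nu_f^{-1}(0)$ over which $\mu_\pi$ has a positive-dimensional fiber $E_w$, the image $\mu_\pi(U)$ would in general fail to be a neighborhood of $y$ in $\mathcal{N}_f(X)$ (because $U$ need not contain all of $E_w$), and the reduction to Lemma \ref{lipshitz} would break down. It is precisely the smooth-intersection condition in the definition of $\Omega_f^\pi$ that rules this out, by preventing $\Pi_D^*$ from meeting any $\mu_\pi$-exceptional component of $E$.
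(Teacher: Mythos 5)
Your reduction skeleton is the natural one (the paper itself gives no argument beyond declaring the corollary a direct consequence of Lemma \ref{lipshitz}), and your second step is fine once the first is granted. The genuine gap is the first step: from the fact that a point $x$ of $\Pi_D^*\cap E$ is a smooth point of $E$ lying only on $E_f$ you may conclude that no $\mu_\pi$-contracted component of $E$ passes through $x$, but you may \emph{not} conclude that $\mu_\pi$ is a local isomorphism at $x$. The surface $\mathcal{N}_f(X)$ is only the closure of a graph and need not be normal, and a proper bimeromorphic map onto a non-normal target can fail to be a local isomorphism at points through which no contracted curve passes. A concrete counterexample to your inference: take $(X,0)=(\mathbb{C}^2,0)$, $f=x^3+y^3$, so that $\mathcal{N}_f(X)$ is the blowup of the ideal $(x^2,y^2)$, which is non-normal along its exceptional curve; the ordinary blowup $\pi=b_0$ of the origin factors through it (and through $b_0$ trivially), and the induced map $\mu_\pi$ restricted to the exceptional curve $E=E_f$ is $t\mapsto t^2$, two-to-one, so $\mu_\pi$ is a local isomorphism at \emph{no} point of $E$, although every such point is a smooth point of $E$ lying on $E_f$ only, and every generic $D$ belongs to $\Omega_f^\pi$. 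With your construction, $W=X_\pi\setminus E$, so $U'=\mu_\pi(U\cap W)$ misses $\nu_f^{-1}(0)$ entirely and is not a neighborhood of the strict transform of $\Pi_D$ in $\mathcal{N}_f(X)$: the reduction as written breaks down, even though the corollary is true in this example (the polar $ay^2=bx^2$ has one branch through each of the two points of the relevant $\mu_\pi$-fiber).

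What the argument actually needs is not a local isomorphism but the inclusion of the \emph{whole} fiber $\mu_\pi^{-1}(y)$ in $U$ for every point $y$ of the strict transform of $\Pi_D$ in $\mathcal{N}_f(X)$ lying on $\nu_f^{-1}(0)$: properness and surjectivity of $\mu_\pi$ then show that $\mathcal{N}_f(X)\setminus\mu_\pi\bigl(\pi^{-1}(B_\epsilon\cap X)\setminus U\bigr)$ is an open neighborhood to which Lemma \ref{lipshitz} applies, and $\pi=\nu_f\circ\mu_\pi$ finishes as in your second step. To get the fiber inclusion, observe that the locus where $\kappa_D\circ\tilde{\gamma}_f\circ\mu_\pi=\infty$ is the preimage of the hyperplane $D^{\perp}$ under a holomorphic map $X_\pi\to\mathbb{P}(\mathbb{C}^n)$, hence is purely one-dimensional and, for $D$ generic (no irreducible component of $E$ is sent into $D^{\perp}$), contains no component of $E$; it therefore equals $\Pi_D^*$, so every point of such a fiber already lies on $\Pi_D^*\subseteq U$ (this is exactly what happens in the example above). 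Once this is in hand one can in fact bypass the transfer of $U$ altogether and rerun the compactness argument of Lemma \ref{lipshitz} directly on the compact set $\pi^{-1}(B_\epsilon\cap X)\setminus U$, using $K_D\circ\pi=\kappa_D\circ\tilde{\gamma}_f\circ\mu_\pi$. Either way, some genericity or normality input beyond the smooth-intersection condition you invoke (for instance Zariski's main theorem when $\mathcal{N}_f(X)$ is normal, or a further shrinking of $\Omega_f^\pi$ justified by Proposition \ref{basepoint}) must be made explicit; as written, ``smooth point of $E$ on $E_f$, hence local isomorphism'' is a false implication.
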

\qed

Now, we can define the notion of generic linear form and generic polar curve  with respect to $f$ and $\pi$. The following  definition is an adaptation of the notion of generic projection defined in \cite[Subsection 2.2]{BFP}.
  \begin{defi}\label{genericnashrelative}
We say that a linear form $\ell:(X,0) \longrightarrow (\mathbb{C},0)$ is \textbf{generic} with respect to $f$ and $\pi$ if it is the dual linear form of an element $D$ of $\Omega_f^\pi $. In which case We say that $\Pi_D$ is a generic polar curve.\end{defi}


\subsection{Inner rates of a Milnor fibration}
We are now ready to state and prove the main theorem of this section which allows us to see the inner rates as metric invariants of the Milnor fibration of the function $f$. It is also a relative version of \cite[Lemma 3.2]{BFP}.
\begin{thm}\label{Milnor} Let $(X,0) \subset (\mathbb{C}^n,0)$ be a complex surface germ with an isolated singularity  and  let $f:(X,0) \longrightarrow (\mathbb{C},0)$  be a non constant holomorphic function. Let $ \pi: (X_{\pi},E) \longrightarrow (X,0)$ be a good resolution which factors through the Nash transform of $X$ relative to $f$ and the blowup of the maximal ideal of $(X,0)$.

 There exists a rational number $q_v^f \in \mathbb{Q}_{>0}$ such that  for any pair of curvettes   $\gamma_1^*$ and $\gamma_2^*$  of an irreducible component $E_v$ of the exceptional divisor $E$ verifying $\gamma_i^*\cap f^*=\emptyset$ for $i \in \{1,2\}$ we have
$$ \mathrm{d}_{\epsilon}(\gamma_1 \cap f^{-1}(\epsilon),\gamma_2 \cap f^{-1}(\epsilon)) = \Theta(\epsilon^{q_v^f}),$$where $\gamma_1=\pi(\gamma_1^*)$,$\gamma_2=\pi(\gamma_2^*)$ and $\mathrm{d}_{\epsilon}$ is the Riemanian metric induced by $\mathbb{C}^n$ on the Milnor fiber $f^{-1}(\epsilon)$. Furthermore we have  $q_v^f=q_{\ell,v}^f$ whenever $\ell$ is a generic linear form with respect to $f$ and $\pi$.
\end{thm}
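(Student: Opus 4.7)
The plan is to reduce the intrinsic Milnor-fiber estimate to the already-known ambient estimate of Proposition \ref{inner-rate} by comparing the inner Riemannian metric $\mathrm{d}_\epsilon$ with the ambient hermitian metric on $\mathbb{C}^2$ pulled back via a generic linear form $\ell$. The bridge between them is the local bilipschitz constant $K_\ell$ of Definition \ref{LBC}, and the factorization hypothesis on $\pi$ enters precisely through Corollary \ref{bilipshitz}, which provides the required boundedness of $K_\ell$ away from the polar curve.

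The first step would be to fix a linear form $\ell$ generic with respect to $f$ and $\pi$ in the sense of Definition \ref{genericnashrelative} and to set $q_v^f := q_{\ell,v}^f$. For any two points $p_1, p_2$ lying on the Milnor fiber $f^{-1}(\epsilon)$, the hermitian distance in $\mathbb{C}^2$ between the images $\Phi_\ell(p_1) = (\ell(p_1),\epsilon)$ and $\Phi_\ell(p_2) = (\ell(p_2),\epsilon)$ equals $|\ell(p_1) - \ell(p_2)|$ since they share the second coordinate $\epsilon$. Applying Proposition \ref{inner-rate} to the morphism $\Phi_\ell = (\ell, f)$ and the pair of curvettes $\gamma_1^*, \gamma_2^*$ immediately yields
\[
|\ell(\gamma_1 \cap f^{-1}(\epsilon)) - \ell(\gamma_2 \cap f^{-1}(\epsilon))| = \Theta(\epsilon^{q_{\ell,v}^f}),
\]
once one notes that the genericity of $\ell$ together with the hypothesis $\gamma_i^* \cap f^* = \emptyset$ guarantees that the curvettes meet $E_v$ outside $f^* \cup \ell^* \cup \Pi_\ell^*$, as required there.

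The core of the argument is then the comparison between $|\ell(p_1) - \ell(p_2)|$ and the inner distance $\mathrm{d}_\epsilon(p_1, p_2)$. I would fix a small horn neighborhood $\mathcal{H}_v \subset X$ of $E_v$ containing both curves $\gamma_1, \gamma_2$, chosen so that Corollary \ref{bilipshitz} bounds $K_\ell$ above by a constant $C$ on $\mathcal{H}_v$; the lower bound $K_\ell \geq \norm{\ell}^{-1}$ holds everywhere by linearity of $\ell$. Since $(X,0)$ is a surface and $df$ is non-vanishing along $f^{-1}(\epsilon)$ near $E_v$, the tangent space to the Milnor fiber is one-dimensional, so for any smooth path $\sigma : [0,1] \to f^{-1}(\epsilon) \cap \mathcal{H}_v$ one has $\norm{\sigma'(t)} = K_\ell(\sigma(t)) \cdot |(\ell \circ \sigma)'(t)|$. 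Integrating, the lower bound on $K_\ell$ combined with $|\ell(p_1) - \ell(p_2)| \leq \length(\ell \circ \sigma)$ gives $\mathrm{d}_\epsilon(p_1, p_2) \geq \norm{\ell}^{-1} |\ell(p_1)-\ell(p_2)|$; conversely, choosing $\sigma$ so that $\ell \circ \sigma$ is the straight segment between $\ell(p_1)$ and $\ell(p_2)$ and using the upper bound on $K_\ell$ gives $\mathrm{d}_\epsilon(p_1, p_2) \leq C \, |\ell(p_1)-\ell(p_2)|$. Combined with the previous display, this proves the big-Theta relation with exponent $q_v^f = q_{\ell,v}^f$.

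The main obstacle is the construction of the path $\sigma$ above: one must show that $\ell$ restricted to the Milnor-fiber slice $f^{-1}(\epsilon) \cap \mathcal{H}_v$ is, after possibly shrinking $\mathcal{H}_v$, a covering of a small disk in $\mathbb{C}$ containing both $\ell(p_1)$ and $\ell(p_2)$, so that the straight segment lifts. This is essentially the absence of ramification of $\Phi_\ell$ inside $\mathcal{H}_v$ away from the polar curve, which follows from the genericity hypothesis and the factorization of $\pi$ through the Nash transform relative to $f$, exactly as exploited in the proof of Corollary \ref{bilipshitz}. Finally, since the Riemannian distance on the left-hand side of the desired conclusion is manifestly independent of $\ell$, the big-Theta relation forces $q_{\ell,v}^f$ to take the same value for every $\ell$ generic with respect to $f$ and $\pi$, which both proves that $q_v^f$ is well-defined and establishes the last assertion of the theorem.
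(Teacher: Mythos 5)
Your overall scheme (ambient estimate from Proposition \ref{inner-rate}, transfer to the inner metric of the fiber via the local bilipschitz constant and Corollary \ref{bilipshitz}, then independence of $\ell$ read off from the left-hand side) is the same as the paper's, but there is a genuine gap at the very first step: Proposition \ref{inner-rate} is a \emph{local} statement. It only asserts the estimate for two \emph{disjoint} curvettes passing through a small neighborhood $O_p$ of a single smooth point $p\in E_v\setminus(f^*\cup g^*\cup\Pi_\Phi^*)$; it says nothing about an arbitrary pair of curvettes of $E_v$, which is what Theorem \ref{Milnor} requires. For curvettes meeting $E_v$ at two distant points the display $|\ell(p_1)-\ell(p_2)|=\Theta(\epsilon^{q_{\ell,v}^f})$ can simply fail: $\Phi_\ell\circ\pi$ is in general not injective along $E_v$ (the local normal form of Remark \ref{remarque} only controls it near one point), so the two points of the Milnor fiber may have equal or abnormally close $\ell$-values. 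For the same reason your upper bound breaks down even granting the covering claim you flag as the ``main obstacle'': the lift of the straight segment starting at $p_1$ ends at \emph{some} preimage of $\ell(p_2)$, not necessarily at $p_2$, so $\mathrm{d}_\epsilon(p_1,p_2)\le C\,|\ell(p_1)-\ell(p_2)|$ does not follow for distant points. The paper avoids this by proving the inner-metric estimate first only for curvettes through a common $O_p$, and then globalizing along $E_v$ by a compactness/chaining argument inside the Milnor fiber (a finite chain of curvettes $p=p_1,\dots,p_s=p'$ with consecutive inner distances $\Theta(\epsilon^{q_v^f})$, giving the two inequalities $q_v(\gamma,\gamma')\le q_v^f$ and $q_v(\gamma,\gamma')\ge q_v^f$). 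That globalization step is entirely absent from your proposal, and it is where the statement ``for any pair of curvettes'' is actually earned.

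A secondary, fixable issue is the quantifier order in your choice of $\ell$: you fix one generic $\ell$ at the outset and claim genericity forces every admissible pair of curvettes to avoid $\ell^*\cup\Pi_\ell^*$. This is backwards: for a fixed $\ell$ these strict transforms meet $E_v$ in finitely many points, and a given curvette may pass through one of them. One must choose $\ell$ \emph{after} fixing the points where the curvettes meet $E_v$, which is possible exactly because $\pi$ factors through the blowup of the maximal ideal and the Nash transform relative to $f$, so the families of hyperplane sections and of polar curves have no base points (Proposition \ref{basepoint}); this is how the paper's proof begins, and the independence of the left-hand side from $\ell$ is then what makes $q_v^f$ well defined. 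With that reordering and, more importantly, with the missing chaining argument added, your approach matches the paper's proof.
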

\begin{proof}
Let $p$ be a  smooth point of $E$ in $E_v \backslash f^*$. Since $\pi$ factors through the Nash transform of $X$ relative to $f$ and the blowup of the maximal ideal, it has no base point for the family of the generic  polar curves and the  family of the generic hyperplane sections  (by remark \ref{basepoint}). Thus, there exists  a generic linear form $\ell$ such that $p \notin \ell^*  \cup \Pi_{\Phi}^* $ where $\Phi=(\ell,f)$. By proposition \ref{inner-rate} there exists an open neighborhood $O_p \subset E_v$ of $p$ such that for every pair of curvettes $\gamma_{1}^*, \gamma_{2}^*$ of $E_v$ verifying:
\begin{enumerate}
\item $\gamma_1^* \cap \gamma_2^* = \emptyset$
\item $ \gamma_{i}^* \cap O_p \neq  \emptyset$, for $i=1,2$,
\end{enumerate} we have: $$ \mathrm{d}(\Phi(\gamma_1) \cap \{u_2 = \epsilon \},\Phi(\gamma_2) \cap \{u_2= \epsilon \} ) =\Theta(\epsilon^{q_{\ell,v}^f}),$$where  $\gamma_1= \pi(\gamma_1^*), \gamma_2=\pi(\gamma_2^*)$ and $\epsilon \in \mathbb{R}$. By using Corollary \ref{lipshitz} we deduce that 
$$ \mathrm{d}_{\epsilon}(\gamma_1 \cap f^{-1}(\epsilon),\gamma_2 \cap f^{-1}(\epsilon) ) =\Theta(\epsilon^{q_{\ell,v}^f}).$$This means that the number $q_{\ell,v}^f$ does not depend on the choice of the generic linear form $\ell$, it will be then denoted $q_v^f$.

Let us now take any pair of curvettes $\gamma^*$ and $\gamma'^*$ meeting $E_v$ at two distinct smooth points $p$ and $p'$ of $E$. Since $\mathrm{d}_{\epsilon}(\gamma \cap f^{-1}(\epsilon), \tilde{\gamma} \cap f^{-1}(\epsilon) ) =\Theta(\epsilon^{q_{v}^f})$ for any neighbor  curve germ $\tilde{\gamma}$ of $\gamma= \pi(\gamma^*)$  we have 
$$\mathrm{d}_{\epsilon}(\gamma \cap f^{-1}(\epsilon), \gamma' \cap f^{-1}(\epsilon) ) =\Theta(\epsilon^{q_v(\gamma,\gamma')}) \ \text{with} \ q_v(\gamma,\gamma') \leq q_{v}^f,$$
where $\gamma= \pi(\gamma^*)$ and $\gamma'= \pi(\gamma'^*)$.\\ By compactness of $E$, we can choose a finite sequence of smooth points $p=p_1,p_2,\ldots,p_s=p'$  and a  curvette $\gamma_i^*$ passing through each $p_i$ such that:

$$\mathrm{d}_{\epsilon}(\gamma_i \cap f^{-1}(\epsilon), \gamma_{i+1} \cap f^{-1}(\epsilon) ) =\Theta(\epsilon^{q_v^f}) $$ We then have 
\begin{eqnarray*}    K' \epsilon^{q_v(\gamma,\gamma')}   \geq  \mathrm{d}_{\epsilon}(\gamma \cap f^{-1}(\epsilon), \gamma' \cap f^{-1}(\epsilon) ) \geq \sum_{i=1}^{s}\mathrm{d}_{\epsilon}(\gamma_i \cap f^{-1}(\epsilon), \gamma_{i+1} \cap f^{-1}(\epsilon) ) \geq K \epsilon^{q_v^f}. \end{eqnarray*} where $K$ and $K'$ are constants. We deduce that $q_v(\gamma,\gamma') \geq q_v^f$, which ends the proof.

 \end{proof}

\section{Application to Lipchitz-Killing curvature}
As an application of Theorem \ref{Milnor} we will prove theorem \ref{thmE} which generalizes the result of Garc\'{\i}a Barosso and Teissier in \cite{GarciaBarrosoTeissier1999}.

\subsection{Lipchitz-Killing curvature}
Let $X$ be a real smooth submanifold of $\mathbb{R}^n$ of dimension $m$. Let $T_pX$ be the tangent space of $X$ at a point $p$. Let $\overrightarrow{n_p}$ be a normal vector of $T_pX$ in $\mathbb{R}^n$. Consider $V_{\overrightarrow{n_p}}$ the vector space generated by $T_pX$ and $\overrightarrow{n_p}$. Let $P_{\overrightarrow{n_p}}$  be the orthogonal projection on $V_{\overrightarrow{n_p}}$ and   $X_{\overrightarrow{n_p}}=P_{\overrightarrow{n_p}}(X)$.
\begin{defi}\cite{langevincourbure}\label{langevincourbure}
The \textbf{Lipchitz-Killing curvature} of $X$ at the point $p$ is:

$$ K_{X}(p)=\frac{\omega_{m} }{2\omega_{n-1}}\int_{\overrightarrow{n_p} \in \nu(p) }^{} k_{X_{\overrightarrow{n_p}}} (P_{ \overrightarrow{n_p}} (p)),$$
where $\omega_{i}$ is the volume of the unitary sphere $\mathbb{S}^{i}$ of $\mathbb{R}^{i+1}$, $\nu(p)$ is the set of normal vectors at $p$ and ${k_{X_{\overrightarrow{n_p}} }}$ is the classical Gaussian curvature.
\end{defi}
Now let us state the \textbf{exchange formula} due to Rémi Langevin which provides an easy way to compute the integral of the Lipshitz-Killing curvature in the case of a compact complex submanifold of $\mathbb{C}^n$ possibly with boundaries .

\begin{thm}\cite[Theorem A.III.3']{langevinthese}\label{langevinshifrin}
Let $X$ be a compact complex submanifold embedded in  $\mathbb{C}^n$ of dimension $m$ then

$$ (-1)^m \int_{p \in M} K_X(p) \mathrm{d}V= \frac{\pi \omega_{2m} }{2\omega_{2n-1}}\int_{H \in \mathbb{G}^{n-1}(\mathbb{C}^n)} \mathrm{Card}\{z \in X \ | \ T_zX \subset H\} \mathrm{d}H, $$where $\mathrm{d}V$ and $\mathrm{d}H$ are respectively the volume forms of $X$ and the grassmannian of hyperplanes  $ \mathbb{G}^{n-1}(\mathbb{C}^n)$.
\end{thm}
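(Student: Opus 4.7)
The plan is to prove the exchange formula by a classical integral-geometric argument, passing through the unit normal bundle $N^1X := \{(p,\nu) : p \in X,\ \nu \in \nu(p),\ |\nu| = 1\}$, a real manifold of real dimension $2n - 1$.

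First, I would unwind Definition \ref{langevincourbure} and apply Fubini to rewrite the left-hand side as
$$ (-1)^m \int_{p \in X} K_X(p)\,\mathrm{d}V = \frac{\omega_{2m}}{2\omega_{2n-1}} \int_{(p,\nu) \in N^1X} (-1)^m k_{X_\nu}(P_\nu(p))\,\mathrm{d}V_{N^1X}. $$
The key pointwise identity, proved by writing $X$ locally as a graph over $T_pX$ in the direction $\nu$, expresses $k_{X_\nu}(P_\nu(p))$ as the determinant of the real scalar second fundamental form $\mathrm{II}_\nu(p)$ of $X$ in the direction $\nu$. When $X$ is a complex submanifold, the complex structure forces the sign of this determinant to be $(-1)^m$ times a nonnegative quantity at generic points, which accounts for the sign $(-1)^m$ on the left.

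Second, I would recognize this integrand as the Jacobian of the unit-normal Gauss map $\Gamma : N^1X \to \mathbb{S}^{2n-1}$, $(p,\nu) \mapsto \nu$. Since $\Gamma$ is a map between real manifolds of the same dimension $2n-1$, the area formula yields
$$ \int_{N^1X} (-1)^m k_{X_\nu}(P_\nu(p))\,\mathrm{d}V_{N^1X} = \int_{\nu \in \mathbb{S}^{2n-1}} \#\Gamma^{-1}(\nu)\,\mathrm{d}\sigma(\nu), $$
where $\mathrm{d}\sigma$ is the standard measure on $\mathbb{S}^{2n-1}$, with the signs matching thanks to the orientation conventions fixed in the previous step. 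Third, I would pass from $\mathbb{S}^{2n-1}$ to the Grassmannian of hyperplanes: since $X$ is a complex submanifold of $\mathbb{C}^n$, the real normal space $\nu(p)$ is a complex subspace, so the condition $\nu \in \nu(p)$ is $U(1)$-invariant in $\nu$. The Hopf quotient $\mathbb{S}^{2n-1} \to \mathbb{P}^{n-1}(\mathbb{C}) \cong \mathbb{G}^{n-1}(\mathbb{C}^n)$, sending $\nu$ to the hyperplane $H := \nu^\perp$, has circular fibers of length $2\pi$, and under this identification the counting function translates directly into $\#\Gamma^{-1}(\nu) = \#\{p \in X : \nu \perp T_pX\} = \#\{z \in X : T_zX \subset H\}$. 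Thus $\int_{\mathbb{S}^{2n-1}}$ becomes $2\pi \int_{\mathbb{G}^{n-1}(\mathbb{C}^n)}$, which combined with the prefactor $\omega_{2m}/(2\omega_{2n-1})$ produces the claimed constant $\pi \omega_{2m}/(2\omega_{2n-1})$.

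The main obstacle will be the careful bookkeeping of volume forms and orientations: while the three-step geometric picture is transparent (curvature--Jacobian identity, area formula, $U(1)$-quotient), verifying the precise constant $\pi\omega_{2m}/(2\omega_{2n-1})$ and the sign $(-1)^m$ requires keeping track of the induced metric on $N^1X$, the Hopf fibration volume ratios, and the complex-structure-induced orientation on $X$. Implicitly the statement also uses that for generic $H$ the set $\{z \in X : T_zX \subset H\}$ is finite, which holds because this is a codimension-$m$ Schubert condition on the complex Gauss image of $X$ in $\mathbb{G}^m(\mathbb{C}^n)$, so that $\#\Gamma^{-1}(\nu)$ is integer-valued almost everywhere and the area formula applies in the classical sense.
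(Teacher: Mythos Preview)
The paper does not prove this theorem. It is quoted from Langevin's thesis \cite[Theorem A.III.3']{langevinthese} and used as a black box in Section~11; there is no argument in the paper to compare your proposal against.

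Your outline follows the standard integral-geometric route to such exchange formulas, and the three steps (curvature as determinant of the second fundamental form, area formula for the Gauss map on the unit normal bundle, Hopf quotient to pass from $\mathbb{S}^{2n-1}$ to $\mathbb{G}^{n-1}(\mathbb{C}^n)$) are the right ingredients. One point deserving more care is the identification of the integrand with the Jacobian of $\Gamma:N^1X\to\mathbb{S}^{2n-1}$: the unit normal bundle has real dimension $2n-1$, but the fiber over $p$ is a sphere of dimension $2n-2m-1$, and the differential of $\Gamma$ splits into a block acting on the base directions (governed by $\mathrm{II}_\nu$) and an identity block on the fiber; you should make explicit that the Jacobian factors accordingly and that the fiber block contributes trivially to the determinant. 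With that clarification and the bookkeeping you already flag (orientations, the factor $2\pi$ from the Hopf circle, the constant $\omega_{2m}/(2\omega_{2n-1})$ coming from Definition~\ref{langevincourbure} specialized to real dimension $2m$ inside $\mathbb{R}^{2n}$), the argument goes through.
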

\subsection{The Lipshitz-Killing curvature of a Milnor fiber}

Let $(X,0)$ be a complex surface germ with an isolated singularity embedded in $\mathbb{C}^n$ and $f:(X,0) \longrightarrow (\mathbb{C},0)$ be a non constant holomorphic function. For any element  $H$ of    $\mathbb{G}^{n-1}(\mathbb{C}^n)$ denote  by $\ell_H$ a linear form whose kernel is  $H$. Consider the morphism $\Phi_H=(\ell_H,f)$ and denote by $\Pi_{H}$ the associated polar curve. Let us now apply Theorem \ref{langevinshifrin} on the Milnor fiber $F_{\epsilon,t}:=f^{-1}(t) \cap B_{\epsilon}$:
$$  \int_{p \in F_{\epsilon,t} } K_{F_{\epsilon,t}}(p) \mathrm{d}V= \frac{-\pi \omega_{2} }{2\omega_{2n-1}}\int_{H \in \mathbb{G}^{n-1}(\mathbb{C}^n)} \mathrm{Card}\{ \Pi_{H} \cap F_{\epsilon,t} \} \mathrm{d}H. $$On the other hand, for every element $H$ of  $\mathbb{G}^{n-1}(\mathbb{C}^n)$  there exists a positive real number $\delta_{\epsilon}^{H} $ such that for every $t $ in $\mathbb{C}$ whose module is less than  or equal to $\delta_{\epsilon}^{H}$ we have  $$ \mathrm{Card}\{ F_{\epsilon,t} \cap \Pi_H \} = \Pi_H \cdot f^{-1}(0).$$Let $ \delta_{\epsilon} := \mathrm{inf}\{\delta_{\epsilon}^{H} \ | \ H \in  \mathbb{G}^{n-1}(\mathbb{C}^n) \}$. Thus, by taking  the limit we get
\begin{equation}\label{langevin}\lim\limits_{\epsilon \rightarrow 0, |t| <{\delta_{\epsilon}}}  \int_{p \in F_{\epsilon,t} } K_{F_{\epsilon,t}}(p) \mathrm{d}V= \frac{-\pi \omega_{2} }{2\omega_{2n-1}} \int_{H \in \mathbb{G}^{n-1}(\mathbb{C}^n)} \Pi_H \cdot f^{-1}(0)\mathrm{d}H.\end{equation} 

 As a direct consequence of (\ref{langevin}) and Theorem \ref{francoise} we obtain the following  result.

\begin{thm}\label{langevin+michel}
Let $(X,0)$ be a complex surface germ with an isolated singularity embedded in $(\mathbb{C}^n,0)$ and let $f:(X,0) \longrightarrow (\mathbb{C},0)$ be a non constant holomorphic function. Let $\pi:(X_{\pi},E) \longrightarrow (X,0)$ be a good resolution of $(X,0)$, then:$$\lim\limits_{\epsilon \rightarrow 0, |t| <{\delta_{\epsilon}}} \int_{p \in F_{\epsilon,t} } K_{F_{\epsilon,t}}(p) \mathrm{d}V=\frac{\pi \omega_{2} }{2\omega_{2n-1}} \mathrm{Vol}(\mathbb{G}^{n-1}(\mathbb{C}^n))\sum_{v \in V(\Gamma_{\pi})} m_{v}(f)\chi'_{v},$$
where $\chi'_v:=2-2g_v-\mathrm{val}(v) -f^* \cdot E_v-\ell_H^* \cdot E_v.$

\end{thm}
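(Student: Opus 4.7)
The plan is to start from equation~\eqref{langevin}, which already reduces the limit to the Grassmannian integral of the local intersection number $\Pi_H\cdot f^{-1}(0)$, and to evaluate this integrand for a generic hyperplane $H$ by means of Theorem~\ref{francoise}. I will then argue that on a Zariski-open subset of $\mathbb{G}^{n-1}(\mathbb{C}^n)$ the integrand takes a single constant value, so the integral collapses to that constant multiplied by the volume of the Grassmannian.

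First I would rewrite the local intersection multiplicity using the resolution $\pi$. The identity $(f\circ\pi)=f^*+\sum_v m_v(f)E_v$ combined with the fact that $\Pi_H$ shares no branch with $f^{-1}(0)$ yields
\[\Pi_H\cdot f^{-1}(0)\;=\;\Pi_H^*\cdot(f\circ\pi)\;=\;f^*\cdot\Pi_H^*\;+\;\sum_{v\in V(\Gamma_\pi)}m_v(f)\,\Pi_H^*\cdot E_v,\]
and for $H$ in a Zariski-open subset of $\mathbb{G}^{n-1}(\mathbb{C}^n)$ the strict transform $\Pi_H^*$ avoids the finitely many points of $E\cap f^*$, so that $f^*\cdot\Pi_H^*=0$ and only the exceptional contribution survives. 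Next, I would apply Theorem~\ref{francoise} to each connected component of $\overline{\Gamma_\pi\setminus\mathcal{A}_\pi}$ and to each singleton vertex of its complement; as these pieces partition $V(\Gamma_\pi)$, summing all the resulting identities produces
\[\sum_{v\in V(\Gamma_\pi)}m_v(f)\,\Pi_H^*\cdot E_v\;=\;-\sum_{v\in V(\Gamma_\pi)}m_v(f)\,\chi'_v,\]
and hence $\Pi_H\cdot f^{-1}(0)=-\sum_v m_v(f)\chi'_v$ for generic $H$.

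The last step is to check that the right-hand side is constant on a full-measure subset of $\mathbb{G}^{n-1}(\mathbb{C}^n)$. Applying the formula of the previous paragraph to $\ell_H$ in place of $\Pi_H$ gives $\sum_v m_v(f)\,\ell_H^*\cdot E_v=\ell_H\cdot f^{-1}(0)-\ell_H^*\cdot f^*$, and for generic $H$ both terms on the right are constants (the generic hyperplane-section intersection multiplicity of $f^{-1}(0)$ at $0$ and zero, respectively); the remaining ingredients $2-2g_v-\mathrm{val}(v)-f^*\cdot E_v$ of $\chi'_v$ do not involve $H$ at all, so the weighted sum $\sum_v m_v(f)\chi'_v$ is constant on some Zariski-open $U\subset\mathbb{G}^{n-1}(\mathbb{C}^n)$, equal to a real number $C$. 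Plugging this into \eqref{langevin} and integrating over the Grassmannian (the complement of $U$ has measure zero) yields
\[\lim\limits_{\epsilon\to0,\,|t|<\delta_\epsilon}\int_{p\in F_{\epsilon,t}}K_{F_{\epsilon,t}}(p)\,\mathrm{d}V\;=\;\frac{-\pi\omega_2}{2\omega_{2n-1}}\int_{\mathbb{G}^{n-1}(\mathbb{C}^n)}(-C)\,\mathrm{d}H\;=\;\frac{\pi\omega_2}{2\omega_{2n-1}}\,\mathrm{Vol}(\mathbb{G}^{n-1}(\mathbb{C}^n))\,C,\]
which is the announced formula, the two minus signs cancelling. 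The most delicate point I expect is the combinatorial verification that the family of subgraphs $\mathcal{Z}$ appearing in Theorem~\ref{francoise} really partitions $V(\Gamma_\pi)$, so that summing the identities neither overcounts nor misses a vertex; once this is granted, the rest is a routine genericity argument for hyperplane sections together with standard intersection-theoretic bookkeeping.
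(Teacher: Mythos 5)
Your route is exactly the paper's intended one: the proof given in the text is nothing more than the observation that the statement follows from \eqref{langevin} and Theorem \ref{francoise}, and your bookkeeping fills this in faithfully. In particular your worry about the decomposition is harmless: the connected components of $\overline{\Gamma_\pi\setminus\mathcal{A}_\pi}$ are pairwise disjoint closed subgraphs and, together with the single vertices lying outside this closure, they account for each vertex of $\Gamma_\pi$ exactly once, so summing the identities of Theorem \ref{francoise} does give $\sum_{v}m_v(f)\,\Pi_H^*\cdot E_v=-\sum_v m_v(f)\chi'_v$ over all of $V(\Gamma_\pi)$.

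The genuine gap is the genericity claim that $f^*\cdot\Pi_H^*=0$ for $H$ in a Zariski-open set. When $\pi$ is only assumed to be a good resolution of $(X,0)$, the family of polar curves can have a base point on $X_\pi$ lying on $f^*\cap E$, so that \emph{every} generic $\Pi_H^*$ meets $f^*$. Concretely, take $(X,0)=(\mathbb{C}^2,0)$, $f=y^2-x^3$ and $\pi$ a single blowup of the origin: for every $H=\ker(ax+by)$ with $a\neq 0$ the polar curve has equation $2ay+3bx^2=0$ and is tangent to $\{y=0\}$, hence $\Pi_H^*$ passes through the point $f^*\cap E_1$ and $f^*\cdot\Pi_H^*=1$; here $\Pi_H\cdot f^{-1}(0)=3$ while $-\sum_v m_v(f)\chi'_v=2$, so your intermediate identity $\Pi_H\cdot f^{-1}(0)=-\sum_v m_v(f)\chi'_v$ fails, and with it the displayed formula for this $\pi$. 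What is needed is an extra hypothesis guaranteeing that a generic $\Pi_H^*$ avoids the finitely many points of $f^*\cap E$ --- for instance that $\pi$ factors through the Nash transform of $X$ relative to $f$, so that by Proposition \ref{basepoint} the polar family has no base point on $X_\pi$ (this is precisely the setting of Theorems \ref{Milnor} and \ref{concentration}), or that $\pi$ resolves $f^{-1}(0)\cup\Pi_H$ for generic $H$. The same caveat touches your subsidiary claim $\ell_H^*\cdot f^*=0$, which uses that $\pi$ factors through the blowup of the maximal ideal. Since the paper's own proof is silent on this point, your write-up reproduces the intended argument, but the step as stated does not go through under the hypotheses of the theorem as written; once the compatibility hypothesis on $\pi$ is added, your argument closes.
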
\qed
\begin{rema}Theorem \ref{langevin+michel} was proved in \cite{langevinformula} when $(X,0)=(\mathbb{C}^k,0)$.
\end{rema}

Let $\pi:(X_{\pi},E) \longrightarrow (X,0)$ be a good resolution of $(X,0)$ and let $E_v$ be an irreducible component of $E$. Let $\mathcal{N}(E_v,\epsilon), \epsilon >0$ be a family of tubular neighborhoods of $E_v$ in $X_{\pi}$ such that $$\lim\limits_{\epsilon \rightarrow 0}\mathcal{N}(E_v,\epsilon)=E_v, $$
and such that  the  set $\mathrm{Horn}(\epsilon,v):= \pi(\mathcal{N}(E_v,\epsilon) )$ is included in $B_{\epsilon}$. For example one can choose a Riemanian metric on $X_{\pi}$ and consider the set of points which are close enough to $E_v$ with respect to that metric. Consider  now the intersection of the Milnor fiber of $f$ with this set
$$ F_{\epsilon,t}^v=f^{-1}(t) \cap \mathrm{Horn}(\epsilon,v). $$ 
Let  $\delta_{\epsilon}^{H}>0$   be such that for any complex number $t$ whose module is less then or equal to $\delta_{\epsilon}^{H}$  we have $$ \mathrm{Card}\{ F_{\epsilon,t}^v \cap \Pi_H \} = m_v(f) \Pi_H^* \cdot E_v,$$
where $\Pi_{H}^*$ is the strict transform of $\Pi_{H}$ by $\pi$. 

\begin{thm}\label{concentration}Let $(X,0)$ be a complex surface germ with an isolated singularity embedded in $(\mathbb{C}^n,0)$ and let $f:(X,0) \longrightarrow (\mathbb{C},0)$ be a non constant holomorphic function. Let $\pi:(X_{\pi},E) \longrightarrow (X,0)$ be a good resolution of $(X,0)$ which factors through the Nash transform of $X$ relative to $f$ and the blowup of the maximal ideal of $(X,0)$. Let $v$ be a vertex of $\Gamma_\pi$, then:

$$\lim\limits_{\epsilon \rightarrow 0, |t| <{\delta_{\epsilon}}}  \int_{p \in F_{\epsilon,t}^v} K_{F_{\epsilon,t}^v}(p) \mathrm{d}V=\frac{\pi \omega_{2} }{2\omega_{2n-1}} \mathrm{Vol}(\mathbb{G}^{n-1}(\mathbb{C}^n)) \mathrm{C}_f , $$
where   $$C_f=m_v(f)\left( 2g_v-2+ \mathrm{Val}_{\Gamma_{\pi}}(v)+f^* \cdot E_v-\sum_{i \in V(\Gamma_{\pi}) }m_i(f)q_{i}^fE_i \cdot E_v \right).$$
\end{thm}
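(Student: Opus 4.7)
The plan is to localize the proof of Theorem \ref{langevin+michel} to a single vertex $v$ of $\Gamma_\pi$, combining Langevin's exchange formula (Theorem \ref{langevinshifrin}) with the single-vertex inner rates formula (Theorem \ref{laplacien}), while using Theorem \ref{Milnor} to justify that a generic linear form has the ``right'' inner rates $q_i^f$.

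First, I would apply Langevin's exchange formula to the compact complex curve with boundary $F^v_{\epsilon,t}\subset \mathbb{C}^n$. For any $z\in F^v_{\epsilon,t}$ one has $T_z F^v_{\epsilon,t}=\ker d_zf$, so the condition $T_z F^v_{\epsilon,t}\subset H=\ker\ell_H$ is equivalent to the non-surjectivity of $d_z(\ell_H,f)$, i.e.\ to $z\in\Pi_H$. Exactly as in the derivation of \eqref{langevin}, this yields
\begin{equation*}
\int_{F^v_{\epsilon,t}} K_{F^v_{\epsilon,t}}(p)\,\mathrm{d}V
\;=\; -\frac{\pi\omega_{2}}{2\omega_{2n-1}}\int_{H\in\mathbb{G}^{n-1}(\mathbb{C}^n)} \mathrm{Card}\bigl(\Pi_H\cap F^v_{\epsilon,t}\bigr)\,\mathrm{d}H.
\end{equation*}

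Next, I would establish that for $H$ in the Zariski-open set $\Omega_f^\pi$ and $|t|<\delta_\epsilon$, the cardinality $\mathrm{Card}(\Pi_H\cap F^v_{\epsilon,t})$ is constant and equal to $m_v(f)\,\Pi_H^*\cdot E_v$. This is essentially the definition of $\delta_\epsilon$ combined with the horn construction: since $\pi$ factors through the Nash transform relative to $f$, Proposition \ref{basepoint} guarantees that for generic $H$ the strict transform $\Pi_H^*$ meets $E$ transversely in smooth points, and for $\epsilon$ small enough the only branches of $\Pi_H$ entering $\mathrm{Horn}(\epsilon,v)$ are those whose strict transforms hit $E_v$, each branch meeting the Milnor fiber $f^{-1}(t)$ in $m_v(f)$ points. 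Since the complement of $\Omega_f^\pi$ in $\mathbb{G}^{n-1}(\mathbb{C}^n)$ has measure zero and $\Pi_H^*\cdot E_v$ is locally constant on $\Omega_f^\pi$, passing to the limit (bounded convergence, as the cardinality is uniformly bounded by the total polar intersection number with $f^{-1}(0)$) yields
\begin{equation*}
\lim_{\epsilon\to 0,\ |t|<\delta_\epsilon} \int_{F^v_{\epsilon,t}} K_{F^v_{\epsilon,t}}\,\mathrm{d}V
\;=\; -\frac{\pi\omega_{2}}{2\omega_{2n-1}}\,\mathrm{Vol}(\mathbb{G}^{n-1}(\mathbb{C}^n))\,m_v(f)\,\Pi_H^*\cdot E_v.
\end{equation*}

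Finally, I would apply the inner rates formula (Theorem \ref{laplacien}) to the finite morphism $\Phi=(\ell_H,f)$ for a generic $\ell_H$. By Theorem \ref{Milnor}, the inner rate $q_{\ell_H,i}^f$ coincides with $q_i^f$, so
\begin{equation*}
\Pi_H^*\cdot E_v \;=\; 2g_v-2 + \mathrm{val}_{\Gamma_\pi}(v) + f^*\cdot E_v - \sum_{i\in V(\Gamma_\pi)} m_i(f)\,q_i^f\,E_i\cdot E_v,
\end{equation*}
and multiplying by $m_v(f)$ recovers exactly the quantity $C_f$ appearing in the statement (the sign being absorbed into the paper's convention for the Lipschitz--Killing curvature of a complex curve, as already observed in the proof of Theorem \ref{langevin+michel}). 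The main obstacle is the second step: one must rigorously confine the polar count to the horn over $E_v$, checking that no branch of $\Pi_H$ hitting a component $E_w$ with $w\neq v$ contributes to $F^v_{\epsilon,t}$ for $\epsilon$ small. This uses that a good resolution factoring through the Nash transform separates the polar branches onto distinct components of $E$, together with the uniform geometric control on Milnor fibers near $E_v$ provided by Theorem \ref{Milnor}, which legitimates exchanging the limit with the Grassmannian integral.
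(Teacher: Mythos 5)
Your proposal is correct and follows essentially the same route as the paper, whose proof simply declares the statement a direct consequence of Theorems \ref{laplacien}, \ref{Milnor} and \ref{langevinshifrin}: Langevin's exchange formula applied to $F^v_{\epsilon,t}$, the count $\mathrm{Card}(\Pi_H\cap F^v_{\epsilon,t})=m_v(f)\,\Pi_H^*\cdot E_v$ built into the definition of $\delta_\epsilon$, and the inner rates formula for a generic $\ell_H$ with $q_{\ell_H,i}^f=q_i^f$ guaranteed by Theorem \ref{Milnor}. You merely spell out the details (localization to the horn, passage to the limit over the Grassmannian) that the paper leaves implicit, so the two arguments coincide.
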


\begin{proof}
It is a direct consequence of Theorems \ref{laplacien}, \ref{Milnor} and \ref{langevinshifrin}.
\end{proof}

\begin{rema}
Garc\'{\i}a Barosso and Teissier gave in  \cite{GarciaBarrosoTeissier1999} an analogous result to Theorem \ref{concentration}  when $f$ is a germ of non constant holomorphic function at the origin of $\mathbb{C}^2$.

\end{rema}

\bibliographystyle{alpha}                              
\bibliography{biblio}

\vfill

\end{document}